\DeclareMathOperator{\Betw}{Betw}
\DeclareMathOperator{\Cycl}{Cycl}
\DeclareMathOperator{\Sep}{Sep}
\DeclareMathOperator{\dll}{dual-ll}
\renewcommand{\ll}{\textup{\textup{ll}}}
\DeclareMathOperator{\lex}{lex}
\DeclareMathOperator{\mx}{mx}
\DeclareMathOperator{\mi}{mi}
\DeclareMathOperator{\mix}{mix}
\DeclareMathOperator{\pp}{pp}
\DeclareMathOperator{\dpp}{dual-pp}
\DeclareMathOperator{\Aut}{Aut}
\DeclareMathOperator{\End}{End}
\DeclareMathOperator{\Th}{Th}
\DeclareMathOperator{\Inv}{Inv}
\DeclareMathOperator{\Pol}{Pol}
\DeclareMathOperator{\GF}{GF}
\newcommand{\CSP}{\mathrm{CSP}}
\newcommand{\ceq}{\coloneqq}
\newcommand{\J}{R^{\mix}}
\newcommand{\Jtext}{Rmix}
\newcommand{\AND}{\wedge}
\newcommand{\OR}{\vee}
\newcommand{\Q}{\mathbb{Q}}
\newcommand{\N}{\mathbb{N}}
\newcommand{\Z}{\mathbb{Z}}
\newcommand{\fA}{\mathfrak{A}}
\newcommand{\fB}{\mathfrak{B}}
\newcommand{\fM}{\mathfrak{M}}
\newcommand{\cF}{\mathcal{F}}
\newcommand{\cK}{{\mathcal{K}}}
\newcommand{\set}[1]{\!\left\{ #1 \right\}}
\newcommand{\Fresse}{Fra\"{i}ss\'{e}}
\newlength{\myl}
\newcommand\cxmarker[3] { %{x}{y}{label}
  \draw (#1, #2+0.1) -- (#1, #2-0.1);
  \node at (#1, #2 - 0.4){#3};
}
\newcommand\cymarker[3] { %{x}{y}{label}
  \draw (#1+0.1, #2) -- (#1-0.1, #2);
  \node at (#1-0.4, #2){#3};
}
\title{Tractable Combinations of Temporal CSPs}
\author[M.~Bodirsky]{Manuel Bodirsky\rsuper{a}}
\author[J.~Greiner]{Johannes Greiner\rsuper{a}}
\address{Institut f\"ur Algebra, Technische Universit\"at Dresden,  01062 Dresden, Germany} 
\email{\texttt{manuel.bodirsky@tu-dresden.de}, \texttt{contact.johannes.greiner@gmail.com}}
\author[J.~Rydval]{Jakub Rydval\rsuper{b}}
\address{Institut f\"ur Theoretische Informatik, Technische Universit\"at Dresden,  01062 Dresden, Germany} 
\email{\texttt{jakub.rydval@tu-dresden.de}}
\thanks{All authors have received funding from the European Research Council (ERC Grant Agreement no. 681988, CSP-Infinity) and the DFG Graduiertenkolleg 1763 (QuantLA)}
\begin{document}

\begin{abstract}
  The constraint satisfaction problem (CSP) of a first-order theory $T$ is the computational problem of deciding whether a given conjunction of atomic formulas is satisfiable in some model of $T$. 
We study the computational complexity of
$\CSP(T_1 \cup T_2)$ where $T_1$ and $T_2$ are theories with disjoint finite relational signatures. 
We prove that if $T_1$ and $T_2$ are the theories of temporal structures, i.e., structures where all relations have a first-order definition in $(\Q;<)$, then $\CSP(T_1 \cup T_2)$ is in P or NP-complete. 
To this end we prove a purely algebraic statement about the structure of the lattice of locally closed clones over the domain ${\mathbb Q}$ that contain $\Aut(\Q;<)$.
\end{abstract}	

% ASCii
% The constraint satisfaction problem (CSP) of a first-order theory T is the computational problem of deciding whether a given conjunction of atomic formulas is satisfiable in some model of T. We study the computational complexity of CSP$(T_1 \cup T_2)$ where $T_1$ and $T_2$ are theories with disjoint finite relational signatures. We prove that if $T_1$ and $T_2$ are the theories of temporal structures, i.e., structures where all relations have a first-order definition in $(Q;<)$, then CSP$(T_1 \cup T_2)$ is in P or NP-complete.  To this end we prove a purely algebraic statement about the structure of the lattice of locally closed clones over the domain $Q$ that contain Aut$(Q;<)$.

\maketitle

%\medskip
%\noindent \textbf{Keywords:} temporal structures, omega-categoricity, constraint satisfaction problems, combination, complexity, definability, universal algebra\\
%\noindent \textbf{2010 Mathematics Subject Classification:} 06A05, 68Q25, 08A70

\section{Introduction}\label{sec:introduction}
Deciding the satisfiability of formulas with respect to a given theory or structure is one of the fundamental problems in theoretical computer science. One large class of problems of this kind are \emph{Constraint Satisfaction Problems} (CSPs). For a finite relational signature $\tau$, the CSP of a $\tau$-theory $T$, written $\CSP(T)$, is the computational problem of deciding whether a given finite set $S$ of atomic $\tau$-formulas is satisfiable in some model of $T$.
A general goal is to identify theories $T$ such that $\CSP(T)$ can be solved in polynomial time.

Many theories that are relevant in program verification and automated deduction are of the form $T_1 \cup T_2$ where the signatures of $T_1$ and $T_2$ are disjoint; satisfiability problems of the form $\CSP(T_1 \cup T_2)$ 
are also studied in the field of Satisfiability Modulo Theories (SMT). 
If we already have a decision procedure for $\CSP(T_1)$ and for $\CSP(T_2)$, then, under certain conditions, we can use these decision procedures to construct a decision procedure for $\CSP(T_1 \cup T_2)$ in a generic way. Most results in the area of combinations of decision procedures concern decidability, rather than polynomial-time decidability; see for example~\cite{Ghilardi05_ModelTheoreticMethodsInCombinedConstraintSatisfiability,TinelliRingeissen03_UnionsOfNonDisjointTheoriesAndCombinationsOfSatisfiabilityProcedures,
BonacinaGhilardiEA06_DecidabilityAndUndecidabilityResultsForNelsonOppenAndRewriteBasedDecisionProcedures,
Ringeissen96_CooperationOfDecisionProceduresForTheSatisfiabilityProblem}.
We are particularly interested in polynomial-time decidability  
and the borderline to NP-hardness. 
The seminal result in this direction is due to Greg Nelson and Derek C. Oppen, who provided a criterion assuring that satisfiability of conjunctions of atomic and negated atomic formulas can be decided in polynomial time~\cite{NelsonOppen79, Oppen80_ComplexityConvexityAndCombinationsOfTheories}.
The work of Nelson and Oppen has been further developed later on (see for example~\cite{BaaderSchulz}) and their algorithm has been implemented in many SMT solvers (see for example~\cite{KrsticGoel07_ArchitectingSolversForSatModuloTheoriesNelsonOppenWithDpll}). 
%,\cite{DeMoura07_DevelopingEfficientSmtSolvers}). 
%
While their result directly gives sufficient conditions for
polynomial-time tractability of $\CSP(T_1 \cup T_2)$, one of their
conditions called \enquote{convexity} can be weakened to \enquote{independence of $\neq$} (see~\cite{BroxvallJonssonRenz})
without changing their proof, if we only consider conjunctions of
atomic formulas as input (see Section~\ref{sec:NO} and Section~\ref{sec:ComplexityResults} for details).
Interestingly, the weakened criterion also turns out to be remarkably tight;
Schulz~\cite{Schulz00_WhyCombinedDecisionProblemsAreOftenIntractable} as well as Bodirsky and Greiner~\cite{BodirskyGreinerCombinations} proved that in many cases not covered by the weaker criterion, $\CSP(T_1 \cup T_2)$ is NP-hard even though both $\CSP(T_1)$ and $\CSP(T_2)$ can be solved in polynomial time. 
However, there are examples of theories $T_1$ and $T_2$ that do not satisfy the weakened conditions of Nelson and Oppen, but $\CSP(T_1 \cup T_2)$ can be solved in polynomial time nevertheless (see~\cite{BodirskyGreiner_Sampling}). 
Unfortunately, there is still no general theory of polynomial-time tractability for combinations of theories. 

An important subclass of constraint satisfaction problems are \emph{temporal CSPs}, which are CSPs for the theories of structures of the form $({\mathbb Q};R_1,\dots,R_n)$ where $R_1,\dots,R_n$ are relations defined by quantifier-free first-order formulas over $({\mathbb Q};<)$; we refer to such structures as \emph{temporal structures}. 
A well-known example of such a structure is $({\mathbb Q};\Betw)$ where $\Betw := \{(a,b,c) \mid a<b<c \vee c<b<a\}$. 
The CSP for the theory of this structure is the so-called \emph{Betweenness problem} and is NP-complete~\cite{Opatrny}. 
Other well-known temporal CSPs are the Cyclic Ordering problem~\cite{GalilMegiddo}, Ord-Horn constraints~\cite{Nebel}, the network satisfaction problem for the point algebra~\cite{PointAlgebra}, and scheduling with and/or precedence constraints~\cite{and-or-scheduling}. 
It has been shown that every temporal CSP is in P or NP-complete~\cite{tcsps-journal}. 
Temporal CSPs are of particular importance for the study of polynomial-time procedures for combinations of theories, because many of the polynomial-time tractable cases do \emph{not} satisfy the weakened conditions of Nelson and Oppen because $\neq$ is not independent in these cases. 
This is unlike several other classifications for CSPs where all the polynomial-time tractable cases do satisfy the weakened conditions of Nelson and Oppen~\cite{ecsps, BMPP16, equiv-csps, BodPin-Schaefer-both, BodHils, posetCSP18,MMSNP-Journal} and hence CSPs for combinations of such theories can be solved in polynomial time. 
Some results about the complexity of CSPs for combinations of theories of temporal structures
were obtained in~\cite{BodirskyGreinerCombinations},
but they were restricted to temporal structures that contain the relations $<$ and $\neq$.

%%%%%%%%%%%%%%%%%%%%%%%%%%%%%%%%%%%%%%%%%%%%%%%%%%%%%%%%%%%%%%%%%%%%%%%%%%%%
\subsection{Contributions}
\label{sec:contributions}

Our main result is a complexity dichotomy for all problems of the form $\CSP(T_1 \cup T_2)$ where $T_1$ and $T_2$ are first-order theories of temporal structures with disjoint finite signatures. 
In order to phrase our results in this section, we need the concepts of \emph{primitive positive definability} and \emph{polymorphisms}, which are of fundamental importance in universal algebra and will be recalled in Section~\ref{sec:UA}.
The main result is the following:

\begin{thm}\label{thm:dichotomyExpansionsQ}
Let $T_1$ and $T_2$ be the theories of temporal structures $\fA_1$ and $\fA_2$ with disjoint finite signatures. 
%If one of $\fA_1,\fA_2$ is preserved by all permutations
Then $\CSP(T_1 \cup T_2)$ is polynomial-time tractable if 
\begin{enumerate}
\item for both $i \in \{1,2\}$, the structure $\fA_i$ has a binary injective polymorphism and $\CSP(\fA_i)$ is in P, or 
\item for both $i \in \{1,2\}$, the structure $\fA_i$ has a constant polymorphism, or 
\item there is a temporal structure $\fB$ such that $\CSP(\fB) = \CSP(T_1 \cup T_2)$, and $\CSP(\fB)$ is in P (this happens if, for some $i \in \{1,2\}$, all permutations are polymorphisms of 
$\fA_i$). 
\end{enumerate}
Otherwise, $\CSP(T_1 \cup T_2)$ is  NP-complete.
\end{thm}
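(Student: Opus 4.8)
\emph{Reduction to clones.} The plan is to move from theories to temporal structures and then to polymorphism clones. Since temporal structures are $\omega$-categorical, $\CSP(T_i)=\CSP(\fA_i)$, and $\CSP(T_1\cup T_2)$ is invariant, up to polynomial-time equivalence, under replacing $\fA_i$ by any temporal structure with the same primitive positive definable relations; hence it depends only on the two polymorphism clones $\Pol(\fA_1),\Pol(\fA_2)$, which are locally closed clones on $\Q$ containing $\Aut(\Q;<)$. The entire argument is then organised by our classification of such clones. Membership in NP is uniform: a satisfiable instance has a model over $\Q$ found by guessing a linear preorder on the variables compatible with all conjuncts and then checking the two resulting temporal instances, which are in NP.

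\emph{Tractability.} I would handle the three cases as follows. In case~(2), a constant polymorphism forces, by homogeneity of $(\Q;<)$, that a conjunction of $\tau_i$-atoms is satisfiable in $\fA_i$ iff it is satisfied by the constant assignment $x\mapsto 0$; applying this to both factors, an instance of $\CSP(T_1\cup T_2)$ is satisfiable iff every conjunct holds at $(0,\dots,0)$, which is checkable in linear time. In case~(1), I would invoke the Nelson--Oppen procedure~\cite{NelsonOppen79} with the weakening of convexity to independence of~$\neq$ of Broxvall, Jonsson and Renz~\cite{BroxvallJonssonRenz}: all involved theories are stably infinite (their models are infinite), each $\CSP(\fA_i)$ is in P, a binary injective polymorphism also preserves $\neq$ so that $\CSP(\fA_i\cup\{\neq\})$ is in P as well by the classification, and such an $\fA_i$ has independence of $\neq$ --- if $\phi$ is pp with $\phi\models\bigvee_{j=1}^{n}(x_{a_j}=x_{b_j})$ but no single disjunct follows, one applies the binary injective polymorphism $n-1$ times to suitable witnesses to obtain a single tuple satisfying $\phi$ together with all $n$ disequalities, a contradiction. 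In case~(3), if some $\fA_i$ is preserved by all permutations of $\Q$ then its relations are equality relations, and $\CSP(T_1\cup T_2)=\CSP(\fB)$ for the temporal structure $\fB$ obtained from $\fA_j$ ($j\neq i$) by adjoining those equality relations: that $\fB$ satisfies $T_1\cup T_2$ gives one inclusion, and for the other one reads off the equality pattern of a satisfying assignment in a model $\fM$ of $T_1\cup T_2$, uses completeness of $T_1$ and $T_2$ to transfer an injective satisfying assignment of the $\tau_j$-part back to $\fA_j$, and notes that injectivity makes the equality-relation part hold automatically; $\CSP(\fB)$ is then in P or NP-complete by~\cite{tcsps-journal}.

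\emph{NP-hardness otherwise.} If $\CSP(\fA_i)$ is NP-hard for some $i$, then so is $\CSP(T_1\cup T_2)$: putting $\fA_1$ and $\fA_2$ on the common domain $\Q$ yields a model of $T_1\cup T_2$ with $\tau_i$-reduct $\fA_i$, so an instance of $\CSP(\fA_i)$ is one of $\CSP(T_1\cup T_2)$, and conversely any model of $T_1\cup T_2$ restricts to a model of $T_i$. It remains to treat pairs with both $\CSP(\fA_i)$ in P but none of (1)--(3) holding. Here our clone classification, together with the temporal dichotomy~\cite{tcsps-journal}, reduces each $\Pol(\fA_i)$ (up to passing to a dual and to pp-interdefinability) to one of a short explicit list --- essentially the clone generated over $\Aut(\Q;<)$ by a constant operation, or a clone containing $\min$, $\mi$, or $\mx$ but no binary injective operation --- and the three exclusions leave only finitely many unordered pairs of types. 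For each such residual pair I would exhibit a primitive positive interpretation of a fixed NP-hard problem (e.g.\ positive $1$-in-$3$-SAT, or the Betweenness problem~\cite{Opatrny}) in $T_1\cup T_2$: the gadgets combine a relation of one factor that forces a disjunction of equalities or acts like $\min$ with a relation of the other factor, using that the two signatures share no linear order, in the spirit of the hardness constructions of~\cite{Schulz00_WhyCombinedDecisionProblemsAreOftenIntractable,BodirskyGreinerCombinations}.

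\emph{Main obstacle.} The crux is twofold: proving the classification of locally closed clones on $\Q$ above $\Aut(\Q;<)$ in enough detail to characterise the presence of a binary injective polymorphism and to enumerate the residual types; and, for each residual pair of types, constructing the primitive positive interpretation witnessing NP-hardness, which is the most intricate part of the case distinction.
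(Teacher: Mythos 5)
Your tractability arguments (cases (1)--(3)) are essentially sound and match the paper's: case (2) via the constant assignment, case (1) via the Nelson--Oppen procedure with convexity weakened to independence of $\neq$ (which does follow from a binary injective polymorphism), and case (3) by packaging the combination into a single temporal structure. One bookkeeping point you gloss over even here: to reason about $\CSP(T_1\cup T_2)$ with polymorphism clones at all, one needs a \emph{single} structure whose CSP equals $\CSP(T_1\cup T_2)$; the paper obtains this via the generic combination $\fA_1 * \fA_2$ (which exists because temporal structures have no algebraicity), and your proposal never names this device.

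The genuine gap is in the hardness direction. Your plan --- reduce each clone to finitely many residual types and, for each residual pair, build an ad hoc primitive positive interpretation of a fixed NP-hard problem --- is a programme, not a proof, and it omits the two ideas that actually carry the argument. First, the algebraic core of the paper is the identification of a \emph{single canonical obstruction}: if a first-order expansion of $(\Q;<)$ is preserved by $\min$, $\mi$, $\mx$, $\ll$ or a dual but has no binary injective polymorphism, then it pp-defines the specific ternary relation $\J$ (or its dual). Proving this definability dichotomy occupies most of the paper (it requires a syntactic normal form for relations preserved by an auxiliary operation $\mix$, and separate treatments of the $\mi$, $\mx$, and $\min$ cases); without it there is no finite list of ``residual types'' to enumerate. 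Second, the hardness of the combination is \emph{not} obtained by gadget-wise pp interpretations of 1-in-3-SAT or Betweenness in each residual pair. Instead, the paper proves that every polymorphism of the generic combination of $(\Q;<,\J)$ with any cross-preventing structure without algebraicity is essentially unary (via the invariant sets $I_f(a)$ and a propagation lemma), and then invokes the general hardness criterion for structures with only essentially unary polymorphisms. Your sketch ``the gadgets combine a relation of one factor that forces a disjunction of equalities with a relation of the other factor, using that the two signatures share no linear order'' does not engage with the real difficulty, namely that $\fA_2$ may only pp-define $<_2$, $\Betw_2$, $\Cycl_2$ or $\Sep_2$ after $\neq$ has been extracted from the combination using $<_1$ --- a step the paper handles with an explicit case analysis --- and that one must then verify cross prevention to apply the essential-unarity argument. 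As it stands, the NP-hardness half of your proposal would not go through without importing both of these results.
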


The technique we use to prove NP-hardness in Theorem~\ref{thm:dichotomyExpansionsQ} is based on the notion of cross prevention  introduced in~\cite{BodirskyGreinerCombinations}.

\begin{defi}\label{def:preventCrosses}
  A $\tau$-structure $\fB$ \emph{can prevent crosses} if there exists a primitive positive $\tau$-formula $\phi(x,y,u,v)$ such that
  \begin{enumerate}
   \item $\phi(x,y,u,v) \AND x=y \AND u \neq v \AND x\neq u \AND x\neq v$ is satisfiable in $\fB$,
  \item $\phi(x,y,u,v) \AND x\neq y \AND u=v  \AND x\neq u \AND y\neq u $ is satisfiable in $\fB$, and
  \item $\phi(x,y,u,v) \AND x=y \AND u=v$ is not satisfiable in $\fB$. 
  \end{enumerate}
  Any such formula $\phi$ will be referred to as a \emph{cross prevention formula} of $\fB$.
\end{defi}

An example of a structure that can prevent crosses is $(\Q; <)$; a cross prevention formula is $u<x \wedge y<v$. Another example is $(\N; E, N)$ where $E$ is an equivalence relation where all classes have exactly two elements and $N$ is the complement of $E$. In this structure $E(x,u) \AND N(y,v)$ is a cross prevention formula.

Our next contribution, Theorem~\ref{thm:JPreventCrossesHard}, is the complexity result underlying the hardness proof for Theorem~\ref{thm:dichotomyExpansionsQ} and is not limited to temporal structures. It uses the relation $\J$, which is of fundamental importance to this article and defined as follows:
\begin{align*}
 \J  \ceq \; & \big\{ (a_1,a_2,a_3)\in \mathbb{Q}^{3}\mid (a_1=a_2) \vee (a_3<a_1 \wedge a_3< a_2) \big\} \\
= \; & \big \{(a_1,a_2,a_3) \in {\mathbb Q}^n \mid a_3 \geq \min(a_1,a_2) \Rightarrow a_1=a_2 \big \} .
\end{align*}

\begin{thm}\label{thm:JPreventCrossesHard}
  Let $\fA$ be a countably infinite $\omega$-categorical structure with finite relational signature and without algebraicity. If $\fA$ can prevent crosses, then 
 $\CSP(\Th({\mathbb Q}; <, \J) \cup  \Th(\fA))$ is NP-hard.
\end{thm}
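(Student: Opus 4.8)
The plan is to give a polynomial-time reduction from a fixed NP-complete problem --- for concreteness, positive $1$-in-$3$-SAT, or the Betweenness problem $\CSP(\Q;\Betw)$ --- to $\CSP(\Th(\Q;<,\J)\cup\Th(\fA))$. The first step is to replace the combined satisfiability problem by a purely combinatorial question about equivalence relations on variables. Since $(\Q;<,\J)$ and $\fA$ are $\omega$-categorical without algebraicity and their signatures are disjoint, one has (as in the Nelson--Oppen combination, cf.\ Section~\ref{sec:NO}) a countable model $\fC$ of $\Th(\Q;<,\J)\cup\Th(\fA)$ whose reducts to the two signatures are isomorphic to $(\Q;<,\J)$ and to $\fA$, and which is \emph{generic}: a finite set $S$ of atomic formulas over the combined signature is satisfiable in $\fC$ (equivalently, in some model of the union theory) if and only if there is an equivalence relation $\theta$ on the variables of $S$ such that the $\{<,\J\}$-reduct of $S$ together with the (dis)equalities prescribed by $\theta$ is satisfiable in $(\Q;<,\J)$, and the $\fA$-reduct of $S$ together with $\theta$ is satisfiable in $\fA$. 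The absence of algebraicity is exactly what allows two independently chosen, $\theta$-consistent solutions of the two parts to be amalgamated into one model of the union; I would either invoke such a lemma from earlier in the paper or prove it by a back-and-forth argument.

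The second step builds the two basic gadgets. On the $\fA$-side, a cross prevention formula $\phi(x,y,u,v)$ of $\fA$ yields an \emph{equality-NAND}: every admissible $\theta$ must avoid identifying $x$ with $y$ and simultaneously $u$ with $v$, while each of the two one-sided identifications remains possible; using $\omega$-categoricity and no algebraicity of $\fA$ I have to verify that these one-sided configurations survive the additional $\theta$-constraints together with the side conditions $x\neq u$, $x\neq v$ (resp.\ $x\neq u$, $y\neq u$) from Definition~\ref{def:preventCrosses}. On the $(\Q;<,\J)$-side, the primitive positive formula
\[
 \mathrm{OR}_{=}(p_1,q_1,p_2,q_2)\;\ceq\;\J(p_1,q_1,p_2)\wedge\J(p_2,q_2,p_1)
\]
forces $p_1=q_1$ or $p_2=q_2$: if $p_1\neq q_1$ and $p_2\neq q_2$, the first conjunct gives $p_2<p_1$ and the second gives $p_1<p_2$, a contradiction. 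Crucially, $\mathrm{OR}_{=}$ carries an order side-effect: when exactly one of the two equalities holds, the common value of the identified pair is forced strictly below both elements of the other pair. Together with the atomic relations $x=y$ and $x<y$ of $(\Q;<,\J)$, these gadgets let me express, over the $\theta$-encoding, negation links and implications between ``equality atoms'' as well as ordered disjunctions of order atoms.

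In the third step I assemble these pieces to encode an instance of the chosen NP-complete problem: each Boolean variable (or each point of a Betweenness triple) is represented by a pair of variables whose $\theta$-identification encodes its value, the clauses (or betweenness constraints) are realized by combinations of $\mathrm{OR}_{=}$-gadgets and $\fA$-cross-prevention-gadgets, and a handful of auxiliary $<$-atoms pin down the orientation of each disjunction; correctness amounts to showing that a satisfying $\theta$ for the combined instance exists precisely when the original instance is a yes-instance. The main obstacle I anticipate is twofold. On the structural side, one must establish (or correctly invoke) the ``generic combination'' characterization so that combined satisfiability genuinely reduces to the $\theta$-search --- this is where $\omega$-categoricity and the lack of algebraicity do the real work, and where the hypotheses on $\fA$ are actually used. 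On the combinatorial side, one must control the order side-effects of the $\mathrm{OR}_{=}$-gadgets so that they neither over-constrain an intended solution nor admit a spurious $\theta$; this forces the gadget layout to be designed with some care, and is the step where the extra strength of $\J$ over plain $<$ is needed. The remaining points --- polynomial-time computability of the reduction and the routine verification of the gadget identities --- are straightforward.
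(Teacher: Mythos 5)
Your overall strategy --- pass to the generic combination, characterise combined satisfiability as a search for an equivalence relation $\theta$ on the variables, and then encode an NP-complete problem by gadgets acting on ``equality atoms'' --- is a genuinely different route from the paper's, which never builds an explicit reduction: the paper instead shows that every polymorphism of the generic combination is essentially unary (via the class $\cK$ of operations constant on some ray $H(a,i)$, Lemmas~\ref{thm:PolJInK} and~\ref{thm:backwardPropagationOfRays}, with cross prevention used to rule out two distinct constancy directions) and then invokes the general hardness criterion of Proposition~\ref{thm:ecsp-hard}. Your first step is sound and is exactly what the generic combination of Section~\ref{sec:genericCombinations} provides, and your gadget $\mathrm{OR}_{=}(p_1,q_1,p_2,q_2)\ceq\J(p_1,q_1,p_2)\wedge\J(p_2,q_2,p_1)$ is correctly analysed: it does force $p_1=q_1\vee p_2=q_2$.

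The gap is in the third step, and it is not a matter of routine detail. The gadget repertoire you actually describe consists of a \emph{binary} OR of equality atoms (from $\J$) and a \emph{binary} NAND of equality atoms (from cross prevention). Any conjunction of such gadgets imposes only 2-clauses on the Boolean variables encoded by $\theta$, i.e.\ a bijunctive, 2-SAT-type condition, which is decidable in polynomial time; in particular neither the 1-in-3 condition nor Betweenness is expressible by combinations of these gadgets, so the reduction as sketched cannot be NP-hardness-yielding. You need at least one genuinely non-bijunctive gadget. One is in fact available from the higher-arity relations $\J_n$ of Lemma~\ref{thm:JgeneratesLongJ}: for instance $\J_4(p_1,q_1,p_2,p_3)\wedge\J_4(p_2,q_2,p_3,p_1)\wedge\J_4(p_3,q_3,p_1,p_2)$ forces at least one of the three pairs to be identified (if none were, each $p_i$ would lie strictly above the minimum of the other two, which is impossible for the overall minimum). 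But nothing of this kind appears in your sketch, and ``a handful of auxiliary $<$-atoms'' cannot supply it. A second, smaller gap: Definition~\ref{def:preventCrosses} only guarantees that the two one-sided configurations of $\phi$ are satisfiable \emph{in isolation} with specific side disequalities; when many copies of $\phi$ share variables with each other and with the order gadgets, you must amalgamate the partial solutions, which requires the quantitative form of genericity (Lemma~2.7 of the cited combination paper, which the published proof invokes explicitly), not just the $\theta$-characterisation of satisfiability. Until both points are filled in, the reduction is not established.
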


Examples of $\omega$-categorical structures without algebraicity and with cross prevention can be found in Section~\ref{sec:combine}.

Our third contribution is the algebraic cornerstone of this article, which is a result about the definability  of $\J$. 
If $R$ is a temporal relation, then $-R$ denotes the \emph{dual of $R$}, which is the  temporal relation $\{(a_1,\dots,a_n) \in \Q^n \mid (-a_1,\dots,-a_n) \in R\}$. 
The \emph{dual} of an operation $f\colon {\mathbb Q}^n \to {\mathbb Q}$ 
 is defined by $(x_{1}, \dots ,x_{n}) \mapsto -f(-x_{1}, \dots , -x_{n})$. Hence, for any temporal relation $R$ and any operation $f$ on $\Q$,
the operation $f$ preserves $R$ if and only if the 
dual of $f$ preserves the dual of $R$.
The functions $\min, \min, \mx$ and $\ll$ will be explained in Section~\ref{sec:TCLs}.

 \begin{thm}\label{thm:BinInjOrJ}
  Let $\fA$ be a first-order expansion of $(\Q;<)$ with a finite relational signature such that 
  %$\fA$ is preserved by 
  $\min$, $\mi$, $\mx$, $\ll$ or one of their duals is a polymorphism of $\fA$. Then the following are equivalent:
  \begin{itemize}
  %\item $\ll$ or $\dll$ is a polymorphism of $\fA$,
  \item $\fA$ does not have a binary injective polymorphism.
  \item $\J$ or its dual $-\J$ has a primitive positive definition in $\fA$. 
  \end{itemize}
\end{thm}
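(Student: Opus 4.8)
The implication from "$\J$ or $-\J$ is primitive positive definable in $\fA$" to "$\fA$ has no binary injective polymorphism" needs no hypothesis on $\fA$; I would derive it from the self-contained fact that \emph{no binary injective operation $g\colon \Q^2\to\Q$ preserves $\J$}. Suppose such a $g$ exists. For all $s,t\in\Q$ the triple $(t,t,s)$ lies in $\J$, and for all $r\in\Q$ and $p>r$ the triple $(p,p+1,r)$ lies in $\J$ (since $r<p<p+1$). Applying $g$ componentwise to this pair of triples gives $\bigl(g(t,p),g(t,p+1),g(s,r)\bigr)\in\J$; as $g$ is injective and $p\neq p+1$ the first two entries differ, so the defining disjunction of $\J$ forces $g(s,r)<g(t,p)$. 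Hence $g(s,r)<g(t,p)$ for all $s,r,t\in\Q$ and all $p>r$. Feeding the same two triples to $g$ in the opposite order yields symmetrically $g(r,s)<g(p,t)$ for all $r,s,t\in\Q$ and all $p>r$. Instantiating the first at $s=1,r=0,t=0,p=1$ and the second at $r=0,s=1,t=0,p=1$ gives $g(1,0)<g(0,1)$ and $g(0,1)<g(1,0)$, a contradiction. Since the dual of a binary injective operation is binary injective and $f$ preserves $-\J$ iff its dual preserves $\J$, no binary injective operation preserves $-\J$ either. Consequently, if $\J$ or $-\J$ is primitive positive definable in $\fA$, then every polymorphism of $\fA$ preserves it, so $\fA$ has no binary injective polymorphism.

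\textbf{The converse: set-up.} Assume $\fA$ has no binary injective polymorphism and that one of $\min,\mi,\mx,\ll$, or a dual of one of these, is a polymorphism of $\fA$. Passing to the dual structure if necessary (its polymorphisms are exactly the duals of the polymorphisms of $\fA$; it has a binary injective polymorphism iff $\fA$ does; primitive positive definability of a relation in it corresponds to primitive positive definability of the dual relation in $\fA$; and the dual of $\J$ is $-\J$), I may assume that some $g\in\{\min,\mi,\mx,\ll\}$ itself is a polymorphism of $\fA$. Then $\Pol(\fA)$ is a locally closed clone that contains $\Aut(\Q;<)$ (because $\fA$ is a first-order expansion of $(\Q;<)$) and $g$, and that contains no binary injective operation.

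\textbf{The converse: structural claim and conclusion.} The core of the argument is the following statement about the lattice of locally closed clones above $\Aut(\Q;<)$: for each $g\in\{\min,\mi,\mx,\ll\}$, every locally closed clone that contains $\Aut(\Q;<)$ and $g$ but is contained neither in $\Pol(\Q;<,\J)$ nor in $\Pol(\Q;<,-\J)$ contains a binary injective operation. This is where the description of the temporal clone lattice recalled in Section~\ref{sec:TCLs} is used, and together with the claim it is the purely algebraic statement about the clone lattice announced in the abstract. Granting it, $\Pol(\fA)$ — having no binary injective operation — is contained in $\Pol(\Q;<,\J)$ or in $\Pol(\Q;<,-\J)$, i.e.\ $\J$ or $-\J$ is preserved by all polymorphisms of $\fA$. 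Since $\fA$ is a first-order reduct of $(\Q;<)$ and hence $\omega$-categorical, the primitive positive definability/polymorphism Galois correspondence then yields that $\J$ or $-\J$ is primitive positive definable in $\fA$, completing the proof.

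\textbf{The main obstacle.} What remains, and what I expect to be the technical heart, is the structural claim. One first checks that $\min$ (and, after possibly dualizing, one of $\mi,\mx$) preserves $\J$ and that $\ll$ and the remaining one of $\mi,\mx$ preserve $\J$ or $-\J$, so that the generators alone do not force a binary injective operation. Then, for each temporal operation $f$ that together with $g$ and $\Aut(\Q;<)$ generates a clone violating both $\J$ and $-\J$, one must produce a binary injective operation from $f$ and $g$. The cleanest route is to normalise $f$ first: by the Ramsey property of $(\Q;<)$ one reduces to an $f$ that is canonical with respect to $\Aut(\Q;<)$, hence determined by its behaviour on the finitely many orbits of pairs of strictly increasing, strictly decreasing, or constant argument tuples; the simultaneous failure of $f$ to preserve $\J$ and $-\J$ then pins this behaviour down enough to build the desired binary injective operation by an explicit interpolation and composition argument. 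This case analysis is technical but uses only standard tools from the universal-algebraic study of reducts of $(\Q;<)$.
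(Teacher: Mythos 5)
Your \enquote{easy} direction is correct and complete: the pair of triples $(t,t,s)$ and $(p,p+1,r)$ together with injectivity does force the two incompatible inequalities $g(1,0)<g(0,1)$ and $g(0,1)<g(1,0)$, and the dualization and Galois-connection steps are routine. This part is in fact slightly more carefully argued than the corresponding passage in the paper. The reduction to a single $g\in\{\min,\mi,\mx,\ll\}$ via dualization is also fine (though you could have noted that the case $g=\ll$ is vacuous under the hypothesis of no binary injective polymorphism, since $\ll$ induces a linear order on $\Q^2$ and is therefore itself binary injective; relatedly, your sanity check that \enquote{$\ll$ \dots preserve[s] $\J$ or $-\J$} is false --- by your own easy direction no binary injection preserves either relation).

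The genuine gap is the converse. Your \enquote{structural claim} --- every locally closed clone containing $\Aut(\Q;<)$ and $g$ that is contained in neither $\Pol(\Q;<,\J)$ nor $\Pol(\Q;<,-\J)$ contains a binary injection --- is not a lemma you can cite from the background recalled in Section~\ref{sec:TCLs}; it is, up to the dualization bookkeeping, exactly the statement being proved, and it is where essentially all of the work in the paper lies. The paper's route is: absence of a binary injection rules out $\ll$ and its dual, so $\fA$ is preserved by $\pp$ or $\dpp$ (by the inclusions of Section~\ref{sec:polysOfTemporal}), and then Theorem~\ref{thm:dichotomy} produces an explicit primitive positive definition of $\J$. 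That theorem in turn requires the case split into $T_3$ pp-definable (Proposition~\ref{thm:pp-new}), preservation by $\mi$ (further split on whether $\leq$ is pp-definable, introducing the operation $\mix$ and its syntactic normal form, Propositions~\ref{thm:mi+leq}--\ref{thm:mix}), preservation by $\mx$ but not $\mi$ (Propositions~\ref{thm:mxmi} and~\ref{thm:mx}), and preservation by $\min$ but neither $\mi$ nor $\mx$ (Proposition~\ref{thm:min}) --- each case ending in a concrete syntactic construction of a pp-formula from a reduced CNF definition of some relation of $\fA$. Your proposed alternative via canonical functions over $(\Q;<)$ is only a one-sentence sketch; it is not established that a binary polymorphism violating both $\J$ and $-\J$ can be canonized without losing that property, nor that the resulting canonical behaviour generates a binary injection, and no case analysis is actually carried out. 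As it stands, the hard implication is asserted rather than proved.
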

 
Theorem~\ref{thm:BinInjOrJ} characterises the first-order expansions of $(\Q;<)$ among the polynomial-time tractable cases in the dichotomy of Bodirsky and K\'ara (see Theorem~\ref{thm:bod-kara}) whose first-order theory does not satisfy the weakened tractability conditions by Nelson and Oppen because $\neq$ is not independent from their theory (see Section~\ref{sec:NO} for the definition).

\subsection{Significance of the Result in Universal Algebra}
Theorem~\ref{thm:BinInjOrJ} is of independent interest in universal algebra; for an introduction to the universal-algebraic concepts that appear in this section we refer the reader to Section~\ref{sec:UA}. 
Theorem~\ref{thm:BinInjOrJ} can be seen as a result about locally closed clones on a countably infinite domain $B$ that are \emph{highly set-transitive}. 
A permutation group $G$ on a set $B$ is said to be highly set-transitive if for all finite subsets $S_1$ and $S_2 $ of $B$ of equal size there exists a permutation in $G$ that maps $S_1$ to $S_2$. An operation clone on a set $B$ is said to be highly set-transitive if it contains a highly set-transitive permutation group. 

It can be shown that the highly set-transitive locally closed clones are precisely the polymorphism clones of temporal structures (possibly with infinitely many relations), up to a bijection between $B$ and $\Q$~\cite{tcsps-journal}. 
These objects form a lattice: the meet of two clones is the intersection of the clones and the join can be obtained as the polymorphism clone of 
all relations preserved by both of the clones (see, e.g., Section 6.1 in~\cite{Book}). 
Similarly, as the lattice of clones over the
set $\{0,1\}$ plays a fundamental role for studying finite algebras (it has been classified by Post~\cite{Post}), the lattice of locally closed highly set-transitive clones over $\Q$ is of fundamental importance for the study of locally closed clones in general. 
This lattice is of size $2^{\omega}$ even if we restrict our attention to closed clones that contain 
all permutations~\cite{BodChenPinsker}. 
However, the lower parts of the lattice appears to be more structured and amenable to classification.
We pose the following question.

\begin{qu}\label{quest:lattice}
Are there only countably many locally closed highly set-transitive clones over a fixed countably infinite set that do not contain a binary injective operation? 
\end{qu}

Question~\ref{quest:lattice} has a positive answer
in the case that the clone contains all permutations of the base set~\cite{BodChenPinsker}. 
Theorem~\ref{thm:end} below shows that answering 
Question~\ref{quest:lattice} can be split into finitely many cases, depending on whether the clone contains a constant operation, or whether it preserves one out of a finite list of temporal relations. 
%We believe that the case where the clones preserve $<$ is the most interesting case. 
Theorem~\ref{thm:BinInjOrJ} shows that in case~\ref{case:tractableCasesTCL} of Theorem~\ref{thm:bod-kara}, we can even focus on clones that preserve the relation $\J$ or its dual.

%%%%%%%%%%%%%%%%%%%%%%%%%%%%%%%%%%%%%%%%%%%%%%%%%%%%%%%%%%%%%%%%%%%%%%%%%%%%

\subsection{Outline of the Article}
We first recall some basic concepts from model theory in Section~\ref{sec:mt}.
Then, the classical Nelson-Oppen conditions for obtaining polynomial-time decision procedures for combined theories are presented in Section~\ref{sec:NO}; 
a slight generalisation of their results can be found in Section~\ref{sec:ComplexityResults}. 
We then define the model-theoretic notion of a \emph{generic combination} of two structures with disjoint relational signatures in Section~\ref{sec:genericCombinations}, which plays a crucial role in our proof. 
The reason is that we may apply universal algebra to study the complexity of CSPs of structures but not of theories.
Basic universal-algebraic concepts are introduced in Section~\ref{sec:UA}. 
Our results build on the classification of the temporal CSPs that can be solved in polynomial time~\cite{tcsps-journal}, which we present along with other known facts about temporal structures in Section~\ref{sec:TCLs}. 

The proof of Theorem~\ref{thm:BinInjOrJ} is organised as follows. 
The difficult direction is to find a primitive positive definition of $\J$ 
%or $-\J$ 
in $\fA$ if $\fA$ is not preserved by a binary injective polymorphism. If $\Pol(\fA)$ contains $\mi$, then 
the proof is easier if $\leq$ is primitively positively definable in the structure $\fA$. 
If the relation $\leq$ is not primitively positively definable in $\fA$, then a certain operation $\mix$ is a polymorphism of $\fA$. 
We discuss $\mix$ in Section~\ref{sec:mix} and use results thereof in Section~\ref{sec:mi} to show the primitive positive definability of $\J$ in $\fA$.

The case that $\Pol(\fA)$ contains $\mx$ but not $\mi$ is treated in Section~\ref{sec:mx}, 
and the case that $\Pol(\fA)$ contains $\min$ but neither $\mi$ nor $\mx$ is treated in Section~\ref{sec:min}. All of these partial results are put together in Section~\ref{sec:dicho}. 

Finally, Section~\ref{sec:combine} uses our definability dichotomy theorem (Theorem~\ref{thm:BinInjOrJ})
to prove the complexity dichotomy for combinations of temporal CSPs.  

\section{Preliminaries}\label{sec:preliminaries}
We use the notation $[k]$ for the set $\set{1,\dots, k}\subseteq \N$.  

\subsection{Model Theory}
\label{sec:mt}
A \emph{relational signature} is a set of relation symbols, each endowed with a natural number, stating its arity.
Let $\tau$ be relational signature. A \emph{$\tau$-structure} $\fA$ consists of a set $A$, the \emph{domain} of $\fA$, and a relation $R \subseteq A^k$ for each $R\in \tau$ of arity $k$. We use the notation $\fA = (A; R_1, \dots, R_n)$ for relational structures with finite signature.

A $\tau$-formula is \emph{atomic} if it is of the form $x_1 = x_2$, $\bot$ (the logical “false”), or
$R(x_1, \dots, x_n)$ for $R\in \tau$ of arity $n$ where $x_1, \dots,  x_n$ are variables.
A \emph{literal} is either an atomic formula or a negated atomic formula. 
A $\tau$-formula is \emph{primitive positive (pp)} if it is of the form $\exists x_{k}, x_{k+1},\dots, x_{\ell}\ldotp \phi(x_1,\dots,x_{\ell})$ where $\phi$ is a conjunction of atomic $\tau$-formulas and $k\geq 1$ is allowed to be larger than $\ell$, in which case all variables are unquantified.
A $\tau$-formula is \emph{existential positive} if it 
is a disjunction of primitive positive formulas; note that every first-order formula which does not contain  negation or universal quantification is equivalent to such a formula.
A \emph{$\tau$-theory} is a set of first-order $\tau$-sentences, i.e.,  $\tau$-formulas without free variables.  
For a $\tau$-strucutre $\fA$ the \emph{(first-order) theory of $\fA$}, denoted by $\Th(\fA)$, is the set of all first-order $\tau$-sentences that hold in $\fA$. 
If $T$ is a $\tau$-theory and $\fA$ a $\tau$-structure, then $\fA$ is a \emph{model} for $T$, written $\fA \models T$, if all sentences in $T$ hold in $\fA$. In particular $\fA \models \Th(\fA)$.

The CSP of a $\tau$-structure $\fA$, written $\CSP(\fA)$, is the computational problem of deciding, given a conjunction of atomic $\tau$-formulas, whether or not the conjunction is satisfiable in $\fA$. 
More generally, the CSP of a $\tau$-theory $T$, written $\CSP(T)$, is the computational problem of deciding whether a given conjunction of atomic $\tau$-formulas is satisfiable in \emph{some} model of $T$. Note that $\CSP(\fA)$ and $\CSP(\Th(\fA))$ are the same problem. 
Let $\fA$ be a relational $\tau$-structure and $\fB$ a relational $\sigma$-structure with $\tau\subseteq \sigma$.

If $\fA$ is a $\tau$-structure and $\phi(x_1,\dots,x_n)$ is a $\tau$-formula with free variables $x_1,\dots,x_n$, then the 
\emph{relation defined by $\phi$} is the relation 
$\set{(a_1, \dots, a_n)\in A^n \mid \fA \models \phi(a_1, \dots, a_n)}$.
We say that a relation is \emph{primitively positively definable} in $\fA$ if 
there is a primitive positive formula that defines $R$ in $\fA$.  
%By 
%$\langle \fA \rangle$ 
%we denote the set of all relations which are primitively positively definable in $\fA$. 
First-order and existential positive definability are defined analogously. 
Notice that a definition of a relation $R$ via a formula $\phi$ in the above way also yields a bijection between coordinates of tuples of $R$ and the free variables of $\phi$. We will use this bijection implicitly whenever we say that $t\in R$ \emph{satisfies} a formula on the free variables of $\phi$.

If $\fA$ can be obtained from $\fB$ by deleting relations from $\fB$, then $\fA$ is called a \emph{reduct} of $\fB$, and $\fB$ is called an \emph{expansion} of $\fA$. 
If the signature of $\fA$ equals $\tau$, then the reduct $\fA$ of $\fB$ is also denoted by $\fB^{\tau}$.  
An expansion $\fB$ of $\fA$ is called \emph{first-order expansion} if all relations in $\fB$ have a first-order definition in $\fA$. The expansion of $\fA$ by a relation $R$ is denoted by $(\fA;R)$.
As usual, $\Aut(\fA)$ denotes the set of all automorphisms of $\fA$, i.e., isomorphisms from $\fA$ to $\fA$.
 For $k\in \N$ and $a\in A^k$, the set $\Aut(\fA)a \ceq \set{(\alpha(a_1), \dots, \alpha(a_k)) \mid \alpha \in \Aut(\fA)}$ is called the \emph{orbit} of $a$.

The theory of $(\Q;<)$, or any first-order expansion thereof, has the remarkable property of \emph{$\omega$-categoricity}, that is, it has only one countable model up to isomorphism (see, e.g.,~\cite{Hodges}). 
% (see Proposition~3.1.1 in~\cite{Bodirsky-HDR-v8}). 
The class of $\omega$-categorical  relational structures can be characterised by the following theorem.

\begin{thmC}[Engeler, Ryll-Nardzewski, Svenonius, see~\cite{Hodges}, p.~171]\label{thm:ryll} %\cite{HodgesLong} p. 341
  Let $\fA$ be a countably infinite structure with countable signature. Then, the following are equivalent:
  \begin{enumerate}
  \item $\fA$ is $\omega$-categorical;
  \item for all $n\geq 1$ every orbit of $n$-tuples is first-order definable in $\fA$;
  \item for all $n\geq 1$ there are only finitely many orbits of $n$-tuples.
  \end{enumerate}
\end{thmC}

\subsection{Universal Algebra}\label{sec:UA}

 A operation $f\colon A^m \rightarrow A$ \emph{preserves} a relation $R \subseteq A^n$ if for all $t_1, \dots, t_m \in R$ we have $f(t_1, \dots, t_m) \in R$ where $f$ is applied component-wise.  
 For instance, the projection of arity $n$ to the $i$-th coordinate, denoted by $\pi^n_i$, preserves every relation over $A$. 
 For a set $S$ of relations over $A$ we define $\Pol(S)$ as the set of all operations on $A$ that preserve all relations in $S$. We define $\Pol(\fA)$ as $\Pol(S)$ where $S$ is the set of all relations of $\fA$. Unary polymorphisms are also called \emph{endomorphisms} of $\fA$; the set of all endomorphisms is denoted by $\End(\fA)$.  

For a set $S$ of functions on a set $A$ we define $\Inv(S)$ (`invariants of $S$') as the set of all finitary relations over $A$ which are preserved by all functions in $S$. 

\begin{thmC}[\cite{BodirskyNesetrilJLC}, Theorem~4] \label{thm:InvPol} Let $\fA$ be a countable $\omega$-categorical relational structure. Then a relation $R$ over $A$ is preserved by the polymorphisms of $\fA$ if and only if $R$ has a primitive positive definition in $\fA$.
%, i.e., $\Inv(\Pol(\fA))=\langle \fA \rangle$.  
\end{thmC}

As a consequence of Theorem~\ref{thm:InvPol}, we may go back and forth between the existence of certain polymorphisms and the primitive positive definability of certain relations. Furthermore, Theorem~\ref{thm:InvPol} implies that the set of polymorphisms of an $\omega$-categorical relational structure $\fA$ fully captures the complexity of $\CSP(\fA)$.

%,

%% eigentlich reden wir hier nur über operation clones. 
One of the central notions of universal algebra is that of a clone. A set of operations on a common domain is a \emph{clone} if it contains all projections and is closed under composition of functions. Thus, if we fix the domain, an arbitrary intersection of clones is again a clone. Therefore, given a set of operations $F$ over a common domain, there is a unique minimal clone $\langle F \rangle$ containing $F$, which we call the clone \emph{generated} by $F$. For a clone $\cF$ on domain $A$ we will also need the \emph{local closure} of $\cF$, denoted by $\overline{\cF}$, which is the smallest clone which contains $\cF$ and for any $n\in \N$ and $g\colon A^n \rightarrow A$ the following holds: If for all finite $S\subseteq A$ there exists $f_S \in \cF$ such that $f_S|_{S^n} = g|_{S^n}$ then $g\in \overline{\cF}$. 
If $\cF = \overline{\cF}$, then $\cF$ is \emph{locally closed}. It is easy to show that $\Pol(\fA)$ is always a locally closed clone for any relational structure $\fA$.

\subsection{The Conditions of Nelson and Oppen}
\label{sec:NO}
In this section we recall the classical conditions of Nelson and Oppen on theories $T_1$ and $T_2$ with disjoint signatures that guarantee the polynomial-time tractability of $\CSP(T_1 \cup T_2)$. Their condition can be found in~\cite{NelsonOppen79, Oppen80_ComplexityConvexityAndCombinationsOfTheories} and~\cite{BaaderSchulz} and are the following:

\begin{itemize}
 \item Both theories $T_1$ and $T_2$ must be \emph{stably infinite}, i.e., whenever a finite set of literals $S$ is satisfiable in a model of the theory, then there is also an infinite model of the theory where $S$ is satisfiable. 
 \item Both theories must be \emph{convex}, i.e., if we choose a finite set of literals $S$ such that for all $i\in [n]$ there exist a model of the theory where $S \cup \set{x_i \neq y_i}$ is satisfiable, then there exists a model of the theory where $S \cup \set{x_1\neq y_1, \dots, x_n \neq y_n}$ is satisfiable.
 \item For $i=1$ and $i=2$ there exist polynomial-time decision procedures to decide whether a finite set of $\tau_i$-literals is satisfiable in some model of $T_i$.
\end{itemize}

The theorem of Nelson and Oppen states that if $T_1$ and $T_2$ satisfy these three conditions, then there exists a polynomial-time procedure 
that decides whether a given set of literals over the signatures of $T_1$ and $T_2$ is satisfiable in a model of $T_1 \cup T_2$.
Note that this decision problem is in general not equal to $\CSP(T_1 \cup T_2)$, as $S$ is restricted to  atomic formulas in the latter. 
Nelson and Oppen always allow relations of the form $x \neq y$ in the input, which we would like to avoid, because there are first-order 
expansions $\fA$ of $(\Q;<)$ with a polynomial-time tractable CSP 
%\red{where adding a relation symbol $\neq$ to $\tau_1$ which denotes $\{(x,y) \in \Q^2 \mid x \neq y\}$} 
where adding the relation $\neq$ to $\fA$   
makes the CSP hard, as the following examples shows. 

\begin{exa}
Let $\fA$ be the temporal structure $(\Q; <, R^{\min}_{\leq})$ where $R^{\min}_{\leq}$ is the relation defined by $\phi(x,y,z) \ceq x \geq y \OR x\geq z$.
% where $R^{\min} \ceq \{(a,b,c) \mid a\geq b \vee a \geq c\}$. 
Then
$\CSP(\fA)$ is in P by Theorem~\ref{thm:bod-kara} below because $\fA$ is preserved by $\min$. But $\CSP(\fA;\neq)$ is NP-hard by Theorem~\ref{thm:bod-kara} because $(\fA;\neq)$ is neither preserved by a constant operation, $\mi$, $\mx$, $\min$, nor by their duals. 
\end{exa}

An analysis of the correctness proof of the algorithm of Nelson and Oppen yields that the set of literals in the definition of convexity can be replaced by a set of atomic formulas if the input of the decision problem is restricted to a set of atomic formulas, i.e., we only require that \emph{$\neq$ is independent from $T_1$ and $T_2$} (see Definition~\ref{def:independence}). Independence of $\neq$, stably infinite theories, tractable CSPs and the presence of $\neq$ in the signature of $T_1$ and $T_2$ is what we refer to as the \emph{weakened} conditions of Nelson and Open.

Furthermore, Nelson and Oppen did not require that the signature is purely relational. However, this difference is rather a formal one, because a function can be represented by its graph and nested functions can be unnested in polynomial time by introducing new existentially quantified variables for nested terms. In Section~\ref{sec:ComplexityResults} we will prove a tractability criterion which is slightly stronger than the criterion of Nelson and Oppen with weakened conditions.

\subsection{Generic Combinations}\label{sec:genericCombinations}
In the context of combining decision procedures for CSPs, the notion of generic combinations has been introduced in~\cite{BodirskyGreinerCombinations}. However, others have studied such structures before (for instance in \cite{Oligo,Topo-Dynamics,42,LinmanPinsker}). 

\begin{defi}
Let $\fA_1$ and $\fA_2$  be countably infinite $\omega$-categorical structures with disjoint relational signatures $\tau_1$ and  $\tau_2$. 
A countable model $\fA$ of $\Th(\fA_1) \cup \Th(\fA_2)$
is called a \emph{generic combination of
$\fA_1$ and $\fA_2$} if for any $k\in \N$ and any $a, b \in A^k$ with pairwise distinct coordinates
\begin{align*}
& \Aut(\fA^{\tau_1})  a \cap \Aut(\fA^{\tau_2})  b \neq \emptyset  \quad\text{and}\\
& \Aut(\fA^{\tau_1})  a \cap \Aut(\fA^{\tau_2})  a  = \Aut(\fA)  a. 
\end{align*}
All generic combinations of $\fA_1$ and $\fA_2$ are  isomorphic (Lemma~2.8 in~\cite{BodirskyGreinerCombinations}), so we will speak of \emph{the} generic combination of two structures, and denote it by $\fA_1 * \fA_2$. 
\end{defi}

By definition, the $\tau_i$ reduct of $\fA \ceq \fA_1 * \fA_2$ is a model of $\Th(\fA_i)$, which is $\omega$-categorical, and therefore, $\fA^{\tau_i} \cong \fA_i$ for $i=1$ and $i=2$. 
Hence, we may assume without loss of generality that $\fA_1$, $\fA_2$, and $\fA$ have the same domain. 
It is an easy observation that an instance $\phi_1 \AND \phi_2$ of $\CSP(T_1 \cup T_2)$, where $\phi_i$ is a $\tau_i$-formula,  is satisfiable if and only if for $i=1$ and $i=2$ there exist models $\fA_i$ of $T_i$ with $\lvert A_1 \rvert = \lvert A_2 \rvert$ such that $\phi_i$ is satisfiable in $\fA_i$ and the satisfying assignments of $\phi_1$ and $\phi_2$ identify exactly the same variables. Therefore, the fact that $\CSP(\fA) = \CSP(\Th(\fA_1) \cup \Th(\fA_2))$ easily follows from $\Aut(\fA^{\tau_1})  a \cap \Aut(\fA^{\tau_2})  b \neq \emptyset $ and $\omega$-categoricity of $\fA_1$ and $\fA_2$.

A structure $\fA$ has \emph{no algebraicity} if every set defined by a first-order formula over $\fA$ with parameters from $A$ is either contained in the set of parameters or infinite. The following proposition characterises when generic combinations of $\omega$-categorical structures exist.

\begin{thmC}[Proposition~1.1 in~\cite{BodirskyGreinerCombinations}]\label{thm:existenceGenericComb}
Let $\fA_1$ and $\fA_2$ be countably infinite $\omega$-categorical structures with disjoint relational signatures. Then $\fA_1$ and $\fA_2$ have a generic combination if and only if either both $\fA_1$ and $\fA_2$ do not have algebraicity 
or one of $\fA_1$ and $\fA_2$ does have algebraicity and the other structure is preserved by all permutations. 
\end{thmC}

%%%%%%%%%%%%%%%%%%%%%%%%%%%%%%%%%%%%%%%%%%%%%%%%%%%%%%%%%%%%%%
 \subsection{Temporal Structures}\label{sec:TCLs}
 A relation with a first-order definition over $(\Q;<)$ is called \emph{temporal}. An example of a temporal relation is the relation $\Betw$ from the introduction.
 A \emph{temporal structure} is a relational structure $\fA$ with domain $\mathbb{Q}$ all of whose relations are temporal. The structure $(\mathbb{Q};<)$ is homogeneous, i.e., every order-preserving map between two finite subsets of $\mathbb{Q}$ can be extended to an automorphism of $(\Q;<)$. Therefore, the orbit of a tuple in $\fA$ is determined by identifications and the ordering among the coordinates. 
 It follows from Theorem~\ref{thm:ryll} that all temporal structures are $\omega$-categorical.

\subsubsection{Polymorphisms of Temporal Structures}\label{sec:polysOfTemporal}
One of the fundamental results in the proof of the complexity dichotomy for temporal CSPs, Theorem~\ref{thm:end} below, also plays an important role for combinations of temporal CSPs. To understand Theorem~\ref{thm:end} and for later use, we define the relations $\Cycl$, $\Betw$, and $\Sep$:
\begin{align*}%% DO NOT USE \set here!
 \Betw \ceq \{(x,y,z) \in \Q^3 \mid \,&(x < y < z) \OR (z < y < x)\}\\
 \Cycl \ceq \{(x,y,z) \in \Q^3 \mid \,&(x < y < z) \OR (y < z < x) \OR (z < x < y)\}\\
 \Sep \ceq \{(x,y,u,v)\in \Q^3 \mid \,&(x < u < y < v) \OR (y < u < x < v) \OR\\
    &(x < v < y < u) \OR (y < v < x < u)\}
\end{align*}

 \begin{thmC}[Bodirsky and K\'ara~\cite{tcsps-journal}, Theorem 20]
 \label{thm:end}
 Let $\fA$ be a temporal structure. Then at least one of the following cases applies.
 \begin{itemize}
 \item $\fA$ has a constant endomorphism; 
\item One of the relations $<$, $\Cycl$, $\Betw$, or $\Sep$ has a primitive positive definition in $\fA$. 
\item $\fA$ is preserved by all permutations of $\Q$. 
 \end{itemize}
 \end{thmC}

 We introduce several notions that are needed to describe the polynomial-time tractable temporal CSPs from~\cite{tcsps-journal}. 
However, as opposed to~\cite{tcsps-journal} we flip the roles of $0$ and $1$ in the following definition  
because in this way the resulting systems of equations are homogeneous (see Theorem~\ref{thm:SNF_base} (4) below;    we follow~\cite{RydvalFP}). 
 \begin{defi}
 For a tuple $t\in \mathbb{Q}^{n}$ we define the \emph{min-indicator function} $\chi\colon \mathbb{Q}^{n}\rightarrow \{0,1\}^{n}$ by $\chi(t)[i]\ceq 1$ if and only if $t[i]\leq t[j]$ for all $ 1\leq j \leq n$. The tuple $\chi(t)\in \{0,1\}^{n}$ is called the \emph{min-tuple} of $t\in \mathbb{Q}^{n}$. For an $n$-ary relation $R$ we define 
 $$\chi(R) \ceq \set{\chi(t) \mid t\in R} \text{ and }\chi_0(R) \ceq \chi(R) \cup \{(\underbrace{0,\dots,0}_{n \text{ zeros}})\}.$$
 \end{defi}
 Let $\min$ denote the binary minimum operation on $\mathbb{Q}$.
 For any fixed endomorphisms $\alpha,\beta,\gamma$ of $(\mathbb{Q};<)$ which satisfy $\alpha(a)<\beta(a)<\gamma(a)<\alpha(a+\epsilon)$ for every $a\in \mathbb{Q}$ and every $\epsilon\in \mathbb{Q}$ with $\epsilon >0$, the binary operation $\mi$ on $\mathbb{Q}$ is defined by
 \begin{displaymath}
   \mi(x,y)\ceq \left\{ \begin{array}{ll}  \alpha(x) & \text{ if } x= y, \\
                       \beta(y) & \text{ if } x>y, \\
                       \gamma(x) & \text{ if } x<y.
                     \end{array}  \right. 
 \end{displaymath}
The intuition behind this definition is best explained through illustrations; for such illustrations, additional explanation, and the argument why such functions do exist we refer the reader to~\cite{tcsps-journal} or~\cite{Book}; the same applies to the operations that are introduced in this section. 
 For %possibly different MB: irritierend, es ist doch gerade der Punkt dass es auf die alpha, beta nicht ankommt. 
  $\alpha, \beta$ satisfying the same conditions, $\mx$ is the binary operation on $\mathbb{Q}$ defined by
 \begin{displaymath}
  \mx(x,y)\ceq \left\{ \begin{array}{ll} 
    \alpha(\min(x,y)) & \text{ if } x\neq y, 	\\
    \beta(x) & \text{ if } x= y.  
    \end{array}   \right. 
 \end{displaymath}

 \begin{thmC}[\cite{RydvalFP}, Lemma~4.1 and Theorem~5.2] \label{thm:mixedmx} 
 We have 
 $$
\overline{\langle \{\mx\} \cup \Aut({\mathbb Q};<) \rangle} = \Pol({\mathbb Q};X)$$ where
 \[X \ceq \{(x,y,z)\in \mathbb{Q}^{3}\mid x=y<z\vee x=z<y\vee y=z<x\}.\]
Moreover, every temporal structure $\fB$ preserved by $\mx$ either admits a primitive positive definition of $X$ or is preserved by a constant operation or by $\min$.
\end{thmC}

Let $\ll$ be an arbitrary binary operation on $\mathbb{Q}$ such that $\ll(a,b)<\ll(a',b')$ if and only if 
 \begin{itemize}
 	\item $a\leq 0$ and $a<a'$, or
 	\item $a\leq 0$ and $a=a'$ and $b<b'$, or
 	\item $a,a'>0$ and $b<b'$, or
 	\item $a>0$ and $b=b'$ and $a<a'$.
\end{itemize}
Let $\lex \colon \Q^2 \rightarrow \Q$ be an arbitrary operation that induces the lexicographic order on $\Q^2$ (just like $\ll$ if the first argument is not positive). 
Let $\pp \colon \Q^2 \rightarrow \Q$ be an arbitrary operation such that $\pp(a,b) \leq  \pp(a',b')$ if and only if either
\begin{itemize}
        \item $a\leq 0$ and $a \leq a'$, or
        \item $0 < a$, $0<a'$ and $b \leq b'$ holds.
 \end{itemize}
Notice that the functions $\mi$, $\mx$, $\pp$, $\ll$, their duals, and $\lex$ are not uniquely specified by their definitions. They rather specify a unique weak linear order on $\Q^2$. By Observation~10.2.3 in~\cite{Bodirsky-HDR-v8}, any two functions in $\Pol(\Q;<)$ which generate the same weak linear order on $\Q^2$ are equivalent with respect to containment in subclones of $\Pol(\Q;<)$. Hence, we may assume the following additional properties for convenience:
\begin{itemize}
\item $\mx(0,0) = 1$ and $\mx(1,0) = 0$,
\item $\mi(0,0) = 0, \mi(1,0) = 1, \mi(0,1)=2, \mi(1,1)=3$,
\item $\ll(0, 0) = 0$, $\ll(1,0) = 1$, $\ll(2,0) = 2$, $\ll(3,0) = 3$ and $\ll(1,1) = 4$.
\end{itemize}
The polymorphisms we presented are connected by the following inclusions (see~\cite{tcsps-journal} or Chapter 12 in~\cite{Book}). For $m \in \set{\min, \mi, \mx}$ and $l \in  \set{\ll, \dll}$ we have 
\begin{align*}
 \overline{\langle \pp, \Aut(\Q) \rangle} & \subseteq  \overline{\langle m, \Aut(\Q) \rangle} ,\\
 \overline{\langle \dpp, \Aut(\Q) \rangle}
 &\subseteq \overline{\langle \textup{dual-}m, \Aut(\Q) \rangle} ,\\
\overline{\langle \lex, \Aut(\Q) \rangle} &\subseteq  \overline{\langle l, \Aut(\Q) \rangle}  .
\end{align*}

\subsubsection{Complexity of Temporal CSPs}
We can now state the complexity dichotomy for temporal CSPs. 

\begin{thmC}[\cite{tcsps-journal}, Theorem~50]\label{thm:bod-kara} Let $\fA$ be a temporal structure with finite signature.
Then one of the following applies:
 	  \begin{enumerate}
 	  	\item $\fA$ is preserved by $\min$, $\mi$, $\mx$, $\ll$, the dual of one of these operations, or by a constant operation. In this case $\CSP(\fA)$, is in P.\label{case:tractableCasesTCL}
 	  	\item $\CSP(\fA)$ is NP-complete.
 	  \end{enumerate}    
 \end{thmC} 
\noindent In our proofs, we also need some intermediate results from~\cite{tcsps-journal}.
In particular, we use the ternary temporal relation introduced in Definition~3 in~\cite{tcsps-journal}:
\begin{displaymath}
T_3 \ceq \set{(x,y,z) \mid x=y <z \vee x=z < y}
\end{displaymath}
$T_3$ is preserved by $\pp$, but by none of the polymorphisms listed in item (1) of Theorem~\ref{thm:bod-kara} and therefore 
$\CSP(\Q;T_3)$ is NP-complete.

\begin{thmC}[\cite{tcsps-journal}, Lemma~36]\label{thm:foundational2}
Let $\fA$ be a first-order expansion of $(\Q;<)$ preserved by $\pp$. 
Then either $T_3$ has a primitive positive definition in $\fA$, or $\fA$ is preserved by 
$\mi$, $\mx$, or $\min$.
\end{thmC}

\subsubsection{Known Syntactic Descriptions of Temporal Relations} 
We also need syntactic descriptions for temporal relations preserved by the operations introduced in the previous sections.

\begin{thmC}[\cite{ToTheMax} (Theorems~4, 5, and 6), \cite{Bodirsky-HDR-v8} (Proposition~10.4.7 and Theorem~10.5.18), and~\cite{tcsps-journal} (observation above Theorem~42)] \label{thm:SNF_base}
A temporal relation $R$ is preserved by
\begin{enumerate}
   \item \label{item:SNF_Pp}
$\pp$ if and only if $R$ can be defined by a conjunction of formulas of the form 
   \begin{align*}
   x_1 \circ_2 x_2 \vee \dots \vee x_1 \circ_n x_n
   \text{ where } \circ_i\in \{\neq,\geq\};
   \end{align*}
    \item \label{item:SNF_Min}
    $\min$ if and only if $R$ can be defined by a conjunction of formulas of the form
    \begin{align*}
    x_{1}\circ_{2}x_{2}  \vee \dots \vee x_{1} \circ_{n} x_{n} \text{ where } \circ_{i}\in \{>,\geq\}.  
    %\label{eq:SNF_Min}
    \end{align*}     
    \item    \label{item:SNF_Mi}
    $\mi$ if and only if $R$ can be defined  by a conjunction of formulas of the form
    \begin{align*}
    x_1 \circ_{2} x_2 \vee  \dots \vee x_1 \circ_n x_n  \text{ where } \circ_i \in \set{\neq, >, \geq} \text{ with at most one } \circ_i \text{ equal to } \geq. %\label{eq:SNF_Mi}
    \end{align*}
   \item \label{item:SNF_Mx}
$\mx$ if and only if $R$ can be defined by a conjunction of $\{<\}$-formulas 
   %\footnote{In \cite{TCLsFPpaper} such clauses are called \emph{Ord-Xor}.}
   $\phi(x_1,\dots, x_{n})$ 
   %where each $R_i$ is an $n_i$-ary temporal relation for which
   for which there exists a homogeneous system $A x=0$ of linear equations over $\GF_2$ 
   such that for every $t \in \mathbb{Q}^{n}$  
   \begin{align*}
   t \text{ satisfies } \phi \text{ if and only if }A \chi(t)=0. 
   \end{align*}
    In this case, there exists a homogeneous system $Ax=0$ of
linear equations over $\GF_2$  with solution space $\chi_0(R)$.
   \item \label{item:SNF_Ll} 
   $\ll$ if and only if $R$ can be defined by a conjunction of formulas of the form
   \begin{align*}
   (x_{1} >  x_{2} \vee \dots \vee x_{1} > x_{m})  \vee (x_{1}= \dots =x_{m}) \vee \bigvee_{m<2i< n} x_{2i} \neq x_{2i+1},
   \end{align*}
       where the clause $x_{1}= \dots =x_{m}$ may be omitted.
\end{enumerate}
\end{thmC}
\noindent Note that the relation $\J$ can equivalently be written as
\begin{displaymath}
 \J = \big \{(a,b,c) \in \mathbb{Q}^{3}\mid 
(a \geq b \vee a>c) \wedge (b \geq a \vee b>c) \}. 
\end{displaymath}
Theorem~\ref{thm:SNF_base} then implies that $\J$ is preserved by $\min$ and $\mi$.
To see that $\J$ is also preserved by $\mx$, note that $\chi_0(\J) = \{(1,1,1),(1,1,0),(0,0,1),(0,0,0)\}$, 
which is the solution space of the linear equation $x_1 = x_2$, and $\J$ contains all triples over ${\mathbb Q}$ whose $\min$-tuple satisfies $x_1 = x_2$. 
%$R^{\mix}$ is the
%$\J(x,y,z)$ is equivalent to 
%$$\exists h (X(z,z,h) \wedge X(x,y,h))$$
%and hence $\J$ has a primitive positive definition in $(\Q;X)$; since $X$ is preserved by $\mx$ (Theorem~\ref{thm:SNF_base}) the same is true for $\J$ (Theorem~\ref{thm:InvPol}).}  

Every temporal relation can be defined by a quantifier-free $\{<\}$-formula $\phi$
%(x_1, \dots, x_n)$ 
and one may assume that $\phi$ is written in \emph{conjunctive normal form (CNF)} 
$$\bigwedge_{\ell=1}^{k}  \bigvee_{i\in I_\ell}\phi_{\ell,i}$$ 
where $\phi_{\ell,i}$ is an atomic $\{<\}$-formula. 
 We say that $\phi$ is in \emph{reduced CNF} if we cannot remove any disjunct $\phi_{\ell,i}$ from $\phi$ without altering the defined relation. If $\phi$ is in reduced CNF, then for any $\ell\in [k]$ and $i \in I_\ell$ there exists $t\in R$ that satisfies 
 $\phi_{\ell,i}$ and does not satisfy any other 
disjunct $\phi_{\ell,j}$ for $j \in I_{\ell} \setminus \{i\}$. We use the symbols $\leq,\neq,\geq,>$ as the usual shortcuts, for $x < y \vee x = y$, etc. 
Clearly, every formula is equivalent to a formula
in reduced CNF. Remarkably, the syntactic form
in \ref{item:SNF_Min} is preserved by removing literals; hence, in \ref{item:SNF_Min} we may assume without loss of generality that the definition of $R$ is additionally reduced.

%%%%%%%%%%%%%%%%%%%%%%%%%%%%%%%%%%%%%%%%%%%%%%%%%%%%%

\subsection{Known Relational Generating Sets}
Many important temporal structures $\fA$ can also be described elegantly and concisely by specifying a finite set of temporal relations such that the temporal relations of $\fA$ are precisely those that have a primitive positive definition in $\fA$. Note that such a finite set might not exist even if $\fA$ contains all relations that are primitively positively definable in $\fA$. We need such a result for the temporal structure that contains all temporal relations preserved by $\pp$.

\begin{thmC}[\cite{Book}, Theorem 12.7.4]\label{thm:gen-pp}
A temporal relation is preserved by $\pp$ if and only if it has a primitive positive
definition in $(\Q;\neq,R^{\min}_{\leq},S^{\mi})$ where
\begin{align*}
R^{\min}_{\leq} & \ceq \set{(x,y,z) \in \Q^3 \mid x \geq y \OR x\geq z}\quad \text{ and }  \\
S^{\mi} & \ceq \set{(x,y,z) \in \Q^3 \mid x \neq y \vee x \geq z}. 
\end{align*}
\end{thmC}

\section{Polynomial-Time Tractable Combinations}\label{sec:ComplexityResults}
The following definition already appeared in~\cite{Book} and~\cite{BroxvallJonssonRenz} and is closely related to the convexity condition of Nelson and Oppen. The key difference to convexity of $T$ is that we consider  conjunctions of atomic formulas instead of conjunctions of literals.

\begin{defi}\label{def:independence}
 Let $T$ be a $\tau$-theory. We say that \emph{$\neq$ is independent from $T$} if for any conjunction of atomic $\tau$-formulas $\phi$ the formula $\phi \AND \bigwedge_{i=1}^k x_i \neq y_i$ is satisfiable in some model of $T$ whenever the formula $\phi \AND x_i \neq y_i$ is satisfiable in some model of $T$ for every $i\in [k]$. 
\end{defi}

The following is easy to see (see, e.g.,~\cite{Book}). 
\begin{prop}\label{thm:indep}
For every structure $\fA$ with a binary injective polymorphism, $\neq$ is independent from $\Th(\fA)$. 
\end{prop}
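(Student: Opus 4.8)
\textbf{Proof plan for Proposition~\ref{thm:indep}.}
The plan is to argue directly from the definition of independence of $\neq$ (Definition~\ref{def:independence}), using the binary injective polymorphism $f$ of $\fA$ to merge finitely many witnessing assignments into a single one. Suppose $\phi$ is a conjunction of atomic $\tau$-formulas and that for each $i \in [k]$ the formula $\phi \AND x_i \neq y_i$ is satisfiable in some model of $\Th(\fA)$; since $\fA \models \Th(\fA)$ and this is a CSP-type statement, satisfiability in a model of $\Th(\fA)$ is equivalent to satisfiability in $\fA$ itself, so for each $i$ we obtain an assignment $s_i \colon V \to A$ (where $V$ is the set of variables of $\phi$) satisfying every conjunct of $\phi$ in $\fA$ and additionally $s_i(x_i) \neq s_i(y_i)$.

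The key step is to build a single assignment $s \colon V \to A$ witnessing $\phi \AND \bigwedge_{i=1}^{k} x_i \neq y_i$ by folding the $s_i$ together with $f$. For $k = 1$ we are already done with $s = s_1$; for $k \geq 2$ I would proceed by induction, so it suffices to handle $k = 2$ and then iterate. Given $s_1, s_2$, define $s(v) \ceq f(s_1(v), s_2(v))$ for every $v \in V$. Because $f$ is a polymorphism of $\fA$, for each atomic conjunct $R(z_1,\dots,z_m)$ of $\phi$ we have $(s_1(z_1),\dots,s_1(z_m)) \in R$ and $(s_2(z_1),\dots,s_2(z_m)) \in R$, hence applying $f$ componentwise gives $(s(z_1),\dots,s(z_m)) \in R$; the same argument applies to equality conjuncts $z = z'$, which $f$ trivially preserves, so $s$ satisfies all of $\phi$. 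It remains to check the disequalities: for $i \in \{1,2\}$ we know $s_i(x_i) \neq s_i(y_i)$, and since $f$ is injective in each argument (this is what "binary injective" means here), $s(x_i) = f(s_1(x_i), s_2(x_i))$ and $s(y_i) = f(s_1(y_i), s_2(y_i))$ differ: if $s_1(x_i) \neq s_1(y_i)$, injectivity of $f$ in the first coordinate forces $s(x_i) \neq s(y_i)$, and symmetrically for the second coordinate. Thus $s$ witnesses $\phi \AND x_1 \neq y_1 \AND x_2 \neq y_2$ in $\fA$, hence in a model of $\Th(\fA)$.

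Iterating this merge over $i = 1, \dots, k$ (at each stage combining the current assignment, which already satisfies the disequalities handled so far, with $s_{i+1}$, and noting that injectivity preserves all previously established disequalities as well as the new one) yields an assignment satisfying $\phi \AND \bigwedge_{i=1}^{k} x_i \neq y_i$ in $\fA$, and therefore $\neq$ is independent from $\Th(\fA)$. There is essentially no obstacle here; the only point requiring a word of care is the passage between "satisfiable in some model of $\Th(\fA)$" and "satisfiable in $\fA$", which holds because the relevant formulas are (positive-primitive-style) existential conjunctions of atoms and disequations and $\fA$ is itself a model of $\Th(\fA)$, so any such formula satisfiable in some model is witnessed by an elementary substructure argument, or more simply because for quantifier-free conjunctions of literals over a single relational structure, satisfiability in $\fA$ and in an arbitrary model of $\Th(\fA)$ coincide by $\omega$-categoricity when $\fA$ is $\omega$-categorical, and in general one takes the witnesses to live in $\fA$ after noting that $\Th(\fA)$ proves the existential closure.
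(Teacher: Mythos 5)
The paper gives no proof of this proposition (it just cites \cite{Book}), and your argument is the standard one: pull the witnessing assignments into $\fA$ itself and fold them together with the binary injective polymorphism. The substance is correct, including the inductive merge and the observation that $f$, being a polymorphism, preserves all atomic conjuncts (relational and equality). One caveat: \enquote{binary injective} here means that $f\colon A^2\to A$ is injective \emph{as a map on pairs}, not merely \enquote{injective in each argument} as your parenthetical says --- the latter is strictly weaker (e.g.\ $(x,y)\mapsto x+y$ on $\Q$ is injective in each argument but identifies $(0,1)$ and $(1,0)$, and would break your disequality step when both coordinates differ). With the correct reading the step is immediate and needs no case split: $(s_1(x_i),s_2(x_i))\neq(s_1(y_i),s_2(y_i))$ because they differ in some coordinate, so their $f$-images differ. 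Finally, the passage between satisfiability in some model of $\Th(\fA)$ and in $\fA$ itself needs no $\omega$-categoricity or elementary-substructure argument: $\Th(\fA)$ is a complete theory, so an existential sentence holding in one model holds in all of them, in particular in $\fA$.
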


Nelson and Oppen require that both theories are stably infinite. We will make a weaker assumption captured by the following notion.

\begin{defi}
Let $T_1$ and $T_2$ be theories with signatures $\tau_1$ and $\tau_2$, respectively.
We say that $T_1$ and $T_2$ are \emph{cardinality compatible} if for all for $i\in [2]$ and all conjunctions $\phi_i(x_1,\dots,x_n)$ of atomic $\tau_i$-formulas, such that $\set{\exists x_1, \dots, x_n. \, \phi_i} \cup T_i$ has a model, there are models of $\set{\exists x_1, \dots, x_n. \, \phi_1} \cup T_1$ and $\set{\exists x_1, \dots, x_n. \, \phi_2} \cup T_2$ of equal cardinality. 
\end{defi}

Clearly, if $T_1$ and $T_2$ are stably infinite,  
then they are also cardinality compatible. 
Contrary to stably infinite theories where we require that we can choose the cardinality of the models to be countably infinite, the definition of cardinality compatibility also allows theories with finite models only. We also allow theories where some formulas are only satisfiable in finite models while others have infinite models,
as the following example shows. 

\begin{exa} 
Let $T$ be the theory $\set{\forall x,y \, ( \neg Q(x) \OR x = y)}$ whose signature only contains the unary relation symbol $Q$. There is an infinite model for $T$ where $Q$ is empty. However, if $\phi$ is the formula $Q(x)$, then all models for $T \cup \set{\phi}$ have exactly one element and this element is contained in $Q$. Hence, $T$ is not stably infinite, but cardinality compatible with itself. 
\end{exa}

The sufficient conditions for polynomial-time tractability of $\CSP(T_1 \cup T_2)$ given in the following theorem are slightly weaker than those by Nelson and Oppen.

\begin{thm}\label{thm:P}
Let $T_1$ and $T_2$ be cardinality compatible theories with finite, disjoint relational signatures and polynomial-time tractable CSPs. If $\neq$ is independent from both $T_1$ and $T_2$ and $\neq$ has an ep-definition in both $T_1$ and $T_2$, then $\CSP(T_1 \cup T_2)$ is polynomial-time tractable.
\end{thm}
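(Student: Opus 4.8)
The plan is to adapt the classical Nelson--Oppen argument, replacing the use of convexity by independence of $\neq$ and the use of stable infiniteness by cardinality compatibility. Given an instance $\phi_1 \AND \phi_2$ of $\CSP(T_1 \cup T_2)$, where $\phi_i$ is a conjunction of atomic $\tau_i$-formulas on the common variable set $V = \{x_1,\dots,x_n\}$, first I would recall (as observed in Section~\ref{sec:genericCombinations}) that $\phi_1 \AND \phi_2$ is satisfiable in a model of $T_1 \cup T_2$ if and only if there is a partition $P$ of $V$ such that, for $i=1,2$, the formula $\phi_i$ together with the equalities \emph{within} each block of $P$ and the disequalities \emph{between} distinct blocks of $P$ is satisfiable in a model of $T_i$, and moreover these two models can be chosen of equal cardinality. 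Cardinality compatibility is exactly what guarantees the last clause, since the quantified versions of these augmented formulas are again (equivalent to) conjunctions of atomic formulas once the disequalities are eliminated via the ep-definition of $\neq$.

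The algorithm is then the standard one: guess (or, since this must run in polynomial time, propagate) an equivalence relation on $V$. Concretely, I would run the following loop. Maintain a partition $P$ of $V$, initially the finest one. For each $i \in \{1,2\}$, use the polynomial-time decision procedure for $\CSP(T_i)$ to test, for every pair $u,v$ in distinct blocks of $P$, whether $\phi_i \AND E_P \AND (u \neq v)$ is satisfiable in a model of $T_i$, where $E_P$ is the conjunction of equalities forced by $P$ and the disequality $u \neq v$ is rewritten using its ep-definition in $T_i$ — noting that an ep-definition is a disjunction of pp-formulas, so satisfiability of $\phi_i \AND E_P \AND (u\neq v)$ reduces to a polynomial number of calls to $\CSP(T_i)$, one per disjunct. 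If for some pair $u,v$ in distinct blocks this fails for some $i$, i.e. $\phi_i \AND E_P$ forces $u = v$, merge the blocks of $u$ and $v$ and repeat; this can happen at most $n-1$ times. When the loop stabilises, accept iff $\phi_i \AND E_P$ (with no disequalities) is satisfiable in a model of $T_i$ for both $i$.

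For correctness, completeness is the easy direction: if $\phi_1 \AND \phi_2$ is satisfiable, take a satisfying assignment, let $P^\star$ be the partition it induces on $V$; one checks by induction that the partition $P$ maintained by the algorithm is always a refinement of $P^\star$ (a merge is performed only when equality is genuinely forced), so the final $P$ equals $P^\star$ and both final satisfiability tests succeed. Soundness is where independence of $\neq$ enters: when the loop stabilises with partition $P$ and both final tests succeed, we know that for each pair $u,v$ in distinct blocks and each $i$, $\phi_i \AND E_P \AND (u\neq v)$ is satisfiable in a model of $T_i$; since $\neq$ is independent from $T_i$, the single formula $\phi_i \AND E_P \AND \bigwedge_{u \not\sim_P v} (u \neq v)$ is satisfiable in a model of $T_i$. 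Finally, cardinality compatibility lets us pick the two witnessing models of equal cardinality, and then the characterisation from Section~\ref{sec:genericCombinations} (identify the two domains along a bijection respecting $P$) yields a model of $T_1 \cup T_2$ satisfying $\phi_1 \AND \phi_2$.

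The main obstacle is bookkeeping rather than a deep idea: one must be careful that replacing each $\neq$ by its ep-definition and then splitting over the disjuncts of those ep-definitions only multiplies the number of $\CSP(T_i)$-calls by a polynomial factor (the ep-definition is a fixed finite disjunction depending only on $T_i$, so this is fine), and that the disequalities we need to add in the soundness step — potentially quadratically many — can indeed be absorbed into a single atomic-formula instance via the ep-definition of $\neq$, so that independence of $\neq$ (which is stated for conjunctions of atomic formulas) genuinely applies. Once these two points are handled, the polynomial running time is immediate: $O(n^2)$ iterations of the outer structure, each making $O(n^2)$ calls to the (polynomial-time) decision procedures for $\CSP(T_1)$ and $\CSP(T_2)$.
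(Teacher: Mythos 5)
Your proposal is correct and follows essentially the same route as the paper's proof: propagate forced equalities (your block merges correspond to the paper's variable substitutions), test each residual disequality via the disjuncts of the ep-definition of $\neq$ using the $\CSP(T_i)$ oracles, invoke independence of $\neq$ to get a single injective solution per theory, and use cardinality compatibility plus a bijection of domains to amalgamate the two models. The only cosmetic difference is your slightly looser iteration count ($O(n^2)$ rounds versus the at most $n-1$ merges the paper uses for its $O(n^3)$ bound), which does not affect polynomial-time tractability.
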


\begin{proof}
  Let $\tau_1$ and $\tau_2$ be the signatures of $T_1$ and $T_2$, respectively. Let $S$ be a set of atomic $\tau_1 \cup \tau_2$-formulas with free variables among $x_1, \dots, x_n$. Then we may partition $S$ into $S_1$ and $S_2$ such that $S_i$ is a set of $\tau_i$-formulas and $S= S_1 \cup S_2$. Without loss of generality, we may assume that all variables occur in both $S_1$ and $S_2$ (this can also be attained by introduction of dummy constraints like $x= x$).
  Let $\phi_i(x,y)$ be an existential positive definition of $x\neq y$ in $T_i$ for $i\in\{1,2\}$.
  %, 
For $i=1$ and $i=2$ and for each tuple of variables $(x_k,x_l)$ and each disjunct $D( \cdot\,, \cdot)$ in $\phi_i$ we test whether $S_i \cup \set{D(x_k,x_l)}$ is satisfiable is some model of $T_i$. 
If, for a fixed tuple $(x_k,x_l)$, the answer is \emph{`unsatisfiable'} for all disjuncts of $\phi_i$, then we replace all occurrences of $x_l$ in $S_1$ and in $S_2$ by $x_k$. 
We iterate this procedure until no more replacements are made.
If $S_1$ or $S_2$ is unsatisfiable in all models of $T_1, T_2$ respectively thereafter, we return \emph{`unsatisfiable'}. Otherwise, we return \emph{`satisfiable'}.

To prove that this algorithm is correct, notice that if $S_i \cup \set{D(x_k,x_l)}$ is unsatisfiable for all disjuncts $D$ of $\phi_i$, then clearly $S_i \cup \set{x_k\neq x_l}$ is not satisfiable. Moreover, if $S_1$ or $S_2$ is unsatisfiable, then their union is unsatisfiable as well. Hence, the substitutions done by the algorithm do not change the satisfiability of $S_1\cup S_2$ in models of $T_1 \cup T_2$. Let us therefore assume that after the substitution process both $S_1$ and $S_2$ are satisfiable in some model of $T_1$ and $T_2$,  respectively. 
Without loss of generality we may assume that the variables $x_1, \dots, x_m$ remain in $S_1$ and in $S_2$. Furthermore, we know that $S_i \cup \set{x_k \neq x_l}$ is satisfiable for all $k \neq l$ with $k,l \leq m$ and both $i\in \set{1,2}$. 
Therefore, $S_i \cup \bigcup_{k\neq l} \set{x_k \neq x_l}$ is satisfiable in some model of $T_i$, because $\neq$ is independent from $T_i$, i.e., there exists $\fM_i \models T_i$ and an injective assignment $s_i \colon \set{x_1, \dots, x_m} \rightarrow \fM_i$ such that $\fM_i \models \bigwedge_{\sigma \in S_i}\sigma (s_i)$, 
where $\sigma(s_i)$ denotes $\sigma(s_i(y_1), \dots, s_i(y_k))$ and $y_1, \dots, y_k$ denote the variables in $\sigma$.
By the cardinality compatibility of $T_1$ and $T_2$, we may assume that $\fM_1$ and $\fM_2$ have the same cardinality. Therefore, there exists a bijection $f\colon M_1 \rightarrow M_2$ between their domains such that $f(s_1(x_k)) = s_2(x_k)$ for all $k\in [m]$. 
With this bijection we define a $\tau_1 \cup \tau_2$ structure $\fM$ which is a model of $T_1 \cup T_2$ via $R^\fM \coloneqq R^{\fM_1}$ for $R\in \tau_1$, and $a \in R^\fM$ if and only if $f(a) \in R^{\fM_2}$ for $R\in \tau_2$. This is well-defined, because the signatures of $T_1$ and $T_2$ are disjoint and because $s_1$ and $s_2$ are both injective. It is easy to verify that $\fM$ is a model of $T_1 \cup T_2$ and $\fM \models \bigwedge_{\sigma \in S_1} \sigma(s_1) \AND \bigwedge_{\sigma \in S_2} \sigma(s_1)$ and hence, the original instance is satisfiable.

The number of calls to the decision procedures for $T_1$ and $T_2$ is bounded by the number of pairs $(x_k, x_l)$ multiplied by the maximal number of rounds of substitutions and the number of disjuncts in $\phi_1 $ and $\phi_2$. Hence, the runtime of the algorithm is in $O(n^3)$.
\end{proof}

Notice that the tractability result by Nelson and Oppen can be obtained as a special case of Theorem~\ref{thm:P} when we consider theories which are stably infinite and where the set of atomic formulas is closed under negation. 
The following example shows that our condition covers strictly more cases already for combinations of temporal CSPs. 

\begin{exa}
For $i=1$ and $i=2$, let $({\mathbb Q};<_i,\leq_i)$ 
be a structure where $<_i$ denotes the usual strict linear order on the rational numbers, and $\leq_i$ denotes the corresponding weak linear order. 
Let $T_i \ceq \Th({\mathbb Q};<_i,\leq_i)$. 
Note that the relation $\neq$ does not have a primitive positive definition in $({\mathbb Q};<_i,\leq_i)$; however, it has the existential positive definition $x<_1 y \vee y <_1 x$. It is well-known that $\CSP({\mathbb Q};<_i,\leq_i)$ can be solved in polynomial time~\cite{PointAlgebra} and that $\neq$ is independent from $T_i$~\cite{BroxvallJonssonRenz}. 
Then $T_1$ and $T_2$ satisfy the conditions from Theorem~\ref{thm:P} but do not satisfy the conditions of Nelson and Oppen. 
\end{exa}
    
\section{The Operation \texorpdfstring{$\mix$}{mix}}
\label{sec:mix}
A certain temporal structure plays an important role in our proof; it contains the set of all temporal relations preserved by an operation, which we call $\mix$, and which is similar to the polymorphisms $\mi$ and $\mx$. We also present an equivalent description of these relations
in terms of syntactically restricted quantifier-free $\{<\}$-formulas (Theorem~\ref{thm:SNF_Mix}). 

\begin{defi}\label{def:mix}
Let $\alpha,\beta,\gamma$ be endomorphisms of $(\mathbb{Q};<)$ such that $\gamma(a)<\alpha(a)<\beta(a)<\gamma(a+\epsilon)$ for every $a, \epsilon\in \mathbb{Q}$ with $\epsilon >0$.
 Then $\mix$ is the binary operation on $\Q$ defined by
 \begin{displaymath}
   \mix(x,y)\ceq \left\{ \begin{array}{ll}  \alpha(x) & \text{ if } x < y, \\
                       \beta(x) & \text{ if } x=y, \\
                       \gamma(y) & \text{ if } x > y.
                     \end{array}  \right. 
 \end{displaymath}
 \end{defi}

In analogy to our convention in the case of the operations mi, mx, and ll, we fix some concrete values for the operation mix.
We claim that the endomorphisms $\alpha$, $\beta$, and $\gamma$ from the definition of mix can be chosen so that $\gamma(x)=3x$, $\alpha(x)=3x+1$, and $\beta(x)=3x+2$
for every $x\in \mathbb{Z}^{+}$. Figure~\ref{fig:mix} shows some values for $\mix$. 
For every $k\in \mathbb{Z}^+$, we define $\gamma_k,$ $\alpha_k$, and $\beta_k$ inductively as follows.
In the \emph{base case} $k=0$, we set $\alpha_0\coloneqq \delta_0 \circ \alpha$, $\beta_0\coloneqq \delta_0 \circ \beta$, and $\gamma_0\coloneqq \delta_0 \circ \gamma$, where $\alpha, \beta , \gamma$ are arbitrary operations satisfying the requirements in Definition~24 and $\delta_0$ is an automorphism of $(\mathbb{Q};<)$ such that 
$(\delta_0\circ \gamma)(0)=0 $, $(\delta_0\circ \alpha)(0)=1$, and $(\delta_0\circ \beta)(0)=2$.
Such $\delta_0$ exists because $(\mathbb{Q};<)$ is homogeneous and $\gamma(0)<\alpha(0)<\beta(0)$.
In the \emph{induction step} $k\to k+1$ we assume that, for every integer $0\leq \ell \leq k$, the endomorphisms  $\alpha_{\ell}$, $\beta_{\ell}$, and $\gamma_{\ell}$ of $(\mathbb{Q};<)$ are already defined and satisfy:
\begin{enumerate}
\item the requirements in Definition~24;
\item $\gamma_{\ell}(\ell)=3\ell$, $\alpha_{\ell}(\ell)=3\ell+1$, and $\beta_{\ell}(\ell)=3\ell+2$;
\item if $\ell>0$, then $\alpha_{\ell},$ $\beta_{\ell}$, and $\gamma_{\ell}$ take the same values as $\alpha_{\ell-1},$ $\beta_{\ell-1}$, and $\gamma_{\ell-1}$ on $(-\infty,\ell-1]$, respectively.
\end{enumerate}
We set $\alpha_{k+1}\coloneqq \delta_{k+1} \circ \alpha_k$, $\beta_{k+1}\coloneqq \delta_{k+1} \circ \beta_k$, and $\gamma_{k+1}\coloneqq \delta_{k+1} \circ \gamma_k$,
where $\delta_{k+1}$ is the identity map on $(-\infty,3k+2]$ and otherwise a piecewise affine transformation sending 
\begin{itemize}
\item $[3k+2, \gamma_k(k+1)]$ to $[3k+2,3(k+1)]$, 
\item $[\gamma_k(k+1), \alpha_k(k+1)]$ to $[3(k+1),3(k+1)+1]$, 
\item $[\alpha_k(k+1), \beta_k(k+1)]$ to $[3(k+1)+1,3(k+1)+2]$, and 
\item $[\beta_k(k+1), \infty)$ to $[3(k+1)+2,\infty)$.
\end{itemize}
Such $\delta_{k+1}$ is clearly an automorphism of $(\mathbb{Q};<)$ and $\alpha_{k+1}$, $\beta_{k+1}$, and $\gamma_{k+1}$ satisfy the items~1-3.\ from above.
The sequences $(\alpha_k)$, $(\beta_k)$, and $(\gamma_k)$ converge pointwise to endomorphisms $\alpha, \beta, \gamma$ of $(\mathbb{Q};<)$ with the desired properties.

% Analogously to the definitions of $\mi$ and $\mx$ we may without loss of generality fix some concrete values of $\alpha$, $\beta$, and $\gamma$. It is convenient to choose $\gamma \colon a \mapsto 3a, \alpha \colon a \mapsto 3a+1, \beta \colon a \mapsto 3a+2$ for $a\in \Z^+$. 

 \begin{figure}
\begin{center}
 \begin{tikzpicture}[scale=0.6]
 \draw[->,-latex] (-1.5,-1) -- (3.8,-1);  %bis 3
 \draw[->,-latex] (-1, -1.5) -- (-1,3.8);
 \foreach \i in {0,1,2,3} {
    \cxmarker{\i}{-1}{\i};    
    \cymarker{-1}{\i}{\i};
 }
 \node at (0,0) {2};
 \node at (1,0) {0};
 \node at (2,0) {0};
 \node at (3,0) {0};
 \node at (0,1) {1};
 \node at (0,2) {1};
 \node at (0,3) {1};
 \node at (1,1) {5};
 \node at (2,1) {3};
 \node at (3,1) {3}; 
 \node at (1,2) {4};
 \node at (1,3) {4};
 \node at (2,2) {8};
 \node at (3,2) {6};
 \node at (2,3) {7};
 \node at (3,3) {11};
\end{tikzpicture}
\end{center}
\caption{The image of $\mix$ on $\set{0,1,2,3}^2$.}
\label{fig:mix}
\end{figure}
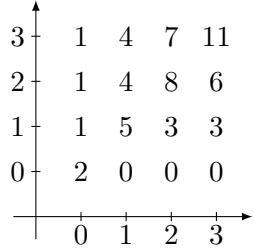

\begin{lem}\label{thm:mixGeneratesMi}
 The locally closed clone generated by $\mix$ and $\Aut(\Q;<)$ contains $\mi$.
\end{lem}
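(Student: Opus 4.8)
The plan is to prove the statement by local interpolation. Since $\overline{\langle\{\mix\}\cup\Aut(\Q;<)\rangle}$ is locally closed, it suffices to exhibit, for every finite $S\subseteq\Q$, a term operation $f_S$ built from $\mix$ and members of $\Aut(\Q;<)$ with $f_S|_{S^2}=\mi|_{S^2}$; and since $(\Q;<)$ is homogeneous it is in fact enough to produce some $f_S\in\langle\{\mix\}\cup\Aut(\Q;<)\rangle$ whose restriction to $S^2$ has the same \emph{comparison pattern} as $\mi|_{S^2}$, i.e.\ agrees with it on all equalities and on all strict inequalities between output values. Indeed, the assignment $f_S(x,y)\mapsto\mi(x,y)$ is then a well-defined order-isomorphism between the finite sets $f_S(S^2)$ and $\mi(S^2)$, which extends to some $\theta\in\Aut(\Q;<)$, and $\theta\circ f_S$ agrees with $\mi$ on $S^2$.

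The point is that $\mix$ is very rigid: the value $\mix(u,v)$ equals $\alpha(\min(u,v))$ if $u<v$, equals $\beta(\min(u,v))$ if $u=v$, and equals $\gamma(\min(u,v))$ if $u>v$, so it is determined by $\min(u,v)$ together with which of the three cases holds; and from the defining inequalities $\gamma(a)<\alpha(a)<\beta(a)<\gamma(a+\epsilon)$ one reads off that $\mix(u,v)<\mix(u',v')$ holds precisely when either $\min(u,v)<\min(u',v')$, or $\min(u,v)=\min(u',v')$ and the case of $(u,v)$ is earlier than the case of $(u',v')$ in the order: $u>v$, then $u<v$, then $u=v$. The operation $\mi$ has exactly the same description, except that its within-block order of cases is: $x=y$, then $x>y$, then $x<y$. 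So the task is to ``rotate'' the three within-block cases of $\mix$ so that the diagonal case $x=y$ moves to the bottom.

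Concretely, given a finite $S\subseteq\Q$ I would choose $\sigma\in\Aut(\Q;<)$ with $\sigma(s)>\max S$ for every $s\in S$ (such $\sigma$ exists by homogeneity) and set $f_S(x,y)\ceq\mix\bigl(\mix(x,y),\ \mix(\sigma(x),y)\bigr)$. A short case analysis, using only the inequalities above, then shows that for $x,y\in S$, writing $m\ceq\min(x,y)$: if $x=y$ then $f_S(x,y)=\gamma(\gamma(m))$, which as a value of the outer $\mix$ sits at position $(\gamma(m),\, u>v)$; if $x>y$ then $f_S(x,y)=\beta(\gamma(m))$, at position $(\gamma(m),\, u=v)$; and if $x<y$ then $f_S(x,y)=\alpha(\alpha(m))$, at position $(\alpha(m),\, u<v)$. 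In particular $f_S$ is constant on each of the three sets $\{(m,m)\}$, $\{(x,m):x\in S,\ x>m\}$, $\{(m,y):y\in S,\ y>m\}$ — which are exactly the sets on which $\mi|_{S^2}$ is constant — and, since $\gamma(m)<\alpha(m)<\gamma(m')$ whenever $m<m'$ in $S$, the comparison rule for $\mix$ recorded above shows that these values increase precisely along the classes $[=,m]\prec[>,m]\prec[<,m]\prec[=,m']\prec\cdots$, which is the comparison pattern of $\mi|_{S^2}$. Composing with an automorphism as in the first paragraph, and letting $S$ range over all finite subsets of $\Q$, then gives $\mi\in\overline{\langle\{\mix\}\cup\Aut(\Q;<)\rangle}$.

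I expect the only genuine difficulty to be discovering the term $f_S$. The obvious symmetric candidates, such as $\mix(\mix(x,y),\mix(y,x))$, merely transpose two of the three within-block cases and always leave the diagonal case at the \emph{top}: whenever $x=y$ the two inner copies of $\mix$ take equal values, so the outer $\mix$ returns its ``$u=v$''-value, which lies above the values returned in the other two cases. Applying the automorphism $\sigma$ to one argument of the inner $\mix$ is exactly what destroys this symmetry and reroutes the diagonal case to the bottom while keeping the other two cases separate and correctly ordered; verifying that this single perturbation simultaneously preserves constancy on each $\mi$-class and produces $\mi$'s exact inter-class order — for which the block/level bookkeeping encapsulated in the second paragraph is used — is the heart of the argument.
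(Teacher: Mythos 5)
Your proof is correct and takes essentially the same route as the paper: the paper's term is $\mix(\mix(x,y),3y)$, where $3y$ plays exactly the role of $\gamma(y)$ that your inner $\mix(\sigma(x),y)$ computes on $S^2$, so both arguments reduce to interpolating $\mix(\mix(x,y),\gamma(y))$ against $\mi$ via local closure and homogeneity. The only difference is that you spell out the block/case verification that the paper dismisses as ``easy to check''.
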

\begin{proof}
It is easy to check that $f(x,y) \ceq \mix(\mix(x,y), 3y)$ induces the same linear order as $\mi(x,y)$ on $(\Z^+)^2$. Hence, for any finite set $S \subseteq \Q$ there exist $\alpha, \beta, \gamma \in \Aut(\Q;<)$ such that $\alpha f(\beta(x), \gamma(y))|_{S^2} = \mi(x,y)|_{S^2}$. Then, by definition, $\mi \in \overline{\langle \mix, \Aut(\Q;<) \rangle}$.
\end{proof}

The relation $\J$ has the generalisation $\J_n$ of arity $n \geq 3$ defined as follows. 
 \begin{align}
   \J_n & \ceq \big \{(a_1,\dots,a_n) \in {\mathbb Q}^n \mid \min(a_3,\dots,a_n) \geq \min(a_1,a_2) \Rightarrow a_1=a_2 \big \} 
   \label{eq:Jn-def} 
 \end{align}
 Note that $\J_n(x_1,\dots,x_n)$ has the
 following definition in CNF 
$$\phi^{\mix}_n(x_1,\dots,x_n) \ceq \big ( x_1 \geq x_2 \OR \bigvee_{i\in \{3,\dots,n\}} x_1 > x_i ) \AND (x_2 \geq x_1 \OR \bigvee_{i\in \{3,\dots,n\}} x_2 > x_i \big \}$$
which is both of the form described in item~\ref{item:SNF_Min} and of the form described in~\ref{item:SNF_Mi} in Theorem~\ref{thm:SNF_base}. Hence, $\J_n$ is preserved by $\min$ and by $\mi$. 
Also note that $\J = \J_3$ and that $\J(a,b,c)$ is equivalent to $R^{\mi}(a,b,c) \wedge R^{\mi}(b,a,c)$ where 
  $$R^{\mi} \ceq \big \{(a,b,c)\in \mathbb{Q}^{3} \mid a \geq b \vee a > c \big \}.$$
The relation $\J_n$ is also preserved by $\mx$;
we first prove this for $\J_3$. 

%\begin{lem}\label{lem:XdefJ}
%The relation $\J$ is primitively positively definable in $({\mathbb Q};X)$ and hence is preserved by $\mx$. 
%.
%\end{lem}
%\begin{proof}
%It is easy to check that $\exists h \big(X(z,z,h) \AND X(x,y,h) \big)$  primitively positively defines $\J(x,y,z)$. The second part of the statement then follows from 
%Theorem~\ref{thm:mixedmx} and 
%Theorem~\ref{thm:InvPol}. 
%\end{proof}

\begin{lem}\label{thm:JgeneratesLongJ}
For every $n \geq 3$, the relation $\J_n$ 
%and $R^{\min}_n$ 
has a primitive positive definition in $(\Q;<,\J)$. 
\end{lem}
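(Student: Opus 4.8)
The plan is to prove the statement by induction on $n$ (and, as a byproduct, to observe that $<$ is not actually needed: $\J_n$ will have a primitive positive definition in $(\Q;\J)$ already). The base case $n=3$ is trivial, since $\J_3=\J$.

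For the induction step, assume $n\geq 3$ and that $\J_n$ has a primitive positive definition in $(\Q;<,\J)$. The key claim is the equivalence
\[
\J_{n+1}(x_1,\dots,x_{n+1}) \;\Longleftrightarrow\; \exists z\,\bigl(\J_n(x_1,x_2,x_3,\dots,x_{n-1},z)\AND \J(z,x_n,x_{n+1})\bigr),
\]
where for $n=3$ the block $x_3,\dots,x_{n-1}$ is empty, so that $\J_n(x_1,x_2,\dots,z)$ reads $\J(x_1,x_2,z)$. Substituting the primitive positive definition of $\J_n$ given by the induction hypothesis into the right-hand side then yields a primitive positive definition of $\J_{n+1}$. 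Iterating the recursion, $\J_n$ is thereby defined (schematically) by a chain of ternary $\J$-constraints linking fresh auxiliary variables: $\J(x_1,x_2,z_1)\AND\J(z_1,x_3,z_2)\AND\dots\AND\J(z_{n-3},x_{n-1},x_n)$. The intuition is that the existential variable $z$ (resp.\ the last link of the chain) acts as a proxy for $\min(x_n,x_{n+1})$: the conjunct $\J(z,x_n,x_{n+1})$ controls how far down $z$ may be forced, while $\J_n(x_1,x_2,x_3,\dots,x_{n-1},z)$ is simply $\J_{n+1}$ with the pair $x_n,x_{n+1}$ collapsed to this proxy.

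To verify the equivalence I would split on whether $x_1=x_2$. If $x_1=x_2$, both sides hold: the left-hand side by definition of $\J_{n+1}$, and the right-hand side because $\J_n(x_1,x_2,\dots,z)$ then holds for every $z$ and the choice $z\ceq x_n$ satisfies $\J(z,x_n,x_{n+1})$. If $x_1\neq x_2$, write $m\ceq\min(x_1,x_2)$ and $p\ceq\min(x_3,\dots,x_{n-1})$ (read $p$ as being above everything when $n=3$). If $p<m$, both sides hold again, since $\min(x_3,\dots,x_{n+1})\leq p<m$ and, on the right, $z\ceq x_n$ works. If $p\geq m$, then $\J_n(x_1,x_2,x_3,\dots,x_{n-1},z)$ holds iff $z<m$, so the right-hand side becomes $\exists z<m\colon \J(z,x_n,x_{n+1})$; a short argument using the density of $(\Q;<)$ shows this is equivalent to $x_n<m \OR (x_{n+1}<x_n\AND x_{n+1}<m)$, which in turn is equivalent to $\min(x_n,x_{n+1})<m$, hence (using $p\geq m$) to $\min(x_3,\dots,x_{n+1})<m$, which is exactly $\J_{n+1}(x_1,\dots,x_{n+1})$ in this case.

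I do not expect a real obstacle in carrying this out: once the chained formula is found, the remaining verification is the elementary case analysis above. The genuinely non-obvious step is finding that formula, since the two most natural attempts both fail: recursively rewriting $\min(x_3,\dots,x_{n+1})<m$ as $\min(x_3,\dots,x_n)<m \OR x_{n+1}<m$ stays disjunctive, and introducing a single existential "witness of the minimum of $x_3,\dots,x_n$" forces one to express a relation of the form "$z\geq\min(x_3,\dots,x_n)$", which is not primitively positively definable in $(\Q;<,\J)$ (already its binary instance $z\geq c$ is not preserved by $\mx$, which is a polymorphism of $(\Q;<,\J)$). Routing the auxiliary variable through a chain of $\J$'s avoids both difficulties.
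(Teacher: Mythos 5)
Your proof is correct, and it follows the same overall strategy as the paper's --- induction on $n$, eliminating one coordinate per step by introducing a single existentially quantified helper variable together with one additional ternary $\J$-atom --- but with a genuinely different decomposition. The paper writes $\J_n(x_1,\dots,x_n)$ as $\exists h\,\bigl(\J_{n-1}(x_1,h,x_3,\dots,x_{n-1})\AND \J(h,x_2,x_n)\bigr)$: the helper $h$ replaces the second coordinate of the head pair and is pinned to $x_1$ or to $x_2$ depending on which disjunct of $\J_n$ holds. Your recursion instead keeps the head pair intact and collapses the last two tail coordinates into a proxy $z$ for $\min(x_n,x_{n+1})$, via $\exists z\,\bigl(\J_n(x_1,x_2,x_3,\dots,x_{n-1},z)\AND \J(z,x_n,x_{n+1})\bigr)$. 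Both recursions unfold to chains of $n-3$ ternary $\J$-constraints, both verifications are elementary case analyses of comparable length (yours correctly invokes density of $\Q$ to place $z$ strictly between $x_{n+1}$ and $\min(x_1,x_2)$ when $x_n\geq\min(x_1,x_2)$), and both formulas --- the paper's included --- contain no $<$-atoms, so your side observation that $\J_n$ is already primitively positively definable in $(\Q;\J)$ holds for either route. Neither decomposition buys anything substantive over the other; your ``running minimum of the tail'' reading is perhaps the more transparent intuition, while the paper's explicit witness choice ($h=x_1$ or $h=x_2$) makes its forward direction slightly more immediate.
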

\begin{proof}
  A primitive positive definition of $\J_n$ can be obtained inductively by the observation that 
  $\J_n(x_1,\dots,x_n)$ is equivalent to the following formula.  
  \begin{align}
   \exists h \, \big (\J_{n-1}(x_1, h, x_3, \dotsc, x_{n-1}) \AND \J(h, x_2, x_n) \big) \label{eq:jn-induction}
  \end{align}
Every tuple $t \in \J_n$ satisfies (\ref{eq:jn-induction}): if $t$ satisfies $x_1 = x_2$ 
or if $t$ satisfies $x_n < \min(x_1,x_2)$, 
choose $h=x_1$; if $t$  satisfies $x_i < \min(x_1,x_2)$ for some $i \in \{3,\dots,n-1\}$, choose $h = x_2$. 
Conversely, suppose that $t \in {\mathbb Q}^n$ satisfies (\ref{eq:jn-induction}). If $t$ satisfies $x_1 = h$, then $t$ satisfies $x_1 = x_2 = h$ or $x_n < x_1 \AND x_n < x_2$ and therefore $\J_n$. The case that $t$  satisfies $x_2 = h$ is analogous. If $t$ satisfies $x_n < h \AND x_n < x_2$ and $x_i < x_1  \AND x_i < h$ for some $i \in \{3,\dots,n\}$, then it also satisfies $\min(x_i, x_n) < \min(x_1, x_2)$ and hence $t$ satisfies $\J_n$.
\end{proof}

\begin{lem}\label{lem:mix-jn}
For every $n \geq 3$, the operation $\mix$ preserves $\J_n$.
\end{lem}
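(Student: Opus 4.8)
The plan is to first record a convenient description of the weak linear order $\mix$ induces on $\Q^2$, then reduce the statement to the case $n=3$ via Lemma~\ref{thm:JgeneratesLongJ}, and finally settle $n=3$ by a short case analysis.

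For $(x,y)\in\Q^2$ put $\sigma(x,y)\ceq 0$ if $x>y$, $\sigma(x,y)\ceq 1$ if $x<y$, and $\sigma(x,y)\ceq 2$ if $x=y$. Unfolding Definition~\ref{def:mix} case by case, $\mix(x,y)$ equals $\gamma$, $\alpha$, respectively $\beta$ applied to $\min(x,y)$ according to whether $\sigma(x,y)$ is $0$, $1$, or $2$; thus $\mix(x,y)$ depends only on $\big(\min(x,y),\sigma(x,y)\big)$, and the inequalities $\gamma(a)<\alpha(a)<\beta(a)<\gamma(a+\epsilon)$ from Definition~\ref{def:mix} say exactly that this dependence is strictly increasing for the lexicographic order on $\Q\times\{0,1,2\}$. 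Consequently $\mix(x,y)<\mix(x',y')$ holds iff $\big(\min(x,y),\sigma(x,y)\big)<_{\lex}\big(\min(x',y'),\sigma(x',y')\big)$, and equality of the two $\mix$-values corresponds to equality of these keys. Since $x_1<x_2$ and $y_1<y_2$ imply $\min(x_1,y_1)<\min(x_2,y_2)$, this also shows that $\mix$ preserves $<$.

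By Lemma~\ref{thm:JgeneratesLongJ}, for every $n\geq 3$ the relation $\J_n$ has a primitive positive definition in $(\Q;<,\J)$, and primitive positive definability transfers polymorphisms; as $\mix$ preserves $<$, it suffices to show that $\mix$ preserves $\J=\J_3$. So fix $s,t\in\J$, set $u_i\ceq\mix(s_i,t_i)$, and write $m_i\ceq\min(s_i,t_i)$, $\sigma_i\ceq\sigma(s_i,t_i)$. Recalling $\J=\{(a,b,c)\mid a=b\ \OR\ (c<a\wedge c<b)\}$ and the description of the $\mix$-order, it is enough to show that $(m_1,\sigma_1)\neq(m_2,\sigma_2)$ implies $m_3<m_1$ and $m_3<m_2$ (then $u_3<u_1$ and $u_3<u_2$, so $u\in\J$; and if $(m_1,\sigma_1)=(m_2,\sigma_2)$ then $u_1=u_2$ and $u\in\J$ trivially). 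The heart of the argument is the claim that there is a tuple $r\in\{s,t\}$ with $r_1\neq r_2$ and $\min(r_1,r_2)\leq\min(m_1,m_2)$; granting it, $r\in\J$ forces $r_3<\min(r_1,r_2)$, whence $m_3=\min(s_3,t_3)\leq r_3<\min(m_1,m_2)$, as needed. To prove the claim, use that $\J$ is symmetric in its first two coordinates and distinguish two cases. If $m_1\neq m_2$, say $m_1<m_2$: choose $r\in\{s,t\}$ with $r_1=m_1$; then $r_2\geq m_2>m_1=r_1$, so $r_1\neq r_2$ and $\min(r_1,r_2)=m_1$. If $m_1=m_2$ but $\sigma_1\neq\sigma_2$: a check of the three possibilities $\{\sigma_1,\sigma_2\}\in\big\{\{0,1\},\{0,2\},\{1,2\}\big\}$ shows that one of $s,t$ takes the common value $m_1$ in one of its first two coordinates and a strictly larger value in the other, which is the desired $r$.

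The only genuinely case-driven step is this last claim, and it is where care is needed: the point is that a mismatch of the $\mix$-keys at the first two coordinates always localizes a \enquote{split} inside one of the two input tuples — because either the two minima already differ, or (if not) the two comparison types do — after which the defining property of $\J$ pins down the third coordinate. Everything else is bookkeeping with the lexicographic description of $\mix$. One can also bypass Lemma~\ref{thm:JgeneratesLongJ} and run essentially the same argument directly for general $n$ from the CNF form $\phi^{\mix}_n$ of $\J_n$: a mismatch $u_1\neq u_2$ then yields a tuple $r\in\{s,t\}$ together with an index $i\in\{3,\dots,n\}$ such that $r_i<\min(r_1,r_2)\leq\min(m_1,m_2)$, hence $m_i<\min(m_1,m_2)$ and $u_i<u_1,u_2$.
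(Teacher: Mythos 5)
Your proof is correct and follows essentially the same route as the paper: reduce to $n=3$ via Lemma~\ref{thm:JgeneratesLongJ} together with the Inv--Pol correspondence, then analyse how $\mix$ acts on two triples of $\J$. Your explicit observation that $\mix$ induces the lexicographic order on the keys $\bigl(\min(x,y),\sigma(x,y)\bigr)$ is a clean way to package what the paper does by a direct contradiction argument, but the underlying case analysis is the same.
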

 \begin{proof}
To prove that $\mix$ preserves $\J_n$ it suffices prove that $\mix$ preserves $\J$ due to Lemma~\ref{thm:JgeneratesLongJ} and Theorem~\ref{thm:InvPol}. 
%Let $(x_1, x_2, x_3)$ be names for the coordinates of $\J$. 
Suppose for contradiction that there are $t_1, t_2 \in \J$ such that $t_3 \ceq \mix(t_1, t_2) \not\in \J$. Then $t_3$ must satisfy $(x < y \AND x \leq z) \OR (y < x \AND y \leq z)$. 
Without loss of generality we may assume that $t_3$ satisfies the first disjunct. 
As $t_3[x]$ 
is minimal in $t_3$, the coordinate $x$ must be minimal in either $t_1$ or $t_2$. Assume the coordinate $x$ is minimal in $t_1$; the case with $t_2$ can be proven analogously. 
Then $t_1$ satisfies $x = y$ because $t_1 \in \J$. 
If $t_2$ satisfies $x = y$ then $t_3$ satisfies $x = y$, contrary to our assumptions. 
This implies that $t_2 \in \J$ satisfies $z < \min(x, y)$. If $t_2[z] < t_1[x]$ 
then $\min(t_1[z],t_2[z])=t_2[z] < \min(t_1[x],t_2[x])$, and hence $t_3[z] < t_3[x]$, a contradiction. Therefore, $\min(t_2[x], t_2[y]) > t_1[x]=t_1[y]$ and hence $t_3[x] = t_3[y]$, a contradiction. 
\end{proof}

\begin{thm}\label{thm:SNF_Mix}
  A temporal relation is preserved by $\mix$ if and only if it has a definition by a conjunction of clauses of the form 
  \begin{align}
    &\bigvee_{i=1}^n x\neq z_i \vee \bigvee_{i=1}^m x>y_i \qquad \text{for } n,m\in \N \label{eq:mixNf1}\\ 
   \text{and } \quad & 
   \phi^{\mix}_n(x_1,x_2,x_3,\dotsc, x_n) \qquad \text{for } n\geq 3.\label{eq:Jn}
  \end{align}
\end{thm}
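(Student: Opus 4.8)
The plan is to establish both directions of the syntactic characterization of relations preserved by $\mix$, following the standard template used for the analogous results in Theorem~\ref{thm:SNF_base}. For the \emph{easy direction} — that every relation defined by a conjunction of clauses of the forms (\ref{eq:mixNf1}) and (\ref{eq:Jn}) is preserved by $\mix$ — I would argue clause by clause. Clauses of type (\ref{eq:Jn}) are exactly $\J_n$, which is preserved by $\mix$ by Lemma~\ref{lem:mix-jn}. For clauses of type (\ref{eq:mixNf1}), note that such a clause is of the syntactic form appearing in item~\ref{item:SNF_Mi} of Theorem~\ref{thm:SNF_base} (a disjunction of literals from $\{\neq,>,\geq\}$ with at most one $\geq$, and here in fact none), so it is preserved by $\mi$; by Lemma~\ref{thm:mixGeneratesMi}, $\mi \in \overline{\langle \mix, \Aut(\Q;<)\rangle}$, and since $\Aut(\Q;<)$ preserves every temporal relation and preservation is inherited by the locally closed generated clone, $\mix$ preserves every clause of type (\ref{eq:mixNf1}) as well. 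A conjunction of preserved relations is preserved, so the whole relation is preserved by $\mix$.

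For the \emph{hard direction}, I would start from an arbitrary temporal relation $R$ of arity $k$ preserved by $\mix$, fix a quantifier-free $\{<\}$-definition $\phi = \bigwedge_\ell \bigvee_{i\in I_\ell} \phi_{\ell,i}$ in \emph{reduced CNF}, and show that each clause $C_\ell = \bigvee_{i \in I_\ell} \phi_{\ell,i}$ must, up to logical equivalence modulo the other clauses, be of one of the two prescribed shapes. The combinatorial heart is to extract, for a fixed clause $C$, a distinguished variable $x$ playing the role of the minimum: since $\phi$ is reduced, for every disjunct $\phi_{\ell,i}$ of $C$ there is a witness tuple $t^{(i)} \in R$ satisfying $\phi_{\ell,i}$ and no other disjunct of $C$. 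Applying $\mix$ to pairs of such witnesses and using that $R$ is closed under $\mix$, one derives strong constraints on how the disjuncts of $C$ can interact. The key structural fact about $\mix$ to exploit is that, like $\mi$, it forces a ``tree-like'' behaviour near the minimum — $\mix(x,y)$ is minimal in the output iff the minimum coordinate is shared and equal in both arguments — and this is precisely what distinguishes $\J_n$-type clauses (which genuinely need two ``protected'' variables $x_1,x_2$) from the simpler $\mi$-type clauses with a single distinguished variable.

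Concretely, I would split into cases according to the structure of the clause $C$. First, reduce every atomic disjunct in $C$ to involve a fixed variable as one of its two arguments (this is legitimate for $\mi$-preserved pieces by the reduced-CNF analysis used in~\cite{tcsps-journal}; one shows that a clause of a $\mi$-preserved, hence $\mix$-restricted, relation can be rewritten so that one variable $x$ occurs in every disjunct). Then, each disjunct is of the form $x \neq z_i$, $x > y_i$, or $x \geq y_i$. If there is at most one $\geq$-disjunct and it can be absorbed, we land in form (\ref{eq:mixNf1}) (possibly after rewriting $x \geq y$ as $x \neq y \vee x > y$ and redistributing, or by showing the $\geq$ must actually be a $>$ because $\mix$ — unlike $\mi$ — does not preserve $R^{\mi}$ with a $\geq$ unless it is part of a $\J_n$ pattern). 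If instead the clause genuinely contains a ``doubled'' $\geq$ pattern, I would show that preservation by $\mix$ forces the clause, together with a partner clause, to combine into $\phi^{\mix}_n$: this is where one uses that $\J_n(a,b,c_3,\dots,c_n)$ is equivalent to the conjunction of two $R^{\mi}$-clauses $R^{\mi}(a,b,\dots) \wedge R^{\mi}(b,a,\dots)$ as noted after Lemma~\ref{thm:JgeneratesLongJ}, and that $\mix$ preserves $R$ but not $R^{\mi}$ in isolation (since $R^{\mi}$ is not preserved by $\mx$ and hence not by $\mix$ via Lemma~\ref{thm:mixGeneratesMi} together with the inclusions among clones), so any occurrence of an $R^{\mi}$-shaped clause in $\phi$ must be paired.

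The main obstacle I anticipate is this last step: pinning down exactly when a clause is forced to pair up into a $\J_n$-pattern versus when it survives on its own as a clause of type (\ref{eq:mixNf1}). This requires a careful argument that applies $\mix$ to specifically chosen tuples — typically taking two witnesses $t^{(i)}, t^{(j)}$ for distinct disjuncts and arranging their coordinate orders so that $\mix(t^{(i)}, t^{(j)})$ violates a candidate simpler clause — and then reading off from the resulting contradiction that the ``protected'' second variable $x_2$ is genuinely needed. I would model this closely on the proof of item~\ref{item:SNF_Mi} in~\cite{tcsps-journal} and the proof of Theorem~\ref{thm:mixedmx} in~\cite{RydvalFP}, since $\mix$ sits ``between'' $\mi$ and $\mx$ and its normal form is correspondingly a hybrid: the $\J_n$-clauses encode the $\mx$-like equational content ($\chi_0$ solving a linear system — here the single equation $x_1 = x_2$ lifted to arity $n$) while the (\ref{eq:mixNf1})-clauses carry the residual $\mi$-like content.
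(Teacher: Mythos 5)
Your proposal has a genuine logical gap in the ``easy'' direction, and the decisive step of the ``hard'' direction is deferred rather than proved. For the easy direction you argue that a clause of type~\eqref{eq:mixNf1} is preserved by $\mi$ and then invoke Lemma~\ref{thm:mixGeneratesMi} ($\mi \in \overline{\langle \mix, \Aut(\Q;<)\rangle}$) to conclude it is preserved by $\mix$. This inference is backwards: the inclusion $\overline{\langle \mi,\Aut(\Q;<)\rangle} \subseteq \overline{\langle \mix,\Aut(\Q;<)\rangle}$ yields that every temporal relation preserved by $\mix$ is preserved by $\mi$, not the converse. If your inference were valid it would equally show that $R^{\mi}$ (a single $\mi$-normal-form clause with one $\geq$) is preserved by $\mix$, which is false: $(1,0,5)$ and $(0,0,5)$ lie in $R^{\mi}$, yet $\mix\big((1,0,5),(0,0,5)\big)=(\gamma(0),\beta(0),\beta(5))=(0,2,17)\notin R^{\mi}$ --- and indeed the whole content of the theorem is that $\geq$-literals must pair up. So preservation of type~\eqref{eq:mixNf1} clauses by $\mix$ needs a direct verification (the paper gives one, arguing that $\mix$ is constant on a set of pairs only if one coordinate is constant and the other dominates it). The same backwards reasoning recurs in your parenthetical ``$R^{\mi}$ is not preserved by $\mx$ and hence not by $\mix$'': $\mx$ is not in $\overline{\langle \mix,\Aut(\Q;<)\rangle}$ (e.g.\ $\neq$ is preserved by $\mix$ but not by $\mx$), so nothing follows from that route. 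Your structural claim that ``$\mix(x,y)$ is minimal in the output iff the minimum coordinate is shared and equal in both arguments'' is also the behaviour of $\mi$, not of $\mix$, for which the smallest branch value $\gamma(m)$ occurs where the second argument strictly attains the minimum.

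For the hard direction your starting point agrees with the paper's: pass to the $\mi$ normal form of Theorem~\ref{thm:SNF_base}~\ref{item:SNF_Mi} (here Lemma~\ref{thm:mixGeneratesMi} is used in the correct direction) and show that every clause containing a $\geq$-literal must either lose it or combine with a partner clause into $\phi^{\mix}_n$. But this combinatorial core --- which you yourself flag as ``the main obstacle'' --- is exactly where the work lies and is not carried out. The paper handles it by fixing a definition that minimises the number of unpaired $\geq$-clauses (and admits no replacement of $\neq$ by $>$), listing the candidate replacement formulas, extracting witness tuples for the failure of each replacement from reducedness, and combining these witnesses under $\mix$ and automorphisms (the computations in Tables~\ref{tab:SNF_Mix1} and~\ref{tab:SNF_Mix2}) to produce a tuple of $R$ violating the clause, a contradiction. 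Without an argument of this kind the characterisation is asserted, not proved.
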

\begin{proof}
  Let $R$ be a temporal relation preserved by $\mix$. Due to Lemma~\ref{thm:mixGeneratesMi}, the relation $R$ is also preserved by $\mi$. By Theorem~\ref{thm:SNF_base} case~\ref{item:SNF_Mi} the relation $R$ can be defined by a conjunction $\phi$ of clauses of the form
  \begin{align}
    & x \geq y \vee \bigvee_{i=1}^n x\neq z_i \OR \bigvee_{i=1}^m x > y_i  \qquad \text{for } n,m\in \N \label{eq:SNF_Mi}
  \end{align}
 where the literal $x\geq y$ can be omitted. Let $U_{\phi}$ be the set of clauses in $\phi$ which do have a literal of the form $x \geq y$ and which cannot be paired with another clause such that their conjunction is of the form $\phi^{\mix}_k$ for some $k$. Without loss of generality, we may assume that $\phi$ is chosen such that $\lvert U_{\phi}\rvert$ is minimal and such that no literal of the form $x \neq z_j$ can be replaced by $x > z_j$ without altering the relation defined by $\phi$. If $U_{\phi}$ is empty, then we are done. 
  Suppose towards a contradiction that $U_{\phi}$ contains a clause $C \ceq \big (x \geq y \vee \bigvee_1^n x\neq z_i \OR \bigvee_1^m x > y_i \big)$. Consider the new formulas $\phi_1, \dots, \phi_{n+3}$ obtained from $\phi$ by replacing $C$ by,
    respectively, 
  \begin{align}
    %& \bigvee_1^n x\neq z_i \OR \bigvee_1^m x > y_i \quad \text{(literal removal)},  \label{eq:replNogeq}\\
    &  x>y \vee \bigvee_1^n x\neq z_i \OR \bigvee_1^m x > y_i, % \quad \text{(literal replacement)}, 
     \label{eq:replReplgeq}\\
    &  x\geq y \vee x>z_1 \vee \bigvee_2^n x\neq z_i \OR \bigvee_1^m x > y_i, %\quad \text{(literal replacement)},  
    \label{eq:replReplneq}\\
    & \phi^{\mix}_{2+n+m}(x,y,z_1, \dots, z_n, y_1, \dots, y_m), \label{eq:replJxy} \\
   \text{or } \quad  &\phi^{\mix}_{2+n+m}(z_i, y, z_1, \dots, z_{i-1}, x, z_{i+1}, \dots,z_n, y_1, \dots, y_m) \quad \text{for some } i\in [n] . \label{eq:replJzy}   
  \end{align}
  Note that $\phi_j$ implies $\phi$ for each $j \in [n+3]$.
  Also note that if $\phi$ is equivalent to $\phi_j$ we found a contradiction to our choice of $\phi$ because either $\lvert U_{\phi_j} \rvert < \lvert U_{\phi} \rvert$ or we can replace a literal of the form $x\neq z_j$. This implies the existence of tuples $t_1, \dots, t_{n+3} \in R$ that do not satisfy $\phi_1, \dots, \phi_{n+3}$, respectively.
  We start the analysis of these tuples with the special case $n=0$. In this case we get
  \begin{itemize}
  \item a tuple $t_1 \in R$ that does not satisfy Clause~\eqref{eq:replReplgeq}. Since $t_1 \in R$ it must satisfy $U$, and hence it satisfies $x=y \AND \bigwedge_{i=1}^m x \leq y_i$; 
  \item a tuple $t_3 \in R$ that does not satisfy  Clause~\eqref{eq:replJxy}, i.e., $t_3$ satisfies $x>y \AND \bigwedge_{i=1}^m y \leq y_i$. 
  \end{itemize}
But then there exist $\alpha, \beta\in \Aut(\Q;<)$ such that  $t \ceq \mix(\alpha(t_3), \beta(t_1))$ does not satisfy $C$. The automorphisms $\alpha$ and $\beta$ have nothing to do with $\alpha$ and $\beta$ from
the definition of $\mix$, and their behaviour is illustrated in Table~\ref{tab:SNF_Mix1}.  The
automorphism $\alpha$ maps the coordinate of $t_3$ corresponding to $x$ in $C$ to
some value greater $0$. Likewise for the other entries of Table~1.

Therefore, $t$ does not satisfy $\phi$, contradicting the assumption that $R$ is preserved by $\mix$.
    \begin{table}
        \begin{center}%\label{tab:SNF_Mix1}
         $\begin{array}{r|rrc}
         & \multicolumn{1}{c}{x} & \multicolumn{1}{c}{y} & \multicolumn{1}{c}{y_{1 \leq i \leq m}} \\ \hline
      t_3' \ceq \alpha(t_3) & >0 & 0 & \geq 0  \\
      t_1' \ceq \beta(t_1) & 0 & 0 & \geq 0 \\ \hline
            \mix(t_3',t_1')& 0 & >0 & \geq 0
          \end{array}$
        \end{center}
        \caption{Calculation for the proof of Theorem~\ref{thm:SNF_Mix} in case $n=0$.}
        \label{tab:SNF_Mix1}
    \end{table}
If $n\geq 1$ the tuples are as follows:
\begin{itemize}
\item $t_1$ does not satisfy Clause~\eqref{eq:replReplgeq}, i.e., $t_1$  satisfies $x=y \AND \bigwedge_{i=1}^n x=z_i \AND \bigwedge_{i=1}^m x \leq y_i$;
\item $t_2$ does not satisfy Clause~\eqref{eq:replReplneq}, i.e., $t_2$  satisfies $x<y \AND x < z_1 \AND  \bigwedge_{i=2}^n x=z_i  \AND \bigwedge_{i=1}^m x \leq y_i$;
\item $t_4$ does not satisfy Clause~\eqref{eq:replJzy} for $i=1$, i.e., $t_4$  satisfies $$z_1 \neq y \AND (x\geq z_1 \OR x\geq y)\AND \bigwedge_{j=2}^n (z_j \geq z_1 \OR z_j \geq y) \AND \bigwedge_{j=1}^m (y_j \geq z_1 \OR y_j \geq y).$$ 
\end{itemize}
One of the following cases must apply: 
\begin{enumerate}
 \item $R$ contains $t_{4,z_1}$ satisfying $\psi_{z_1} \ceq y > z_1 \AND x > z_1 \AND  \bigwedge_{j=2}^n z_j \geq z_1 \AND \bigwedge_{j=1}^m y_j \geq z_1$;
 \item $R$ contains $t_{4,xz_1}$ satisfying $\psi_{xz_1} \ceq y > z_1 \AND x = z_1 \AND  \bigwedge_{j=2}^n z_j \geq z_1 \AND \bigwedge_{j=1}^m y_j \geq z_1$;
 \item $R$ contains $t_{4,y}$ satisfying $\psi_{y} \ceq z_1 > y \AND x > y \AND  \bigwedge_{j=2}^n z_j \geq y \AND \bigwedge_{j=1}^m y_j \geq y$;
 \item $R$ contains $t_{4,xy}$ satisfying $\psi_{xy} \ceq z_1 > y \AND x = y \AND  \bigwedge_{j=2}^n z_j \geq y \AND \bigwedge_{j=1}^m y_j \geq y$.
\end{enumerate}
Using suitable automorphisms $\alpha_1, \dots, \alpha_6 \in \Aut(\Q;<)$, we deduce the following (see Table~\ref{tab:SNF_Mix2}):
\begin{itemize}
\item in case (1)
there is also $t'''_{4,y} \in R$ satisfying $\psi_{y}$,
so we are also in case (3); 
\item in case (2) 
there is also $t''_{4,y} \in R$ satisfying $\psi_{y}$,
so we also in case (3); 
\item in case (3) 
the tuple $t^* \ceq \mix(t'_{4,y},t'_1) \in R$ 
does not satisfy $C$, a contradiction. 
\item in case (4) 
%If $t_{4,xy} \in R$ then 
there is also $t''_{4,z_1} \in R$ satisfying $\psi_{z_1}$, so we are also in case (3). 
\end{itemize}
Hence, in each case we reached a contradiction, which shows that the assumption that $U_\phi$ is non-empty must be false.
 \begin{table}
  \setlength{\myl}{7pt}
  \begin{center}%\label{tab:SNF_Mix2}
         $\begin{array}{r|rrrrc}
            &  \multicolumn{1}{c}{x} & \multicolumn{1}{c}{y} & \multicolumn{1}{c}{z_1} & \multicolumn{1}{c}{z_{2\leq i \leq n}} & \multicolumn{1}{c}{y_{1 \leq i \leq m}} \\ \hline
            t_2' \ceq \alpha_2(t_2) & 0 & >0   & >0  & 0\hspace{\myl} & \geq 0 \\
            t_1' \ceq \alpha_1(t_1) & 0 & 0   & 0 & 0\hspace{\myl} & \geq 0 \\ \hline
            t_{yz_1} \ceq \mix(t_2', t_1') & >0 & 0   & 0 & >0\hspace{\myl} & \geq 0 \\ \hline \hline        
            t'_{4,xz_1} \ceq \alpha_3(t_{4,xz_1})  & 0 & >0   & 0 & \geq 0\hspace{\myl} & \geq 0 \\                   t_{yz_1}                  & >0 & 0   & 0 & >0\hspace{\myl} & \geq 0 \\ \hline
            t_{4,y}'' \ceq \mix(t'_{4,xz_1}, t_{yz_1})  & >0 & 0   & >0 & >0\hspace{\myl} & \geq 0 \\            \hline \hline
            t_{4,xy}' \ceq \alpha_4(t_{4,xy}) & 0 & 0 & >0  & \geq 0\hspace{\myl} & \geq 0 \\
         t_1'                    & 0 & 0   & 0 & 0\hspace{\myl} & \geq 0 \\ \hline
            t_{4,z_1}'' \ceq \mix(t_{4,xy}',t_1')& >0 & >0 & 0  & \geq 0\hspace{\myl} & \geq 0 \\ \hline \hline
            t_{4,z_1}' \ceq \alpha_5(t_{4,z_1})& >0 & >0 & 0  & \geq 0\hspace{\myl} & \geq 0 \\ 
            t_{yz_1}                  & >0 & 0   & 0 & >0\hspace{\myl} & \geq 0 \\ \hline
            t_{4,y}''' \ceq \mix(t_{4,z_1}', t_{yz_1})  & >0 & 0   & >0 & >0\hspace{\myl} & \geq 0 \\ \hline \hline
            t_{4,y}' \ceq \alpha_6(t_{4,y})  & >0 & 0   & >0 & >0\hspace{\myl} & \geq 0 \\
            t_1'                   & 0 & 0   & 0 & 0\hspace{\myl} & \geq 0 \\ \hline
            t^* \ceq \mix(t_{4,y}', t_1') & 0 & >0   & 0 & 0\hspace{\myl} & \geq 0 \\
          \end{array}$
  \end{center}
  \caption{Calculation for the proof of Lemma~\ref{thm:SNF_Mix} in case $n\geq 1$.}
  \label{tab:SNF_Mix2}
 \end{table}

It remains to show that conjunctions of clauses of the form~\eqref{eq:mixNf1} and~\eqref{eq:Jn} are preserved by $\mix$. 
It suffices to verify that every relation defined by a single clause of this form is preserved by $\mix$. 
For the clauses of the form~\eqref{eq:Jn} we have already shown this in Lemma~\ref{lem:mix-jn}. 
Let $S$ be the relation defined by $\bigvee_{i=1}^n x\neq z_i \OR \bigvee_{i=1}^m x > y_i$. Suppose for contradiction that there exist $t_1, t_2 \in S$ such that $t_3 \ceq \mix(t_1, t_2) \not\in S$. Then $t_3$ must satisfy $x=z_1=\dots = z_n \AND \bigwedge_{i=1}^m x \leq y_i$. Therefore, either $t_1$ or $t_2$ must satisfy $C \ceq x=z_1=\dots = z_n > y_j$ for some $j$, because $\mix$ is only constant on a set of pairs if one coordinate is constant and the other coordinate is bigger or equal to the first one. Without loss of generality we may assume that $t_1$ satisfies $C$ with $j=1$.

If $t_2$ satisfies $C$ with some $j_2$ then $\min(t_1[x], t_2[x]) > \min(t_1[y_1], t_2[y_{j_2}])$ and therefore $t_3[x] > \min(t_3[y_1], t_3[y_{j_2}])$, contradicting $t_3 \not\in R$. If $t_2$ satisfies $x\neq z_j$ for some $j$, then $t_3[x] = t_3[z_j]$ implies that $\min(t_2[x], t_2[z_j]) > t_1[x]$. But then $t_3[x] > t_3[y_1]$, contradicting $t_3 \not\in S$.
\end{proof}

 \section{Primitive Positive Definability of the Relation \texorpdfstring{$\J$}{\Jtext}}\label{sec:DefinabilityOfJ}
 In this section we prove the following theorem.
 \begin{thm} \label{thm:dichotomy} Let $\fA$ be a first-order expansion of $({\mathbb Q};<)$ that is preserved by $\pp$. Then $\J$ has a primitive positive definition in $\fA$ if and only if $\fA$ is not preserved by $\ll$. 
 \end{thm}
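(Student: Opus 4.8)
The plan is to prove the two implications separately.

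\emph{From primitive positive definability of $\J$ to $\ll\notin\Pol(\fA)$.} This is the routine direction. It is enough to exhibit tuples witnessing that $\ll$ does not preserve $\J$, since any primitive positive definition of $\J$ in $\fA$ is preserved by every polymorphism of $\fA$, so $\ll$ cannot then be one of them. I would take $t_1\ceq(-2,-2,-1)$ and $t_2\ceq(-3,-2,-4)$; both lie in $\J$ (the first because its first two entries agree, the second because its third entry is strictly the smallest). All entries occurring here are non-positive, so the first two defining conditions of $\ll$ show that $\ll(t_1,t_2)$ is a triple $(p,q,r)$ with $p=\ll(-2,-3)<q=\ll(-2,-2)<r=\ll(-1,-4)$, and a strictly increasing triple is not in $\J$.

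\emph{From $\ll\notin\Pol(\fA)$ to primitive positive definability of $\J$.} Since $\fA$ is preserved by $\pp$, Theorem~\ref{thm:foundational2} yields four (non-exclusive) cases: $T_3$ has a primitive positive definition in $\fA$, or $\fA$ is preserved by $\mi$, by $\mx$, or by $\min$. In every case the target is a primitive positive definition of $R^{\mi}$ or of some $\J_n$: this suffices because $\J(a,b,c)$ is equivalent to $R^{\mi}(a,b,c)\wedge R^{\mi}(b,a,c)$, and identifying the last $n-2$ coordinates in~\eqref{eq:Jn-def} shows that $\J_n$ primitively positively defines $\J=\J_3$. If $\min\in\Pol(\fA)$, I would use the reduced conjunctive normal form of Theorem~\ref{thm:SNF_base}~(\ref{item:SNF_Min}) to write a relation of $\fA$ that is not preserved by $\ll$ as a conjunction of clauses $x_1\circ_2 x_2\vee\dots\vee x_1\circ_n x_n$ with $\circ_i\in\{>,\geq\}$; since purely strict clauses and single inequalities are of the $\ll$-shape in Theorem~\ref{thm:SNF_base}~(\ref{item:SNF_Ll}), some conjunct must combine a $\geq$-literal with at least one further literal, and by identifying the remaining variables with $x_1$ (and, if there are several $\geq$-literals, identifying two of their variables with each other) this conjunct descends to $R^{\mi}$ or to $R^{\min}_{\leq}$; in the latter case $\geq$ becomes primitively positively definable upon specialising $R^{\min}_{\leq}$ on its diagonal, and then $R^{\mi}(a,b,c)$ equals $\exists c'.\, (c<c')\wedge R^{\min}_{\leq}(a,b,c')$. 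The case $\mx\in\Pol(\fA)$ reduces quickly: by Theorem~\ref{thm:mixedmx} either $\fA$ is preserved by a constant operation, which is impossible for an expansion of $(\Q;<)$, or by $\min$, which is the previous case, or $X$ has a primitive positive definition in $\fA$, so that $\Pol(\fA)=\Pol(\Q;X)=\overline{\langle\{\mx\}\cup\Aut(\Q;<)\rangle}$, and then $\J$ is primitively positively definable in $\fA$ simply because $\mx$ preserves $\J$.

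\emph{The case $\mi\in\Pol(\fA)$, and the case $T_3$ primitively positively definable.} For $\mi\in\Pol(\fA)$ I would split on whether $\leq$ is primitively positively definable in $\fA$. If it is, the argument parallels the $\min$-case: in the normal form of Theorem~\ref{thm:SNF_base}~(\ref{item:SNF_Mi}) a relation of $\fA$ that is not $\ll$-preserved must have a conjunct whose unique $\geq$-literal is accompanied by further literals, and using $\leq$ to kill superfluous strict disjuncts together with variable identifications one recovers $R^{\mi}$. If $\leq$ is not primitively positively definable, one first shows that $\mix$ is then a polymorphism of $\fA$ — every $\mi$-clause that is not of one of the two shapes in Theorem~\ref{thm:SNF_Mix} forces $\leq$ to be definable, by the $U_\phi$-analysis from the proof of that theorem — so that every relation of $\fA$ is a reduced conjunction of clauses of the forms~\eqref{eq:mixNf1} and~\eqref{eq:Jn}; since the relations defined by a single clause of the first form are preserved by $\ll$, a relation witnessing $\ll\notin\Pol(\fA)$ must contain an essential conjunct $\phi^{\mix}_n$, whence $\J_n$ and then $\J$ are primitively positively definable. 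Finally, $T_3$ being primitively positively definable already forces $\ll\notin\Pol(\fA)$ (since $\ll$ does not preserve $T_3$), and here I would give a direct primitive positive construction of $R^{\min}_{\leq}$, and hence of $\J$, from $T_3$ and $<$, exploiting that $T_3(u,y,z)$ expresses exactly $u=\min(y,z)\wedge y\neq z$, that $x\geq y\equiv\exists w.\, T_3(y,x,w)$, and that $\exists w.\, T_3(x,y,w)\wedge T_3(x,z,w)$ expresses $x<\min(y,z)\vee x=y=z$.

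I expect the main obstacles to lie in the two extraction steps where $\leq$ is not available, namely $\min\in\Pol(\fA)$ and $\mi\in\Pol(\fA)$ with $\leq$ not primitively positively definable: one must use the reducedness of the normal form to argue that the $\ll$-forbidden conjunct genuinely survives the variable identifications and does not collapse to a strictly larger relation, while also checking that the auxiliary relations invoked (order atoms and clauses of the form~\eqref{eq:mixNf1}) are themselves primitively positively available in $\fA$ and preserved by $\ll$. The $T_3$ case is the other delicate point, since a primitive positive definition of $\J$ built solely from $T_3$ and $<$ must still capture the ``$a=b$ with $c$ arbitrary'' part of $\J$, which the naive one-variable gadgets do not.
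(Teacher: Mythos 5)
Your forward direction is correct and matches the paper's (a two-tuple witness that $\ll$ does not preserve $\J$), your top-level case split for the converse coincides with the paper's ($T_3$ primitively positively definable, else $\mi$, $\mx$, or $\min$ via Theorem~\ref{thm:foundational2}), and your treatment of the $\mx$-with-$X$ subcase via the clone identity $\Pol(\fA)=\overline{\langle\{\mx\}\cup\Aut(\Q;<)\rangle}$ is a legitimate, slightly slicker alternative to the paper's explicit formula. The endgame $R^{\min}_{\leq}\to\;\geq\;\to R^{\mi}\to\J$ is also fine.

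The gap lies in the extraction steps, which you flag as ``obstacles'' but which are in fact the entire content of the theorem: they occupy Sections~\ref{sec:mi}--\ref{sec:min} of the paper (Propositions~\ref{thm:mi+leq}, \ref{thm:produce-mix}, \ref{thm:mix}, and~\ref{thm:min}). ``Identifying the remaining variables with $x_1$'' acts on the \emph{whole} defining conjunction $\phi$, not just on the offending clause $C$, so what you obtain is the projection of $\phi$ with identifications onto three coordinates; a priori this is only a \emph{subset} of $R^{\mi}$ (resp.\ $R^{\min}_{\leq}$, $\J_n$), and proving equality requires exhibiting tuples of $R$ realising every orbit of the target while satisfying all the other clauses of $\phi$. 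That is exactly what the paper's witness-tuple arguments do, and reducedness alone does not supply them: Proposition~\ref{thm:mi+leq} needs a non-trivial clause-replacement argument just to obtain the witness $t_1$; Proposition~\ref{thm:min} needs $\leq$ to be primitively positively available (via Lemma~\ref{thm:NoLeqMxOrmi}, which is why the paper restricts that case to ``$\min$ but neither $\mi$ nor $\mx$''); and Proposition~\ref{thm:mix} needs several rounds of applying $\mix$ and automorphisms to manufacture the tuples $t_c^*$, $t_{x,i}^*$, $t_{y,i}^*$ before the final formula with side constraints can be verified. None of this is replaced by your sketch. A second, more localised flaw: your route to ``$\mix\in\Pol(\fA)$ when $\leq$ is not primitively positively definable'' via ``the $U_\phi$-analysis'' of Theorem~\ref{thm:SNF_Mix} is circular, since that analysis derives its contradictions by applying $\mix$ to tuples of $R$ and therefore presupposes that $\mix$ is already a polymorphism; the paper instead proves Proposition~\ref{thm:produce-mix} by an unrelated construction starting from a binary polymorphism that violates $\leq$.
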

 
The proof of this results is organised as follows. If the relation $T_3$ is primitively positively definable in $\fA$, then so is $\J$ (Proposition~\ref{thm:pp-new}). Otherwise, Theorem~\ref{thm:foundational2} implies 
that $\fA$ is preserved by $\mi$, $\mx$, or $\min$. 
It therefore suffices to treat first-order expansions $\fA$ of $({\mathbb Q};<)$ that are 
\begin{itemize}
\item preserved by $\mi$ (Section~\ref{sec:mi}),
\item preserved by $\mx$ but not by $\mi$ (Section~\ref{sec:mx}), and finally 
\item preserved by $\min$ but not by $\mi$ and not by $\mx$ (Section~\ref{sec:min}). 
\end{itemize}

 \subsection{Temporal Structures Preserved by \texorpdfstring{$\mi$}{mi}}\label{sec:mi}
In this section we prove Theorem~\ref{thm:dichotomy} for first-order expansions $\fA$ of $({\mathbb Q};<)$ that are preserved by $\mi$ 
(Proposition~\ref{thm:mi}). 
For this purpose, it turns out to be highly useful to distinguish whether the relation $\leq$ has a primitive positive definition in $\fA$ or not. 
If yes, then the statement can be shown directly 
%we may find a primitive positive definition of $\J$ relatively directly 
(Proposition~\ref{thm:mi+leq}).
Otherwise, $\fA$ is preserved by the operation $\mix$ from Section~\ref{sec:mix}  (Proposition~\ref{thm:produce-mix}). Then the syntactic normal form for temporal relations preserved by $\mix$ from Section~\ref{sec:mix} can be used to show the statement. 

\begin{prop}\label{thm:mi+leq}
Let $\fA$ be a first-order expansion of $({\mathbb Q};\leq)$ which is preserved by $\mi$ but not by $\ll$. Then $R^{\mi}$ 
and $\J$ have a primitive positive definition in $\fA$.
\end{prop}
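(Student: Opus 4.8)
The plan is to exploit the syntactic normal form for relations preserved by $\mi$ (Theorem~\ref{thm:SNF_base}, item~\ref{item:SNF_Mi}) together with the assumption that $\leq$ — and hence every clause's individual disjuncts in suitably chosen form — is primitively positively definable in $\fA$. Since $\fA$ is preserved by $\mi$ but not by $\ll$, and since $\overline{\langle \lex,\Aut(\Q)\rangle} \subseteq \overline{\langle \ll, \Aut(\Q)\rangle}$, the structure $\fA$ is not preserved by $\lex$ either; in fact, because $\mi$ generates $\pp$, Theorem~\ref{thm:foundational2}-type reasoning is not what I want here — instead I want to locate, inside $\fA$, a concrete relation witnessing the failure of preservation by $\ll$ and massage it, using pp-definitions and the available operations, into $\J$ (or $R^{\mi}$, from which $\J$ follows since $\J(a,b,c) \equiv R^{\mi}(a,b,c) \wedge R^{\mi}(b,a,c)$ as noted in the excerpt).

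**Key steps.**

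First I would take a relation $R$ of $\fA$ that is not preserved by $\ll$ and fix a definition of $R$ by a conjunction of clauses of the form $x_1 \circ_2 x_2 \vee \dots \vee x_1 \circ_n x_n$ with each $\circ_i \in \{\neq, >, \geq\}$ and at most one $\circ_i$ equal to $\geq$ (Theorem~\ref{thm:SNF_base}\ref{item:SNF_Mi}), chosen reduced. The failure of preservation by $\ll$ must be traceable to a single clause $C$; using the syntactic description in Theorem~\ref{thm:SNF_base}\ref{item:SNF_Ll}, a clause preserved by $\ll$ has a very restricted shape, so $C$ must violate that shape. Second, I would use $\leq \in \Pol^{-1}$—more precisely, the pp-definability of $\leq$ in $\fA$—to substitute and identify variables in $C$: pp-definability of $\leq$ lets me add constraints $x_i \leq x_j$, which in the normal form lets me collapse a clause to a short "core". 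Concretely I expect to be able to pp-define, from $R$ and $\leq$, a ternary relation of the form $\{(a,b,c) : a \geq b \vee a > c\} = R^{\mi}$, by quantifying away all but three cleverly chosen coordinates and forcing the remaining disjuncts to become redundant via $\leq$-constraints. Third, once $R^{\mi}$ is pp-definable, $\J$ is pp-definable via the conjunction $R^{\mi}(x,y,z) \wedge R^{\mi}(y,x,z)$, which the excerpt already records.

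**Main obstacle.**

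The hard part will be the second step: showing that the failure of $\ll$-preservation, localised to one clause $C$ of the $\mi$-normal form, genuinely forces a disjunct pattern from which $R^{\mi}$ can be extracted by pp-constructions using only $\leq$ and existential quantification — as opposed to some degenerate failure that $\leq$ alone cannot "sharpen" into $R^{\mi}$. I would handle this by a careful case analysis on the position of the unique $\geq$-disjunct in $C$ and on how the $\neq$- and $>$-disjuncts interact with the $\ll$-normal-form template from Theorem~\ref{thm:SNF_base}\ref{item:SNF_Ll}: if $C$ has no $\geq$ at all it is already an $\{>,\neq\}$-clause and one checks it is $\ll$-preserved (contradiction), so $C$ has exactly one $\geq$, say $x_1 \geq x_2$; then the remaining $>$/$\neq$ disjuncts, not matching the $\ll$-template, give a witnessing tuple whose image under $\mi$ fails the relation unless the clause is essentially $x_1 \geq x_2 \vee x_1 > x_3$, i.e. $R^{\mi}$ up to pp-manipulation. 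Pinning down "essentially" rigorously — i.e. that all other disjuncts can be eliminated by conjoining $\leq$-atoms and existentially projecting — is where the real work lies, and I expect it to mirror the kind of tuple-bookkeeping done in the proof of Theorem~\ref{thm:SNF_Mix}.
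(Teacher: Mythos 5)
Your overall strategy coincides with the paper's: write a relation $R$ of $\fA$ that is not preserved by $\ll$ in the $\mi$-normal form of Theorem~\ref{thm:SNF_base}~\ref{item:SNF_Mi}, isolate a clause $C$ responsible for the failure, use conjoined $\leq$-constraints and existential quantification to cut $C$ down to $R^{\mi}$, and finish with $\J(x,y,z)\equiv R^{\mi}(x,y,z)\wedge R^{\mi}(y,x,z)$. However, the proposal defers exactly the step that constitutes the proof, and two concrete issues in the sketch as written would have to be repaired. First, non-preservation by $\ll$ is a property of the relation, not of a single clause of one chosen definition: a clause that is not syntactically of the shape in Theorem~\ref{thm:SNF_base}~\ref{item:SNF_Ll} may still be replaceable, modulo the remaining conjuncts of $\phi$, by one that is. The argument actually needed is: if every clause $C$ containing a literal $x\geq y$ and at least one literal $x>y_i$ could be so rewritten (which happens precisely when $\phi\wedge x=y$ implies a certain disjunction), then $R$ would be preserved by $\ll$; since it is not, some clause resists the rewriting, and this resistance is what yields a tuple $t_1\in R$ satisfying $x=y\wedge x<y_j\wedge\bigwedge_i x\leq y_i\wedge\bigwedge_i x=z_i$. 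Your clause-level case analysis on the position of the $\geq$-disjunct does not produce this tuple.

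Second, after conjoining $x\leq y_i$ ($i\geq 2$), $x=z_i$, $y'\leq y$, $z\leq y_1$ and projecting to $(x,y',z)$, the inclusion of the defined relation in $R^{\mi}$ is essentially free (the surviving disjuncts of $C$ are $x\geq y\vee x>y_1$); the nontrivial half is that every triple of $R^{\mi}$ is realized. This is where $t_1$ (for the branch $a\geq b$) and a second tuple $t_2$, witnessing that the literal $x>y_1$ cannot be removed from the reduced definition (for the branch $a>c$), are applied via automorphisms of $(\Q;<)$. Without exhibiting both tuples the pp-formula could define a proper subset of $R^{\mi}$ --- precisely the ``degenerate failure'' you mention but do not rule out. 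So the obstacle you flag is real, and the tools you list (reducedness of the CNF plus the $\ll$-template) do not by themselves close it; the missing ingredient is the pair of witnessing tuples extracted from the relation-level failure of $\ll$-preservation and from irredundancy.
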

\begin{proof} Let $R$ be a relation of $\fA$ which is not preserved by $\ll$. As $R$ is preserved by $\mi$, Theorem~\ref{thm:SNF_base}~\ref{item:SNF_Mi} implies that $R$ can be defined by a conjunction $\phi$ of clauses of the form
  \begin{displaymath}
     x \geq y \vee  \bigvee_{i=1}^m x > y_i \vee \bigvee_{i=1}^n x\neq z_i .
  \end{displaymath}
  We may assume that the literals
  $x > y_1,\dots,x > y_{m}$ cannot be removed from 
  such clauses without changing the relation defined by the formula. 
  As $R$ is not preserved by $\ll$,  Theorem~\ref{thm:SNF_base}~\ref{item:SNF_Ll} implies that $\phi$
 must contain a conjunct $C$ of the form $x \geq y \vee  \bigvee_{i=1}^m x > y_i \vee \bigvee_{i=1}^n x\neq z_i$ where $m \geq 1$. 
 Assume for contradiction that $\phi \wedge x=y$ implies $x=y_1=\dots =y_m \OR \bigvee_{i=1}^m x>y_i \OR \bigvee_{i=1}^n x \neq z_i$. 
 Then we can replace $C$ by $x > y \OR \bigvee_{i=1}^m x>y_i \OR x=y=y_1=\dots=y_m \OR \bigvee_{i=1}^n x \neq z_i$. However, if this is possible for all $C$ with $m\geq 1$, then $R$ is preserved by $\ll$, contradiction. So we may suppose that there exists a tuple $t_1 \in R$ and $j \in [m]$ such that
 \begin{align*}
    t_1  \text{ satisfies }\quad & x=y \AND x<y_j \AND \bigwedge_{i \neq j} x \leq y_i \AND \bigwedge_{i=1}^n x=z_i.
 \end{align*}  
  For the sake of notation, we assume that $j=1$. 
  As the literal $x>y_1$ 
  can not be removed from $C$ without changing the relation defined by $\phi$, 
   there is a tuple $t_2 \in R$ such that
  \begin{align*}
    t_2  \text{ satisfies }\quad & y > x \AND x > y_1 \AND  \bigwedge_{i\neq j} x \leq y_i  \AND \bigwedge_{i=1}^n x=z_i.    
  \end{align*}
  We may assume that $x,y,y_1, \dots, y_m, z_1, \dots, z_n$ refer to the first $2+m+n$ coordinates of $R$, in that order. Choose $k\in \N$ such that  $2+m+n+k$ is the arity of $R$ and let $u_1, \dots, u_k,y',z$ be fresh variables. The following is a primitive positive definition of $R^{\mi}$ in $\fA$:
  \begin{align*} 
    \psi(x,y',z) %y,y_1) 
    \ceq \; \exists & y,y_1,y_2,\dots, y_m, z_1, \dots, z_n, u_1, \dots, u_{k} \Big( y' \leq y \wedge z \leq y_1 \\
                  & \wedge  \, R(x,y,y_1,\dots, y_m, z_1, \dots, z_n, u_1, \dots, u_{k})
                   \AND \bigwedge_{i=2}^m x \leq y_i  \AND \bigwedge_{i=1}^n x=z_i\Big)
  \end{align*}
  %u_{\ar(R)-m-n-2}
 To see this, first note that the quantifier-free part of $\psi$
 implies that $x \geq y \vee x > y_1$,
 and hence that $x \geq y' \vee x > z$. 
 
 Conversely, choose $(a,b,c)\in R^{\mi}$. If $a \geq b$ then choose $\alpha \in \Aut(\Q;<)$ such that $\alpha(t_1[x])=a$ and $\alpha(t_1[y_1]) \geq c$ and set $y'=b$ and $z=c$. This is possible because $t_1[y_1] > t_1[x]$.
 Then $\alpha(t_1)$ provides values for $y,y_1, \dots, u_k$ which satisfy  
 all conjuncts of $\psi$: the conjunct $R(x,y,y_1,\dots)$ is satisfied because $\alpha(t_1)\in R$, and for the other conjuncts this is immediate. Hence, $\psi(a,b,c)$ holds.
  If $a > c$ then choose $\alpha \in \Aut(\Q;<)$ such that $\alpha(t_2[x]) = a$, $\alpha(t_2(y)) \geq b$ and $\alpha(t_2[y_1]) = c$, $y'=b$ and $z=c$. This is possible because $t_2[y] > t_2[x] > t_2[y_1]$.
Then $\alpha(t_2)$ provides values for $y,y_1, \dots, u_k$ which satisfy  
 all conjuncts of $\psi$:
the conjunct $R(x,y,y_1,\dots)$ is satisfied because $\alpha(t_2)\in R$
and for the other conjuncts this is immediate. 
\end{proof}

\begin{lem}\label{thm:mixClosureFromLeq}
%  Let $g$ be a binary operation that preserves $<$  and does not preserve $\leq$. Then the clone generated by $\Aut(\Q;<) \cup \{\mi, g\}$ contains a binary operation $f$ 
Let $\fA$ be a first-order expansion of $(\Q;<)$ 
which is preserved by $\mi$ and where $\leq$ is not primitively positively definable. Then
$\fA$ has a binary polymorphism $f$ 
such that for all positive $a_1, a_2,b_1,b_2\in \Q$
  \begin{align}   
    &2 = f(0,0) > f(0, b_1) = 1 = f(0, b_2) > f(a_1,0) = f(a_2,0) = 0.  \label{eq:mixClosure}
  \end{align}
\end{lem}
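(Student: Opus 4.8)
The plan is to exploit the failure of primitive positive definability of $\leq$ via the Galois connection of Theorem~\ref{thm:InvPol}: since $\leq$ is not pp-definable in $\fA$, it is not preserved by $\Pol(\fA)$, so there is some polymorphism $g$ of $\fA$ and tuples $s_1,\dots,s_k \in {\le}$ with $g(s_1,\dots,s_k) \notin {\le}$, i.e.\ $g$ produces a strict decrease out of non-decreases. First I would reduce to a binary witness: because $\fA$ is preserved by $\mi$ and $\mi$ generates (locally, together with $\Aut(\Q;<)$) a clone in which, by Theorem~\ref{thm:SNF_base}~\ref{item:SNF_Mi}, the relations have a particular syntactic shape, one can compose $g$ with $\mi$ and automorphisms to extract a \emph{binary} operation $f \in \Pol(\fA)$ that still violates $\leq$ on some pair of pairs; concretely, $f(p,q)$ is minimal (equivalently lands in the "decrease" position) while $p$ and $q$ are each non-decreasing in the two relevant coordinates. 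The standard way to see this is that $\Pol(\fA)$ is locally closed and contains $\mi$, and a binary "bad" operation for $\leq$ can always be obtained by a suitable binary minor of a higher-arity one after conjuring automorphisms on each argument — this is exactly the kind of argument used for $\mix$ in Section~\ref{sec:mix}.

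Next I would normalise this binary $f$ using the homogeneity of $(\Q;<)$ and local closure of $\Pol(\fA)$. Having a pair $(p,q)$ of inputs on coordinates $(i,j)$ with $p_i \le p_j$, $q_i \le q_j$ but $f(p,q)_i > f(p,q)_j$, I would pre-compose $f$ with automorphisms $\alpha,\beta \in \Aut(\Q;<)$ acting on the two arguments so that the coordinate pattern is placed as in~\eqref{eq:mixClosure}: send the "decrease" coordinate in the first argument to $0$ and the larger one to a positive value, and in the second argument also send the relevant coordinate to $0$. The point is to arrange that $f$, restricted to the set $\{0\} \cup \Q^{+}$ in each argument, realises the sign pattern $f(0,0)=2$, $f(0,b)=1$ for $b>0$, $f(a,0)=0$ for $a>0$. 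The ordering $2>1>0$ of the outputs is then forced: $f(0,0)$ must be strictly larger than $f(0,b)$ and $f(a,0)$ because the input tuples $(0,0),(0,b)$ differ only by an increase in the second coordinate, etc.; one checks these order relations directly from the assumed violation of $\leq$ and from the fact that $f$ preserves $<$ (being a polymorphism of $\fA$, which expands $(\Q;<)$). After composing with one more automorphism on the output to fix the concrete values $0,1,2$, we obtain a single $f$ working simultaneously for \emph{all} choices of positive $a_1,a_2,b_1,b_2$, because the order type of a positive-or-zero input tuple depends only on which coordinates are $0$ and the relative order of the rest, and we have only two "levels" (zero and positive) on each argument; local closure then upgrades "for every finite set of positives there is such an $f$" to a single $f$.

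The main obstacle I expect is the reduction from an arbitrary-arity, arbitrary-many-inputs violation of $\leq$ down to a \emph{binary} operation with exactly the clean two-level behaviour in~\eqref{eq:mixClosure}, while ensuring $f$ stays in $\Pol(\fA)$ rather than merely in $\Pol(\leq)^{c}$. The delicate point is that $f$ must be a polymorphism of all of $\fA$, so one cannot freely post-compose with arbitrary maps — only with automorphisms of $(\Q;<)$ (which suffice, since $\Aut(\Q;<) \subseteq \Pol(\fA)$) and one must take binary minors of an existing polymorphism rather than an arbitrary binary map. Handling the bookkeeping so that the \emph{same} $f$ works for all positive $a_1,a_2,b_1,b_2$ (rather than one $f$ per finite configuration) is where local closure of $\Pol(\fA)$ and a pointwise-limit / König-type argument enter; once that is in place, verifying~\eqref{eq:mixClosure} itself is the routine calculation of tracking a fixed sign pattern through the binary minor. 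This lemma is then exactly what is needed in Section~\ref{sec:mi} to conclude that $\mix \in \Pol(\fA)$ (via Proposition~\ref{thm:produce-mix}).
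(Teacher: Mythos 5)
Your opening moves match the paper's: use Theorem~\ref{thm:InvPol} to obtain a polymorphism violating $\leq$, and reduce to a binary one (the paper cites Lemma~10 of \cite{tcsps-journal}; since $\leq$ is a union of two orbits, a binary violating polymorphism exists). The gap is in the normalisation step. You claim that pre- and post-composing the binary witness $g$ with automorphisms of $(\Q;<)$, together with local closure, yields the pattern~\eqref{eq:mixClosure}. This cannot work as stated, for two reasons. First, \eqref{eq:mixClosure} demands that $f$ be \emph{constant} on the rays $\{(0,b)\mid b>0\}$ and $\{(a,0)\mid a>0\}$; automorphisms are order-preserving injections, so composing with them can never create constancy where $g$ had none, and your appeal to local closure presupposes that for each finite $S\subseteq \Q$ of positive values there is already a polymorphism constant on $\{0\}\times S$ and on $S\times\{0\}$ with the right ordering --- which is exactly what has to be constructed. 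Second, the inequalities $f(0,0)>f(0,b)$ and $f(0,b)>f(a,0)$ do not ``follow from the assumed violation of $\leq$ and preservation of $<$'': the violation of $\leq$ is witnessed at one particular pair of argument tuples, preservation of $<$ gives no information when a coordinate is held fixed or when the two input pairs are incomparable, and nothing forces the violation to occur simultaneously along an entire ray.

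The missing idea is to use the hypothesis $\mi\in\Pol(\fA)$ not merely for arity reduction but for \emph{flattening}. The paper first normalises the binary witness by automorphisms to $g'$ with $g'(0,0)=1$ and $g'(1,0)=0$, then sets $g''(x,y)\ceq g'(\mi(x,y),y)$; since $\mi(c,0)$ takes a single value for all $c>0$ and $\mi(0,c)$ takes a single larger value for all $c>0$, this collapses one ray to the constant $0$ and, by monotonicity of $g'$, places the other ray strictly above $g''(0,0)$. A second application, $f(x,y)\ceq\mi(g''(y,x),g''(x,y))$, eliminates the remaining dependence on the positive parameter and produces the three-level pattern $3>2>1$, which a final shift by the automorphism $x\mapsto x-1$ turns into~\eqref{eq:mixClosure}. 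Without invoking $\mi$ (or some other available polymorphism that is constant on positive rays in one argument) in this way, there is no route from a bare binary violation of $\leq$ to the exact statement of the lemma.
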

\begin{proof}
By Theorem~\ref{thm:InvPol} there exists
a polymorphism of $\fA$ that does not preserve $\leq$. There is also a binary polymorphism $g$ with this property, by Lemma~10 in~\cite{tcsps-journal}. 
  We can without loss of generality assume that there exist $p_1, p_2,q\in \Q$ such that $p_1 < p_2$ and $g(p_1, q) > g(p_2,q)$. Define $g' \ceq \gamma g(\alpha, \beta)$ with $\alpha,\beta,\gamma\in\Aut(\Q;<)$ such that $\alpha^{-1}(p_1, p_2) = (0,1)$, $\beta^{-1}(q) = 0$, and $\gamma(g(p_1, q), g(p_2,q)) = (1,0)$. Then $g'(0,0)= 1$ and $g'(1,0) = 0$. Defining $g''(x,y) \ceq  g'( \mi(x,y), y)$ we get 
  $g''(0,0) = g'(0,0) = 1$ and for all $c>0$
  we get
  %\begin{align*}
  $g''(c, 0) = g'(1,0) = 0$
  % \text{ and } 
  and $g''(0,c) = g'(2,c) =: d > 1$.
  %\end{align*}
  Defining $f(x,y) \ceq \mi(g''(y,x), g''(x,y))$ we get $f(0,0) = \mi(1,1) = 3$, and for all $c>0$ we get 
  $f(c,0)= \mi(d,0) = 1$, and $f(0,c) = \mi(0,d) = 2$.
  As $x\mapsto x-1$ is in $\Aut(\fA)$, the function $(x,y) \mapsto f(x,y)-1$ satisfies~\eqref{eq:mixClosure}.
\end{proof}

The following proposition  is similar to Proposition~10.5.13 in~\cite{Bodirsky-HDR-v8}.

\begin{prop}\label{thm:produce-mix}
Let $\fA$ be a temporal structure preserved by $\pp$ such that $\leq$  does not have a primitive positive definition in $\fA$. 
Then $\fA$ is preserved by $\mix$. 
\end{prop}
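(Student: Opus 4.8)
The plan is to use the characterisation of temporal relations preserved by $\pp$ supplied by Theorem~\ref{thm:SNF_base}~\ref{item:SNF_Pp}: every relation $R$ of $\fA$ is a conjunction of clauses of the form $x_1 \circ_2 x_2 \OR \dots \OR x_1 \circ_n x_n$ with $\circ_i \in \{\neq,\geq\}$. To show $\mix$ is a polymorphism of $\fA$ it suffices to show that $\mix$ preserves every such clause $C$. Since $\mix$ is not itself in $\Pol(\fA)$ by hypothesis (we only know this after invoking $\leq \notin \langle\fA\rangle_{\mathrm{pp}}$ — actually we want the \emph{converse}), the real work is: assuming $\leq$ is not pp-definable, deduce enough about the clauses of $\fA$ that $\mix$ must preserve all of them. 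The key leverage is Lemma~\ref{thm:mixClosureFromLeq} together with Proposition~\ref{thm:produce-mix}'s sibling facts: since $\pp \in \Pol(\fA)$ we have $\min,\mi \in \Pol(\fA)$ are \emph{not} guaranteed, but we are told by Theorem~\ref{thm:foundational2} that either $T_3$ is pp-definable or $\fA$ is preserved by $\mi$, $\mx$, or $\min$; I would first dispose of the sub-case where $\mi$ is a polymorphism (there Lemma~\ref{thm:mixClosureFromLeq} directly produces, together with $\mi$ and suitable automorphisms, a binary operation agreeing with $\mix$ on arbitrarily large finite sets, hence $\mix \in \Pol(\fA)$ by local closure), and then argue the remaining cases cannot occur.

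More precisely, here is the order of steps I would carry out. (1) Observe that if $T_3$ were pp-definable in $\fA$ then, since $T_3$ is not preserved by $\ll$ nor by $\mi,\mx,\min$ (stated after Theorem~\ref{thm:bod-kara}) and in particular $\leq$ would then be pp-definable — actually I need to check whether $\leq$ is pp-definable from $T_3$; if so this contradicts the hypothesis, eliminating this branch; if not, I would instead directly show $\mix$ does not preserve $T_3$ is irrelevant and instead handle $T_3$ via the clause form. (2) By Theorem~\ref{thm:foundational2}, reduce to $\fA$ being preserved by $\mi$, $\mx$, or $\min$. (3) In the case $\mi \in \Pol(\fA)$: apply Lemma~\ref{thm:mixClosureFromLeq} to get a binary $f \in \Pol(\fA)$ satisfying~\eqref{eq:mixClosure}; then, exactly as in the proof of Lemma~\ref{thm:mixGeneratesMi}, check that a suitable composition of $f$, $\mi$, and automorphisms of $(\Q;<)$ realises the weak linear order of $\mix$ on $(\Z^+)^2$, hence on every finite $S \subseteq \Q$ after pre/post-composing with automorphisms; conclude $\mix \in \overline{\langle \Pol(\fA)\rangle} = \Pol(\fA)$. (4) In the cases $\mx \in \Pol(\fA)$ or $\min \in \Pol(\fA)$ but $\mi \notin \Pol(\fA)$: use Theorem~\ref{thm:mixedmx} (for $\mx$) to see $X$ is pp-definable or $\fA$ is preserved by a constant or by $\min$; in all remaining situations show that $\leq$ is in fact pp-definable, contradicting the hypothesis — e.g. if $\fA$ is preserved by $\min$ but not $\mi$, the reduced-CNF normal form of Theorem~\ref{thm:SNF_base}~\ref{item:SNF_Min} forces a clause $x_1 > x_2$ (a single $>$-literal), and combined with the $\pp$-form one extracts a pp-definition of $\leq$; similarly for the $\mx$-but-not-$\mi$ case using that $X$ together with $<$ pp-defines $\leq$.

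The main obstacle I expect is step (4): cleanly ruling out the $\mx$-only and $\min$-only sub-cases, i.e.\ showing that if $\leq$ is \emph{not} pp-definable then $\mi$ \emph{must} be a polymorphism of $\fA$. This is essentially a statement that among first-order expansions of $(\Q;<)$ preserved by $\pp$, loss of pp-definability of $\leq$ already forces the much stronger symmetry encoded by $\mix$ (which dominates $\mi$ by Lemma~\ref{thm:mixGeneratesMi}); the delicate part is handling the $\mx$ branch, where $\mx$ and $\mi$ are incomparable in general, so one genuinely needs that the \emph{conjunction} of $\pp$-preservation and non-pp-definability of $\leq$ collapses into the $\mi$ regime. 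I would prove this by contradiction: assume $\mx \in \Pol(\fA)$, $\mi \notin \Pol(\fA)$, and $\leq$ not pp-definable; by Theorem~\ref{thm:mixedmx} either $X$ is pp-definable (then show $X$ together with the relations of $\fA$ pp-defines $\leq$, using that $\mx \in \Pol(\fA)$ means $\fA$'s relations have the $\GF_2$-linear form of Theorem~\ref{thm:SNF_base}~\ref{item:SNF_Mx}, so some relation must fail to be preserved by $\mi$ and a short syntactic argument recovers $\leq$), or $\fA$ is preserved by a constant (then $\leq$ is trivially handled — but a constant polymorphism is incompatible with $\fA$ expanding $(\Q;<)$, since $<$ is irreflexive, ruling this out) or by $\min$ (reducing to the $\min$ branch). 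Once step (4) is secure, the proposition follows from steps (1)–(3) with only the routine verifications from Lemma~\ref{thm:mixGeneratesMi}-style composition arguments remaining.
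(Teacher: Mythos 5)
Your route differs fundamentally from the paper's, and two of its steps fail. The paper does not case-split via Theorem~\ref{thm:foundational2} at all: it fixes a relation $R$ and $r,s\in R$, stratifies the coordinates of $\mix(r,s)$ by the value of $\min(r_i,s_i)$, uses the polymorphism $f$ of Lemma~\ref{thm:mixClosureFromLeq} together with $\pp$ and automorphisms to produce, for each level $j$, a tuple $z^j\in R$ realising the $\mix$-ordering at that level, and then glues the $z^j$ into a single tuple of $R$ with the order type of $\mix(r,s)$ via Lemma~10.5.3 of~\cite{Bodirsky-HDR-v8}. Your step~(3) compresses all of this into ``a suitable composition of $f$, $\mi$ and automorphisms realises the weak linear order of $\mix$ on $(\Z^+)^2$, exactly as in Lemma~\ref{thm:mixGeneratesMi}''. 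The analogy does not carry over: in Lemma~\ref{thm:mixGeneratesMi} the operation $\mix$ is globally specified, whereas \eqref{eq:mixClosure} pins $f$ down only on the two rays emanating from $(0,0)$, i.e.\ at a single level. Realising the order of $\mix$ on $\{0,\dots,k\}^2$ requires simultaneous control of $k+1$ levels, and obtaining that from the one-level information is exactly the content of the level-by-level construction and the gluing lemma; this is the substantive part of the proof and it is absent from your proposal.

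The obstacle you flag in step~(4) is worse than you anticipate: the $\mx$-but-not-$\mi$ branch cannot be eliminated. Your plan there is to show that $X$ together with $<$ pp-defines $\leq$; it does not, since $\mx$ preserves $<$ and $X$ but not $\leq$ (apply $\mx$ componentwise to the pairs $(0,0)$ and $(0,1)$, both in $\leq$, to get $(\mx(0,0),\mx(0,1))=(1,0)\notin\,\leq$). In fact $(\Q;<,X)$ is preserved by $\pp$, because $X$ is defined by $(x\geq y\vee x\geq z)\wedge(y\geq x\vee y\geq z)\wedge(z\geq x\vee z\geq y)\wedge(x\neq y\vee x\neq z)$, a conjunction of clauses of the form in Theorem~\ref{thm:SNF_base}~(\ref{item:SNF_Pp}); the relation $\leq$ is not pp-definable in it; and yet it is not preserved by $\mix$, since $\mix$ locally generates $\mi$ (Lemma~\ref{thm:mixGeneratesMi}) and $\mi$ does not preserve $X$ (apply $\mi$ to $(0,0,1),(1,0,0)\in X$ to get $(2,0,1)\notin X$). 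So no argument can close that branch from the stated hypotheses alone: the conclusion genuinely requires $\mi\in\Pol(\fA)$, which is the hypothesis of Lemma~\ref{thm:mixClosureFromLeq} that the paper's proof invokes and under which the proposition is actually applied in Proposition~\ref{thm:mi}. Your instinct that everything must ``collapse into the $\mi$ regime'' correctly describes where the statement holds, but it is not derivable from preservation by $\pp$ and non-definability of $\leq$; your step~(4) would therefore fail, and with it the reduction to step~(3).
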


\begin{proof}
Let $R$ be a $k$-ary relation of $\fA$ and $r,s\in R$. We have to show that $t \ceq \mix(r,s)$ is in $R$. Let $\alpha,\beta,\gamma \in \End(\Q;<)$ be from the definition of $\mix$. Let $v_1 < \dots < v_l$ be the shortest sequence of rational numbers such that $t_i \in \bigcup_{j\in [l]} \set{\alpha(v_j), \beta(v_j), \gamma(v_j)}$ for every $i\in [k]$. For every $j\in [l]$ we define
\begin{displaymath}
 M_j \ceq  \big \{i\in [k] \mid t_i \in \set{\alpha(v_j), \beta(v_j), \gamma(v_j)} \big\}.
\end{displaymath}
Observe that $M_1, \dots, M_l$ is a partition of $[k]$ and therefore defines a partition on $\{t_1, \dotsc, t_k\}$. Furthermore, for each $i\in M_j$ either $v_j = r_i \leq s_i$ or $v_j = s_i \leq r_i$ holds. This defines a partition of $M_j$ into three parts:
\begin{align*}  \label{eq:3}
  M_j^{\alpha} & \ceq \{i\in M_j \mid v_j = r_j < s_j\},\\
  M_j^{\beta} & \ceq\{i \in M_j \mid v_j = r_j = s_j \}, \\
\text{and } \quad  M_j^{\gamma} & \ceq \{i \in M_j \mid v_j = s_j < r_j\}.
\end{align*}
  Let $\alpha_1,\dots, \alpha_l \in \Aut(\Q;<)$ be such that $\alpha_j(v_j) = 0$ for all $j \in [l]$. 
  By Lemma~\ref{thm:mixClosureFromLeq} there is a binary $f \in \Pol(\fA)$ 
  satisfying~\eqref{eq:mixClosure}. 
  For each $j \in [l]$ we define
  \begin{displaymath}
    u^j \ceq \pp \big (f(\alpha_j r, \alpha_j s), \pp(\alpha_j s, \alpha_j r) \big) %%%
  \end{displaymath}
  It is easy to verify that for all $i \in M_j$ and $w,w' >0$ 
  \begin{align*}    \label{eq:2}
    \text{if } i \in M^{\alpha}_j \text{ then } & u^j_i = \pp(f(0,w), \pp(w', 0)) = \pp(1, \pp(1,0)),\\
    \text{if } i \in M^{\beta}_j \text{ then }  & u^j_i = \pp(f(0,0), \pp(0,0)) = \pp(2,\pp(0,0)), \\
\text{and if } i \in M^{\gamma}_j \text{ then }  & u^j_i = \pp(f(w, 0), \pp(0,w')) = \pp(0,0).
  \end{align*}
In particular, $u^j$ is constant on each of $M_j^{\alpha}$, $M_j^{\beta}$, $M_j^{\gamma}$   and $u^j_i > u^j_{i'}$ for $i \in M^{\alpha}_j$ and $i' \in M^{\beta}_j$. 
We apply $f$ again to obtain 
$z^j \ceq f(\alpha_j r, \beta_j u^j)$ 
where $\beta_j \in \Aut(\Q;<)$ is such that 
$\beta_j(\pp(2, \pp(0,0))) = 0$. 
Then we get
for all $i \in M_j$ and $w > 0$  that 
\begin{align*}
   \text{if } i \in M^{\alpha}_j \text{ then } & z^j_i = f(0,w) = 1, \\
   \text{if } i \in M^{\beta}_j \text{ then } & z^j_i = f(0,0) = 2, \\
   \text{and if } i \in M^{\gamma}_j \text{ then } & z^j_i = f(w,e) < f(0, e') =0 \text{ for some } e' < e < 0.
\end{align*}
Thus, we found $z^1, \dots, z^l \in R$ such that 
for all $i \in M_j^{\beta}$, $i' \in M_j^{\alpha}$, and 
$i'' \in M_j^{\gamma}$ we have 
$z^j_i > z^j_{i'} > z^j_{i''}$.
Take any $j,j'\in [l]$ such that $j < j'$ and choose $i \in M_j^{\beta}$ and $i' \in M_{j'}$. Then $v_j = r_i = s_i < v_{j'} = \min(s_{i'}, r_{i'})$ and therefore $z^j_i < z^{j'}_{i'}$ because $f$, $\pp$, and all automorphisms preserve $<$. 
Therefore, we can apply Lemma~10.5.3 in~\cite{Bodirsky-HDR-v8} to $z^1, \dots, z^l$ which yields the existence of a tuple $t^* \in R$ with satisfies
$t^*_i < t^*_{i'}$ if and only if there exists $j<j'$ such that $i \in M_j, i' \in M_{j'}$, and $z^j_i < z^{j'}_{i'}$.
However, this is the same ordering that $t$ satisfies and hence, $t\in R$.
\end{proof}

\begin{prop}\label{thm:mix}
Let $\fA$ be a first-order expansion of $({\mathbb Q};<)$ preserved by 
$\mix$ but not by $\ll$. Then
% either $R$ is preserved by $\ll$ or 
$\J$ has a primitive positive definition in $\fA$.
\end{prop}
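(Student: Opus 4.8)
The plan is to use the syntactic normal form for $\mix$-preserved relations established in Theorem~\ref{thm:SNF_Mix}. Since $\fA$ is preserved by $\mix$ but not by $\ll$, there must be a relation $R$ of $\fA$ that is not preserved by $\ll$. By Lemma~\ref{thm:mixGeneratesMi}, $\mix$ generates $\mi$, so $R$ is preserved by $\mi$; hence $R$ has a definition by a conjunction of clauses of the two types in Theorem~\ref{thm:SNF_Mix}, namely clauses of the form \eqref{eq:mixNf1} and clauses $\phi^{\mix}_n$ of the form \eqref{eq:Jn}. The clauses of the form \eqref{eq:mixNf1} only involve $\neq$ and $>$, and such clauses are preserved by $\ll$ by Theorem~\ref{thm:SNF_base}~\ref{item:SNF_Ll}; therefore the failure of $\ll$-preservation must come from (the conjunction of) the $\phi^{\mix}_n$-clauses. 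So the definition of $R$ must genuinely involve at least one clause $\phi^{\mix}_n$, equivalently $R$ primitively positively defines (or at least ``contains'') the relation $\J_n$ in a usable way.

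The key step is then to extract $\J$ (equivalently $\J_3 = \J$) by a primitive positive definition starting from $\phi^{\mix}_n$. First I would observe that from a relation whose definition requires a clause $\phi^{\mix}_n(x_1,x_2,x_3,\dots,x_n)$, one obtains $\J_n$ itself primitively positively in $\fA$: intersect $R$ with suitable equalities/diagonals to kill all the other clauses and all the $\neq/>$-clauses (this is where one uses that the definition is in a reduced form, so the clause $\phi^{\mix}_n$ cannot be dropped), leaving exactly $\J_n$ on the relevant coordinates, possibly after identifying or existentially quantifying auxiliary variables. Then, by Lemma~\ref{thm:JgeneratesLongJ}, $\J_n$ has a primitive positive definition in $(\Q;<,\J)$; but running that argument \emph{backwards}—or rather, noting that $\J_n(x_1,x_2,x_1,x_4,\dots)$ and similar fusions/identifications of coordinates of $\J_n$ collapse it down to $\J = \J_3$—gives a primitive positive definition of $\J$ from $\J_n$ together with $<$ (which is available since $\fA$ is a first-order expansion of $(\Q;<)$; note $\mix$ preserves $<$, so $\leq$ or at least $<$ is at hand). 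Concretely, identifying $x_3 = \dots = x_n$ in $\J_n$ yields $\J_3$, so $\J$ is primitively positively definable in $(\Q; \J_n)$.

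The main obstacle I anticipate is the bookkeeping in the first extraction step: showing that one can legitimately isolate a single $\phi^{\mix}_n$-clause from the conjunction defining $R$ by primitive positive operations. This requires arguing that, after passing to a reduced conjunction, there is a clause $\phi^{\mix}_n$ that ``cannot be removed,'' and then substituting concrete values/variable identifications into the other clauses so that they become trivially true while $\phi^{\mix}_n$ survives. This is the kind of argument carried out in detail in the proof of Theorem~\ref{thm:SNF_Mix} (the analysis of $U_\phi$ and the tuples $t_j$), and I would expect to reuse that machinery: if every $\phi^{\mix}_n$-clause could be ``absorbed'' or weakened to a $\neq/>$-only clause without changing $R$, then $R$ would be preserved by $\ll$, contradicting the choice of $R$. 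Once that obstruction is cleared, the remaining steps—deriving $\J_n$ pp-definably and then collapsing $\J_n$ to $\J_3 = \J$ by identifying the tail coordinates—are routine.
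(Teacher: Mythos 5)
Your opening moves match the paper's: take a relation $R$ of $\fA$ not preserved by $\ll$, apply Theorem~\ref{thm:SNF_Mix} (via Lemma~\ref{thm:mixGeneratesMi}) to write $R$ as a conjunction of clauses of the forms \eqref{eq:mixNf1} and \eqref{eq:Jn}, and observe that clauses of the form \eqref{eq:mixNf1} are preserved by $\ll$, so at least one clause $\phi^{\mix}_n(x,y,z_1,\dots,z_n)$ must be present. The collapse $\J_n \to \J_3$ by identifying tail coordinates is also fine. The problem is that everything in between --- ``intersect $R$ with suitable equalities/diagonals to kill all the other clauses, leaving exactly $\J_n$'' --- is precisely where the entire difficulty of the proposition lives, and your sketch of it does not work as stated. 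Identifying variables is dangerous because other clauses of the conjunction may share variables with $\phi^{\mix}_n$, so the identifications that trivialise them can simultaneously degenerate $\phi^{\mix}_n$ itself; and reducedness of the CNF is far too weak a tool here: it only guarantees, for each literal, \emph{some} tuple witnessing its necessity, with no control over how that tuple sits relative to the remaining variables.

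What the paper actually does is different in mechanism: it does not identify variables but adds the side constraints $\bigwedge_{i\ge 2}(x<z_i \wedge y<z_i)$ and $z<z_1$ and existentially quantifies everything except $x,y,z$. For this $\psi'$ to define $\J$ one must show that $R$ contains tuples $t_c^*$, $t_{x,1}^*$, $t_{y,1}^*$ realising the three orbit types of $\J$ \emph{with strict} inequalities against all of $z_2,\dots,z_n$ (e.g.\ $x=y \wedge \bigwedge_i x<z_i$, and $z_1<x<y\wedge\bigwedge_{j\ne 1} y<z_j$). Reducedness only yields witnesses with non-strict inequalities ($x\le z_j$), which is not enough for $\psi'$ to be satisfied in all orbits of $\J$. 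Producing the starred tuples requires a dedicated argument: first two replacement/removal claims tailored to the two halves $C_x$, $C_y$ of $\phi^{\mix}_n$ (showing the literal $x\ge y$ cannot be weakened to $x>y$ and no $x>z_i$ can be dropped), and then several compositions of $\mix$ with automorphisms of $(\Q;<)$ to upgrade the weak witnesses to the strict ones. This is new work, not a reuse of the $U_\phi$ analysis from the proof of Theorem~\ref{thm:SNF_Mix}, so flagging the obstacle and deferring to that machinery leaves a genuine gap.
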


\begin{proof}
Let $R$ be a relation in $\fA$ that is not preserved by $\ll$. 
Lemma~\ref{thm:SNF_Mix} implies that
 $R$ can be defined by conjunctions of clauses the form~\eqref{eq:mixNf1} and~\eqref{eq:Jn}. As $R$ is not preserved by $\ll$, any such definition  must include at least on clause of the form~\eqref{eq:Jn}. Consider a clause of the form~\eqref{eq:Jn}, written in CNF $\phi^{\mix}_n = C_x \AND C_y$ with
    \begin{align*}
      C_x &\ceq \left(x \geq y \OR \bigvee_{i=1}^n x > z_i \right) & 
      C_y &\ceq \left(y \geq x \OR \bigvee_{i=1}^n y > z_i \right)  .
    \end{align*}
{\bf Claim 1.} Suppose that the literal $x \geq y$ can be replaced by $x > y$ in $C_x$ without changing the relation defined by $\phi$.
Then we can also replace the literal $y \geq x$ by $y > x$ in $C_y$ without changing the relation defined by $\phi$. 

The assumption implies that if $x\geq y$ is satisfied by a tuple $t\in R$ then either $t$ satisfies $x > y$, or $t$ satisfies $x=y$ and there exists $i$ such that $t$ satisfies $x>z_i$. In the first case $t$ satisfies $y>z_j$ (in order to satisfy $C_y$) and hence $t$ still satisfies $\phi$ after replacing $y\geq x$ by $y>x$ in $C_y$.
 In the second case,  % 
  $t$ satisfies $y=x>z_i$ and thus again satisfies $C_y$ after the same replacement. 

{\bf Claim 2.} 
Suppose that for some $i \in [n]$, the literal $x>z_i$ can be removed from $C_x$ without changing the relation defined by $\phi$. Then $y > z_i$ can be removed from $C_y$ without changing the relation defined by $\phi$.

  Case 1: All tuples $t\in R$ satisfy $x \leq z_i$, i.e., $x>z_i$ is never true.
    If there is $t\in R$ such that $t$ satisfies $y>z_i$, then $t$ also satisfies $y>x$. Hence, we can also remove %$x>z_i$ and 
    $y>z_i$ from $C_y$ without altering the relation defined by the formula.
    
    \begin{table}
    \begin{center}
         $\begin{array}{r|rrrr}
         & \multicolumn{1}{c}{x} & \multicolumn{1}{c}{y} & \multicolumn{1}{c}{z_i} & \multicolumn{1}{c}{z_{j\neq i}}\\ \hline
        t'\ceq \alpha_1(t_{y,i}) & 2 & 1 & 0 & \geq 1  \\
        t_c' \ceq \alpha_2(t_c) & 1 & 1 & \geq 1 & \geq 1\\ \hline
  \mix(t',t_c')& 3 & 5 & 1 & \geq 3 
          \end{array}$
    \end{center}  
    \caption{Calculation for Claim 2 (Case~2) in the proof of Proposition~\ref{thm:mix}.}
        \label{tab:MixJnPruning}
    \end{table}
    
   Case 2: There exists $t \in R$ where $x > z_i$ holds. 
    Suppose for contradiction that there exists a tuple $t_{y,i}\in R$ which does not satisfy $C_y$ after deletion of $y>z_i$ in $C_y$. Then
    $$t_{y,i} \text{ satisfies } x > y \AND y > z_i \AND \bigwedge_{j\neq i} z_j \geq y.$$
    As we already know that literal replacement can be applied to $C$ (Claim 1), we can assume that no literal in $\phi$ can be replaced. Therefore, there exists $t_c\in R$ such that
    \begin{displaymath}
      t_c \text{ satisfies }\quad x=y \wedge \bigwedge_{i=1}^n x\leq z_i.
    \end{displaymath}
    Then there exist $\alpha_1, \alpha_2\in \Aut(\Q;<)$ such that $\mix(t_{y,i}, t_c)$ satisfies $y>x \AND x>z_i \AND \bigwedge_{j\neq i} z_j\geq x$ (see Table~\ref{tab:MixJnPruning}), contradicting the assumption that we can remove $x>z_i$.

Claims 1 and 2 imply that we may assume without loss of generality that the literal $x \geq y$ cannot be replaced by $x > y$,
 that the literal $x > z_i$ cannot be removed from $C_x$ and, symmetrically, that $y > z_i$ cannot be removed from $C_y$ without changing the relation defined by $\phi$. 
Hence, there are $t_c, t_{x,i}, t_{y,i} \in R$ such that for all $1 \leq i \leq n$
\begin{align*}
    t_c \text{ satisfies }\quad & x=y \wedge \bigwedge_{i=1}^n x\leq z_i,  &   t_{x,i} \text{ satisfies }\quad & z_i < x < y \AND \bigwedge_{j\neq i} x\leq z_j, \\
 && \quad \text{ and $t_{y,i}$  satisfies }\quad & z_i < y < x \AND \bigwedge_{j\neq i} y\leq z_j.
\end{align*}
Now we apply automorphisms and $\mix$ to $t_c$, $t_{x,i}$, and $t_{y,i}$ to prove that $R$ contains tuples with more specific properties. 
We first prove that $R$ must contain a tuple $t_{c}^*$ satisfying 
\begin{align}
x=y \AND \bigwedge_{i \in [n]} x< z_i.
\label{eq:goal}
\end{align}
Choose $t_c$ as above such that the number $m$ of indices $j \in [n]$ such that $t_c$ satisfies $x<z_j$ is maximal. If $m = n$, then $t_c$ satisfies~\eqref{eq:goal} and hence satisfies the requirements for $t_c^*$. 
Otherwise, there exists $i \in [n]$ such that
$t_c$ satisfies $x = z_i$; this case will lead to a contradiction. 
%\item $t_c$ satisfies $x=y=z_1 = \cdots = x_n$. 
%\item there are $i,j,k \in [n]$ such that 
%$t_c$ satisfies $z_j > z_i = z_k = x$. 
%\end{itemize}
Choose  automorphisms $\alpha_1, \alpha_2 \in \Aut(\Q;<)$ such that $\alpha_1(t_c)$ satisfies $x=0$ 
and $\alpha_2(t_{x,i})$ satisfies $z_i = 0$. 
Then $$t_c' \ceq \mix(\alpha_1(t_c),\alpha_2(t_{x,i})) \in R$$ satisfies 
$z_i > x$ and $x=y$ (see Table~\ref{tab:mixJ1}). Moreover, if $k \in [n]$ is such that $\alpha_1(t_c)$ satisfies $x < z_k$ then $t_c'$ satisfies $x < z_k$ as well. Hence, the number $m$ of indices $j \in [n]$ such that $t_c'$ satisfies $x < z_j$ is at least  $m+1$, a contradiction to the choice of $t_c$.

\begin{table}
  \begin{center}
         $\begin{array}{r|rrrr}
         & \multicolumn{1}{c}{x} & \multicolumn{1}{c}{y} & \multicolumn{1}{c}{
         z_k} & \multicolumn{1}{c}{z_i} \\ \hline
        \alpha_1(t_c) & 0 & 0 & >0 & 0 \\
        \alpha_2(t_{x,i}) & 1 & 2 & \geq 1 & 0 \\ \hline
 t_{c}' \ceq \mix(\alpha_1(t_c),\alpha_2(t_{x,i})) & 1 & 1 & >2 & 2 
          \end{array}$
        \end{center}  
        \caption{Calculation of $t_{c}'$ in the proof of Proposition~\ref{thm:mix}.}
        \label{tab:mixJ1}
    \end{table}
         
         Our next goal is to prove the existence of $t_{x,i}^*, t_{y,i}^* \in R$ such that 
\begin{align*}
        t_{x,i}^*  \text{ satisfies }\quad & z_i < x < y \AND \bigwedge_{j\neq i} y<z_j \\
       \text{and }  \quad t_{y,i}^* \text{ satisfies }\quad & z_i < y < x \AND \bigwedge_{j\neq i} x<z_j.
\end{align*}
  Using $t_{c}^*$ and appropriately chosen $\alpha_1, \dots, \alpha_5 \in \Aut(\Q;<)$ we may first produce $h_{x,i},h_{y,i} \in R$ and combine them to get $t_{x,i}^*, t_{y,i}^* \in R$ as shown in Table~\ref{tab:mixJ2}.
  \begin{table}
   \begin{center}
         $\begin{array}{r|rrrr}
         & \multicolumn{1}{c}{x} & \multicolumn{1}{c}{y} & \multicolumn{1}{c}{z_i} & \multicolumn{1}{c}{z_{j\neq i}}\\ \hline
        t_{x,i}'\ceq \alpha_1(t_{x,i}) & 1 & 2 & 0 & \geq 1  \\
        {t_c^*}' \ceq \alpha_2(t_{c}^*) & 1 & 1 & >1 & >1 \\ \hline
 h_{x,i} \ceq \mix(t_{x,i}',{t_{c}^{*}}')& 5 & 3 & 1 & \geq 4 \\ \hline \hline
        t_{y,i}'\ceq \alpha_3(t_{y,i}) & 2 & 1 & 0 & \geq 1  \\
        {t_c^*}' & 1 & 1 & >1 & >1 \\ \hline
 h_{y,i} \ceq \mix(t_{y,i}',{t_{c}^{*}}')& 3 & 5 & 1 & \geq 4\\ \hline \hline
        h_{x,i}'\ceq \alpha_4(h_{x,i}) & 3 & 1 & 0 & \geq 2  \\
        h_{y,i}' \ceq \alpha_5(h_{y,i}) & 1 & 3 & 0 & \geq 2 \\ \hline
 t_{x,i}^* \ceq \mix(h'_{x,i},h'_{y,i})& 3 & 4 & 2 & \geq 6 \\ \hline \hline
        h_{y,i}' & 1 & 3 & 0 & \geq 2  \\
        h_{x,i}' & 3 & 1 & 0 & \geq 2 \\ \hline
 t_{y,i}^* \ceq \mix(h_{y,i}',h_{x,i}')& 4 & 3 & 2 & \geq 6 \\
          \end{array}$
\end{center}
       \caption{Calculation of $t_{x,i}^*$ and $t_{y,i}^*$ in the proof of Proposition~\ref{thm:mix}.}
        \label{tab:mixJ2}
\end{table}         

Without loss of generality we may assume that $x,y,z_1, \dots, z_n$ correspond to the first $n+2$ coordinates in $R$. Let $u_1, \dots, u_m$ be fresh variables such that the arity of $R$ is $2+n+m$
and define 
%Because $t_x^*, t_y^*, t_c^* \in R$, the following is a primitive positive definition of $\J$:
\begin{align*}
  \psi(x,y,\bar z,\bar u) & \ceq  R(x,y,\bar z, \bar u)  \wedge   \bigwedge_{i=2}^n x<z_i \AND y < z_i \\
 \text{ and } \quad  \psi'(x,y,z) & \ceq \exists z_1, \dots, z_k,u_1, \dots, u_m \big( \psi(x,y,\bar z,\bar u)  \AND z<z_1 \big). 
\end{align*}
To show that $\psi'$ defines $\J$, first notice that 
$t_{x,1}^*$, $t_{y,1}^*$, and  $t_c^*$ satisfy $\psi$ and that $\psi$ implies
$x \geq y \vee x > z_1$ and $y \geq x \vee y > z_1$ 
because all disjuncts of $C_x$ and $C_y$ involving $z_2, \dotsc, z_n$ do not hold. 
This in turn implies that 
the set of orbits of $(x,y,z_1)$ in tuples that satisfy $\psi$ is contained in $\J$. 
It follows that if $(a, b, c)$ satisfies $\psi'$, 
then either $a=b$, or there exists $z_1$ such that 
$c < z_1 < \min(a,b)$, so $(a,b,c) \in \J$. 

Conversely, let $(a,b,c)$ be in $\J$. If $a=b$ and we may choose $\alpha\in \Aut(\fA)$ such that $\alpha(t_c^*[x]) = a$ and $\alpha(t_c^*[z_1]) > c$, in which case $\alpha(t_c^*)$ yields values for $z_1, \dots, u_m$ which prove that $(a,b,c)$ satisfies $\psi'$. If $c < a < b$ then there exists $\alpha\in \Aut(\fA)$ such that $\alpha(t_{x,1}^*)[x] = a$, $\alpha(t_{x,1}^*)[y] = b$ and $\alpha(t_{x,1}^*)[z_1] > c$. Hence, $\alpha(t_{x,1}^*)$ shows that $(a,b,c)$ satisfies $\psi'$. The argument for $c<b<a$ works with $t_{y,1}^*$ in an analogous way.
\end{proof}

Now we are ready to prove the main result of this subsection.

\begin{prop} \label{thm:mi}
Let $\fA$ be a first-order expansion of $({\mathbb Q};<)$ which is preserved by $\mi$, but not by $\ll$. Then $\J$ has a primitive positive definition in $\fA$. 
\end{prop}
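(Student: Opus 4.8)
The plan is to derive this from the two already-established halves, Proposition~\ref{thm:mi+leq} and Proposition~\ref{thm:mix}, via a case distinction on whether $\leq$ is primitively positively definable in $\fA$. First I would record the preliminary observation that, since $\fA$ is preserved by $\mi$, it is also preserved by $\pp$; this is exactly the inclusion $\overline{\langle \pp, \Aut(\Q) \rangle} \subseteq \overline{\langle \mi, \Aut(\Q) \rangle}$ noted in Section~\ref{sec:TCLs}. This is what will let us feed $\fA$ into Proposition~\ref{thm:produce-mix}.

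In the first case, suppose $\leq$ has a primitive positive definition in $\fA$. Then $(\fA;\leq)$ is a first-order expansion of $(\Q;\leq)$ (note $(\Q;<)$ and $(\Q;\leq)$ are first-order interdefinable, so this is harmless), it is preserved by $\mi$, and it is not preserved by $\ll$ since already $\fA$ is not. Proposition~\ref{thm:mi+leq} then yields a primitive positive definition of $\J$ in $(\fA;\leq)$. Because $\Pol(\fA;\leq)=\Pol(\fA)$ (adding a primitively positively definable relation does not change the polymorphism clone), the relation $\J$ is preserved by all polymorphisms of $\fA$, and Theorem~\ref{thm:InvPol} gives a primitive positive definition of $\J$ in $\fA$ itself. (Equivalently, one substitutes the pp-definition of $\leq$ into the pp-definition of $\J$ obtained from Proposition~\ref{thm:mi+leq}.)

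In the second case, suppose $\leq$ does not have a primitive positive definition in $\fA$. Since $\fA$ is a temporal structure preserved by $\pp$, Proposition~\ref{thm:produce-mix} applies and shows that $\fA$ is preserved by $\mix$. As $\fA$ is not preserved by $\ll$, Proposition~\ref{thm:mix} now directly gives a primitive positive definition of $\J$ in $\fA$, completing the proof.

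I do not expect a genuine obstacle here: all the substantive work has been done in the preceding propositions, and this statement is purely an assembly step. The only point requiring a little care is the first case, where one must make sure that applying Proposition~\ref{thm:mi+leq} to $(\fA;\leq)$ is legitimate and that the resulting pp-definition can be pulled back to $\fA$; the transfer via $\Pol(\fA;\leq)=\Pol(\fA)$ and Theorem~\ref{thm:InvPol} handles this cleanly.
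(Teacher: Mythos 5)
Your proposal is correct and follows essentially the same route as the paper: a case distinction on whether $\leq$ is primitively positively definable in $\fA$, invoking Proposition~\ref{thm:mi+leq} in the first case and Proposition~\ref{thm:produce-mix} followed by Proposition~\ref{thm:mix} in the second. The extra details you supply (that $\mi$ implies preservation by $\pp$, and the transfer of the pp-definition from $(\fA;\leq)$ back to $\fA$) are correct fillings of steps the paper leaves implicit.
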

\begin{proof}
If $\leq$ is primitively positively definable in $\fA$, then  Proposition~\ref{thm:mi+leq} yields that $\J$ is primitively positively definable in $\fA$.  
If $\leq$ is not primitively positively definable in $\fA$ then Proposition~\ref{thm:produce-mix} yields that $\fA$ is preserved by $\mix$. In this case Proposition~\ref{thm:mix} implies that $\J$ is primitively positively definable. 
\end{proof}

\subsection{Temporal Structures Preserved  by \texorpdfstring{$\mx$}{mx}} 
\label{sec:mx}
In this section we consider first-order expansions
of $({\mathbb Q};<)$ that are preserved by $\mx$. 
We distinguish the cases whether $X$ is primitively positively definable in $\fA$ or not. Theorem~\ref{thm:mixedmx} implies that if $X$ is not primitively positively definable in $\fA$,
then $\fA$ is also preserved by $\min$.
So we first consider the situation that $\fA$ is preserved by both $\mx$ and $\min$. 
For $R \subseteq \Q^n$, $t=(t_1,\dots, t_n)\in R$ and $I= \{i_1,\dots,i_l\} \subseteq [n]$ we write $\pi_I(t)$ for the tuple $(t_{i_1},\dots,t_{i_l})$ where $i_1 < i_2 < \dots <i_l$ and $\pi_I(R)$ for the relation $\set{\pi_I(t) \mid t\in R}$.

\begin{prop} \label{thm:mxmi}
Let $\fA$ be a first-order expansion 
of $({\mathbb Q};<)$ that is preserved by $\mx$
and $\min$. 
%but $X$ is not primitively positively definable in $\fA$.
Then $\fA$ is preserved by $\mi$. 
\end{prop}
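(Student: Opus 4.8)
The plan is to combine the two syntactic normal forms available to us. Let $R$ be any relation of $\fA$. Since $\fA$ is preserved by $\mx$, Theorem~\ref{thm:SNF_base}~\ref{item:SNF_Mx} tells us that $R$ is defined by a conjunction of $\{<\}$-formulas $\phi$ whose satisfaction by a tuple $t$ depends only on the min-tuple $\chi(t)$ via a homogeneous system of linear equations over $\GF_2$ with solution space $\chi_0(R)$. Since $\fA$ is also preserved by $\min$, Theorem~\ref{thm:SNF_base}~\ref{item:SNF_Min} tells us that $R$ has a definition by a reduced conjunction of clauses of the form $x_1 \circ_2 x_2 \vee \dots \vee x_1 \circ_n x_n$ with each $\circ_i \in \{>, \geq\}$. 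The goal, by Theorem~\ref{thm:InvPol}, is to show that every such $R$ is preserved by $\mi$, and for this I would again appeal to Theorem~\ref{thm:SNF_base}~\ref{item:SNF_Mi}: it suffices to show that the $\min$-normal form of $R$ can be rewritten so that in each clause at most one of the relations $\circ_i$ is $\geq$ (the rest being $>$ or $\neq$, but here $\neq$ will not even be needed).

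So take the reduced $\min$-definition of $R$ and suppose for contradiction that some clause $C \ceq (x_1 \circ_2 x_2 \vee \dots \vee x_1 \circ_n x_n)$ contains at least two disjuncts that are $\geq$, say $x_1 \geq x_j$ and $x_1 \geq x_k$ with $j \neq k$. Because the definition is reduced, there is a tuple $t \in R$ satisfying $x_1 \geq x_j$ (i.e.\ $t[1] \geq t[j]$) and violating every other disjunct of $C$; in particular $t$ satisfies $x_1 < x_k$ strictly, and $t[1] > t[i]$ is false for every $i$ with $\circ_i = {>}$, and $t[1] < x_i$ (strict) for those, hence $t[1] = t[j]$ actually (since $t[1]\ge t[j]$ but $x_1 > x_j$ is not a disjunct unless $\circ_j={\geq}$, which it is — so we only get $t[1]\ge t[j]$; refine: among the disjuncts $x_1 > x_i$, none holds, so $t[1]\le t[i]$ for all those $i$). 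The key point I want to extract is that the coordinates $1$ and $j$ are tied and minimal-ish in $t$ relative to the structure of $C$, while coordinate $k$ is strictly above coordinate $1$. Symmetrically, using the disjunct $x_1 \geq x_k$, there is a tuple $s \in R$ with $s[1] = s[k]$ and $s[1] < s[j]$. Now I apply $\mx$: since $\mx$ agrees (on tuples after suitable automorphisms) with the min-indicator pattern $x=y<z \vee x=z<y \vee y=z<x$ placed on solutions of the $\GF_2$-system, one chooses $\alpha,\beta \in \Aut(\Q;<)$ so that in $\mx(\alpha(t),\beta(s))$ the coordinates $1$ and $j$ become equal, the coordinate $k$ becomes equal to them as well (forced by the two original ties), and everything is strictly above — producing a min-tuple that forces a violation of $C$, hence a tuple outside $R$. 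The contradiction shows every clause of the reduced $\min$-definition has at most one $\geq$.

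The main obstacle I expect is bookkeeping the interaction between the two normal forms: the $\mx$-form speaks about min-tuples and $\GF_2$-linear constraints, whereas the $\min$-form speaks about clauses with $>,\geq$, and one must verify that forcing two $\geq$-disjuncts in one $\min$-clause genuinely produces, via $\mx$ applied to two witness tuples, a min-tuple that still lies in $\chi_0(R)$ yet violates the clause $C$ — i.e.\ that the $\GF_2$-solution space $\chi_0(R)$ is closed under the relevant combination while $C$ is not. Concretely one has to track which coordinates of the two witnesses are minimal, choose the automorphisms so that $\mx$ lands the shared "$=$"-block exactly on $\{1,j,k\}$, and check that $\chi_0$ of the result is a genuine $\GF_2$-combination (bitwise of the two min-tuples) lying in $\chi_0(R)$. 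Once the two normal forms are aligned, the linear-algebra part is routine: the min-tuples of $t$ and $s$ both satisfy the $\GF_2$-system, their "$\mx$-combination" min-tuple does too, but its support pattern on $\{1,j,k\}$ contradicts the clause $C$. Assembling these observations for every relation of $\fA$ and invoking Theorem~\ref{thm:InvPol} with Theorem~\ref{thm:SNF_base}~\ref{item:SNF_Mi} completes the proof that $\mi \in \Pol(\fA)$.
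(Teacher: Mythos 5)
There is a genuine gap at the heart of your argument. You take two witnesses $t,s\in R$ (which do exist, by reducedness) and then try to apply $\mx$, composed with automorphisms, to obtain ``a tuple outside $R$'' that violates the clause $C$. This is impossible in principle: $\mx$ and all automorphisms are polymorphisms of $\fA$, so any tuple you build from $t$ and $s$ this way lies in $R$ and therefore satisfies every clause of the definition, including $C$. A contradiction can never arise from this construction. The concrete details confirm this: the tuple you describe, with coordinates $1,j,k$ all equal and everything else strictly above, would \emph{satisfy} $C$ (it satisfies the disjunct $x_1\geq x_j$), not violate it; and in any case $\mx$ cannot produce it from your witnesses, since coordinate $j$ carries \emph{different} values in $\alpha(t)$ and $\beta(s)$ (so it lands in the $\alpha_{\mx}$-range) while coordinate $1$ carries \emph{equal} values (so it lands in the strictly larger $\beta_{\mx}$-range); one checks that every alignment of the two witnesses yields a tuple satisfying $x_1\geq x_j$ or $x_1\geq x_k$. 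A further unaddressed issue is that $C$ need not mention all coordinates of $R$, so the min-tuples $\chi(\cdot)$ that govern the behaviour of $\mx$ are not controlled by the inequalities you extract from $C$ alone.

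The paper argues in the opposite direction: rather than trying to exhibit a forbidden tuple, it shows that closure under $\min$ and $\mx$ forces the \emph{presence} of the tuples needed to realize $\mi(t,t')$. Concretely, it proceeds by induction on the arity, uses the fact that $\chi_0(\pi_I(R))$ is an affine subspace over $\GF_2$ closed under Boolean $\min$ (via $\min(x,y)=\max(x,y)+x+y \bmod 2$) to find a tuple $c\in R$ whose min-set equals that of $\mi(t,t')$, applies the induction hypothesis to a projection to find a tuple $r\in R$ with the correct ordering on the non-minimal coordinates, and then combines $c$ and $r$ with $\mx$ to land in the orbit of $\mi(t,t')$. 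If you want to keep a syntactic route, you would need to show directly that the two $\geq$-witnesses, together with closure under $\min$ and $\mx$, force enough additional tuples into $R$ that one of the $\geq$-disjuncts becomes removable or strengthenable --- but that is a different argument from the one you propose, and the semantic induction is likely the cleaner path.
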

\begin{proof} Let $R$ be a relation in $\fA$. The proof proceeds by induction on the arity $n$ of $R$. For $n=1$, or if $R$ is empty, there is nothing to be shown. Suppose that the statement holds for all relations of arity less than $n$ and that $R$ is not empty. For every $I\subseteq [n]$ we fix a homogeneous system $A^{R}_{I}x=0$ of Boolean linear equations with solution space $\chi_0(\pi_{I}(R))$, which exists due to case~\ref{item:SNF_Mx} in Theorem~\ref{thm:SNF_base}. %t
As $R$ is preserved by $\min$, the Boolean maximum operation preserves $\chi_0(\pi_{I}(R))$.
Furthermore, the solution space of a system of homogeneous linear equations over $\GF_2$ is also preserved by the operation $(x,y,z) \mapsto x+y+z \mod 2$ (because it is a subspace of $\GF_2^3$), we get that $\chi_0(\pi_{I}(R))$ is also preserved by $\min$ because $\min(x,y) = \max(x,y) + x + y \mod 2$. For every pair $t,t'\in R$ we want to show that 
$\mi(t,t')\in R$. If $\min(t)=\min(t')$, we consider the set $S \ceq \set{i\in[n] \mid \chi(t)[i] = \chi(t')[i] = 1}$ and distinguish two cases:
 	 \begin{enumerate}
 	 	\item If $S \neq \emptyset$ then $\chi(\mi(t,t')) = \min(\chi(t),\chi(t'))\in \chi(R)$.
 	 	% $M(t)\cap M(t') \neq \emptyset$, and
 	 	\item If $S = \emptyset$, then $\chi(\mi(t,t'))= \chi(t')\in \chi(R)$.
 	 \end{enumerate}
   \noindent If $\min(t)\neq \min(t')$, then $\chi(\mi(t,t'))\in \{\chi(t),\chi(t')\}\subseteq \chi(R)$. 
   
  Thus, there exists a tuple $c\in R$ with $\chi(c)=\chi(\mi(t,t'))$.  Let $I\ceq \set{i \mid \chi(c)[i] = 1}$ and observe that $I$ is non-empty.  
   By induction hypothesis, the statement holds for $ \pi_{[n]\setminus I}(R) $ and we have $\pi_{[n]\setminus I}(\mi(t,t'))=\mi(\pi_{[n]\setminus I}(t),\pi_{[n]\setminus I}(t'))  \in \pi_{[n]\setminus I}(R)$. Therefore, there exists $r\in R$ with $\pi_{[n]\setminus I}(\mi(t,t'))=\pi_{[n]\setminus I}(r)$. 
   We can apply an automorphism  of $(\Q;<)$  to $r$ to obtain a tuple $r'\in R$ where all entries are positive. 
   We can also apply an automorphism to $c$
   to obtain a tuple $c'\in R$ so that its minimal entries are $0$ and for every other entry $i\in [n]\setminus I$ it holds that $c'[i]>r'[i]$. Then $\mx(c',r')$ yields a tuple in $R$ which is minimal at the coordinates in $I$ and all other coordinates are ordered like the coordinates in $r$, i.e., $\mx(c',r')$ is equal to $\mi(t,t')$ under an automorphism. Hence, $\mi(t,t')\in R$, i.e., $R$ is preserved by $\mi$.
 \end{proof}  

\begin{prop}\label{thm:mx}
Let $\fA$ be a first-order expansion 
of $({\mathbb Q};<)$ that is preserved 
by $\mx$ but not by $\mi$. 
Then $\J$ is primitively positively definable in $\fA$.
\end{prop}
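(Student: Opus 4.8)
The plan is to split on whether the relation $X$ has a primitive positive definition in $\fA$. Since $\fA$ is preserved by $\mx$, Theorem~\ref{thm:mixedmx} tells us that either $X$ is primitively positively definable in $\fA$, or $\fA$ is preserved by a constant operation, or $\fA$ is preserved by $\min$. The last alternative is impossible: together with $\mx$ it would force $\fA$ to be preserved by $\mi$ by Proposition~\ref{thm:mxmi}, contradicting the hypothesis. So I would treat the two remaining cases: $X$ is pp-definable in $\fA$, and $\fA$ is preserved by a constant operation.

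If $X$ is pp-definable in $\fA$, the statement follows from the clone machinery alone. The relation $\J$ is preserved by $\Aut(\Q;<)$ (being temporal) and by $\mx$ (as noted after Theorem~\ref{thm:SNF_base}), and $\Pol(\J)$ is locally closed, hence $\overline{\langle\{\mx\}\cup\Aut(\Q;<)\rangle}\subseteq\Pol(\J)$. By Theorem~\ref{thm:mixedmx} the left-hand side equals $\Pol(\Q;X)$, so $\Pol(\Q;X)\subseteq\Pol(\J)$, and Theorem~\ref{thm:InvPol} yields a pp-definition of $\J$ in $(\Q;X)$. Composing this with a pp-definition of $X$ in $\fA$ gives one of $\J$ in $\fA$.

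Now suppose $\fA$ is preserved by a constant operation. Recall that by case~\ref{item:SNF_Mx} of Theorem~\ref{thm:SNF_base} every temporal relation $R$ preserved by $\mx$ equals the set of all tuples whose min-tuple lies in the linear space $\chi_0(R)\le\GF_2^{\,n}$, where $n=\ar(R)$. Since $\fA$ is not preserved by $\mi$, some relation $R$ of $\fA$ is not preserved by $\mi$; write $V:=\chi_0(R)$, a $\GF_2$-subspace, and note that preservation by a constant forces $\mathbf 1\in V$. A direct inspection of $\chi(\mi(t_1,t_2))$ for $t_1,t_2\in R$ gives the trichotomy: if $\min(t_1)\neq\min(t_2)$ then $\chi(\mi(t_1,t_2))\in\{\chi(t_1),\chi(t_2)\}$; if $\min(t_1)=\min(t_2)$ but $t_1$ and $t_2$ attain their minima on disjoint sets of coordinates, then again $\chi(\mi(t_1,t_2))\in\{\chi(t_1),\chi(t_2)\}$; and if $\min(t_1)=\min(t_2)$ and those sets overlap, then $\chi(\mi(t_1,t_2))=\min(\chi(t_1),\chi(t_2))$, the coordinatewise minimum. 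Hence $R$ fails to be preserved by $\mi$ only if there are $a,b\in V$ with $\min(a,b)\neq\mathbf 0$ and $\min(a,b)\notin V$; fix such $a,b$. Writing $P$, $D$, $E$ for the sets of coordinates at which, respectively, both $a$ and $b$ equal $1$, only $a$ equals $1$, and $a$ equals $0$, we have $P\neq\emptyset$, $D\neq\emptyset$ (otherwise $\min(a,b)=a\in V$), and $E\neq\emptyset$ (otherwise $a=\mathbf 1$ and $\min(a,b)=b\in V$).

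Finally I would carve out $\J$ by a variable identification. Let $R'(x,y,z)$ be the pp-definable relation obtained from $R$ by substituting $x$ for all coordinates in $P$, $y$ for all coordinates in $D$, and $z$ for all coordinates in $E$; as $\fA$ is preserved by $\mx$, so is $R'$. Because the coordinates within each block are equal in every tuple of $R'$, the min-tuple of such a tuple is the block-wise expansion of the min-tuple of $(x,y,z)$, so $\chi_0(R')$ consists of exactly those $(u_1,u_2,u_3)\in\GF_2^{\,3}$ whose block-wise expansion lies in $V$. Evaluating this on the eight vectors of $\GF_2^{\,3}$ — using $a\in V$, $b\in V$, $\min(a,b)\notin V$, $\mathbf 1\in V$, and $\mathbf 1+a\in V$ (addition over $\GF_2$) — one checks that $\chi_0(R')=\{(u_1,u_2,u_3)\mid u_1=u_2\}=\chi_0(\J)$. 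Since $R'$ and $\J$ are both preserved by $\mx$, each equals the set of tuples whose min-tuple lies in its $\chi_0$, so $R'=\J$, and $\J$ is primitively positively definable in $\fA$. The main obstacle, which I expect to be the bulk of the work, is the bookkeeping in this last case: establishing the trichotomy for $\chi(\mi(t_1,t_2))$ cleanly, and above all verifying that the eight membership computations for $\chi_0(R')$ yield precisely $\{u_1=u_2\}$ and nothing larger or smaller; the remaining steps are routine applications of the quoted results.
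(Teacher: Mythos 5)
Your case split is the same as the paper's, and your first case is handled correctly: since $\Pol(\Q;\J)$ is a locally closed clone containing $\mx$ and $\Aut(\Q;<)$, Theorem~\ref{thm:mixedmx} gives $\Pol(\Q;X)\subseteq\Pol(\Q;\J)$, and Theorem~\ref{thm:InvPol} then yields a pp-definition of $\J$ in $(\Q;X)$ and hence in $\fA$. (The paper instead writes down the explicit formula $\exists h\,(X(z,z,h)\wedge X(x,y,h))$, but your clone-theoretic argument is a valid substitute.) The problem is your second case. First, it is vacuous: $\fA$ is a first-order expansion of $(\Q;<)$, so $<$ is among its relations, and no constant operation preserves $<$; this one-line observation is all that is needed, and it is what the paper silently uses when it passes directly from ``$X$ not pp-definable'' to ``$\fA$ is preserved by $\min$''.

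Second, the substantive argument you give for that case does not stand up, so as written your proof contains an unsound component. It rests on the claim that a temporal relation $R$ preserved by $\mx$ \emph{equals} the set of tuples whose min-tuple lies in the subspace $\chi_0(R)$. Theorem~\ref{thm:SNF_base}~(\ref{item:SNF_Mx}) does not say this: it says $R$ is a \emph{conjunction} of such min-tuple-determined conditions, in general on proper subsets of the coordinates (this is why Proposition~\ref{thm:mxmi} has to work with $\chi_0(\pi_I(R))$ for all $I\subseteq[n]$ rather than with $\chi_0(R)$ alone). For a concrete counterexample, $R'=\{(a,b,c)\mid a=b\}$ is preserved by $\mx$ and satisfies $\chi_0(R')=\{(0,0,0),(0,0,1),(1,1,0),(1,1,1)\}=\chi_0(\J)$, yet $R'\subsetneq\J$; so your final step ``$\chi_0(R')=\chi_0(\J)$ implies $R'=\J$'' fails. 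Likewise, the inference ``$R$ not preserved by $\mi$ implies there are $a,b\in\chi_0(R)$ with $\min(a,b)\neq\mathbf{0}$ and $\min(a,b)\notin\chi_0(R)$'' is false: the relation defined by $X(x_1,x_2,x_3)\wedge x_4<x_1\wedge x_4<x_2\wedge x_4<x_3$ is preserved by $\mx$ and not by $\mi$, but its $\chi_0$ is $\{(0,0,0,0),(0,0,0,1)\}$, which is closed under coordinatewise minimum. So you should delete the entire second case and replace it by the observation that constant operations do not preserve $<$; with that repair the proof is correct.
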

\begin{proof}
First suppose that $X$ is primitively positively definable in $\fA$. 
It is easy to check that $\exists h \big(X(z,z,h) \AND X(x,y,h) \big)$  primitively positively defines $\J(x,y,z)$, and hence $\J$ is primitively positively definable in $\fA$. 
Otherwise, if $X$ is not primitively positively definable in $\fA$, then Theorem~\ref{thm:mixedmx} implies that $\fA$ is also preserved by $\min$,
and hence by $\mi$ by
Proposition~\ref{thm:mxmi}, which contradicts our assumptions. 
\end{proof}

 \subsection{Temporal Structures Preserved  by  \texorpdfstring{$\min$}{min}}\label{sec:min}
This section treats first-order expansions of $({\mathbb Q};<)$ that are preserved by $\min$ but not by 
  $\mi$ and $\mx$. We first show that we may assume that $\leq$ has a primitive positive definition in $\fA$. 
 
 \begin{lem}\label{thm:NoLeqMxOrmi} 
Let $\fA$ be a first-order expansion of $({\mathbb Q};<)$ which is preserved by $\pp$ and 
does not admit a primitive positive definition of $\leq$. Then $\fA$ is preserved by $\mi$ or by $\mx$. 
 \end{lem}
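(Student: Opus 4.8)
The plan is to invoke Theorem~\ref{thm:foundational2}: since $\fA$ is preserved by $\pp$, either $T_3$ has a primitive positive definition in $\fA$, or $\fA$ is preserved by $\mi$, by $\mx$, or by $\min$. In the cases where $\fA$ is preserved by $\mi$ or by $\mx$ the assertion of the lemma holds, so it suffices to show that each of the two remaining cases forces $\leq$ to be primitively positively definable in $\fA$, contradicting the hypothesis, and hence cannot occur. The case that $T_3$ is primitively positively definable is immediate: the primitive positive formula $\exists z\, T_3(x,y,z)$ defines $\leq$, because the disjunct $x=y<z$ can be satisfied by a suitable $z$ exactly when $x=y$, the disjunct $x=z<y$ exactly when $x<y$, and the matrix of the formula implies $x\leq y$; substituting a primitive positive definition of $T_3$ in $\fA$ then yields one of $\leq$, a contradiction. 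So the real work is the case that $\fA$ is preserved by $\min$ but not by $\mi$ (in which case $\fA$ is also not preserved by $\mx$, by Proposition~\ref{thm:mxmi}, although we will not need this).

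For that case, I would fix a relation $R$ of $\fA$ of arity $n$ that is not preserved by $\mi$. By Theorem~\ref{thm:SNF_base}(\ref{item:SNF_Min}) and the remark following it, $R$ is defined by a reduced conjunction $\phi$ of clauses of the form $x_1\circ_2 x_2\OR\dots\OR x_1\circ_k x_k$ with all $\circ_i\in\{>,\geq\}$; among all such defining conjunctions I would pick one in which $\geq$ occurs as few times as possible, so that no occurrence of $\geq$ can be replaced by $>$ without changing the defined relation. A clause in which $\geq$ occurs at most once is already of the form admitted in Theorem~\ref{thm:SNF_base}(\ref{item:SNF_Mi}), so if every clause of $\phi$ were of this kind then $R$ would be preserved by $\mi$, a contradiction; hence some clause $C$ of $\phi$ contains $\geq$ in at least two positions, and after renaming we may write $C=(x_1\geq x_2)\OR(x_1\geq x_3)\OR D$ with $D=\bigvee_{i\in I}x_1\circ_i x_i$ for some $I\subseteq\{4,\dots,n\}$. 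Reducedness of $\phi$ together with the minimality of the number of occurrences of $\geq$ supplies witnessing tuples in $R$: in particular tuples $t_x,t_y\in R$ with $t_x$ satisfying $x_1=x_2<x_3$ and $t_y$ satisfying $x_1=x_3<x_2$, both falsifying every disjunct of $D$ and satisfying the remaining clauses of $\phi$, and (applying automorphisms of $(\Q;<)$ and $\min$ to $t_x$ and $t_y$) a tuple $t_c\in R$ satisfying $x_1=x_2=x_3$ and still falsifying $D$, along with one further tuple per disjunct of $D$ witnessing that this disjunct cannot be deleted from $C$.

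Combining these tuples by means of automorphisms of $(\Q;<)$ and the polymorphism $\min$, in the style of the table-driven computations of Section~\ref{sec:mix}, one should produce enough tuples in $R$ to show that a primitive positive formula $\psi(x,y,z)$, obtained from the atom $R(x,y,z,x_4,\dots,x_n)$ by existentially quantifying $x_4,\dots,x_n$ and conjoining suitable $<$-atoms that falsify the disjunct $D$, defines the relation $R^{\min}_{\leq}=\{(a,b,c)\mid a\geq b\OR a\geq c\}$. Then $x\geq y$ has the primitive positive definition $\psi(x,y,y)$, so $\leq$ is primitively positively definable in $\fA$, again contradicting the hypothesis; this rules out the last case and the lemma follows.

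The step I expect to be the main obstacle is the last one: choosing the $<$-atoms in $\psi$ so that they falsify $D$ — whose disjuncts may mix strict and non-strict inequalities — while remaining compatible with the other clauses of $\phi$, and carrying out the accompanying automorphism-and-$\min$ bookkeeping to verify that $\psi$ defines exactly $R^{\min}_{\leq}$ and not a larger relation.
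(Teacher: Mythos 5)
Your route is genuinely different from the paper's: the paper proves this lemma in three lines by citing results from~\cite{tcsps-journal} (a binary polymorphism violating $\leq$ exists by Theorem~\ref{thm:InvPol} and Lemma~10 there; together with preservation by $\pp$, Lemma~35 there yields min-intersection or min-xor closure, and Propositions~27 and~29 there then give $\mi$ or $\mx$). Your reduction via Theorem~\ref{thm:foundational2} is a reasonable alternative skeleton, and your treatment of the $T_3$ case is correct: $\exists z\, T_3(x,y,z)$ does define $\leq$. The gap is in the remaining case ($\fA$ preserved by $\min$ but not by $\mi$), where you must produce a primitive positive definition of $\leq$ from the relations of $\fA$ and $<$ \emph{alone}. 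Note that this is essentially the content of Proposition~\ref{thm:min}, but that proposition invokes the present lemma to obtain $\leq$ first and then freely conjoins $\leq$-atoms such as $x\leq x_i$ in its defining formula; you must do without them, and that is exactly where your sketch breaks down.

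Concretely, your proposed formula shape can fail. Let $R\subseteq\Q^4$ be defined by
\[
(x_1\geq x_2 \OR x_1\geq x_3 \OR x_1>x_4)\AND(x_1\geq x_4).
\]
Both clauses have the form of Theorem~\ref{thm:SNF_base}~(\ref{item:SNF_Min}), so $R$ is preserved by $\min$ (hence by $\pp$); it is not preserved by $\mi$, since $(0,0,1,0),(0,1,0,0)\in R$ while $\mi$ applied to this pair lies in the orbit of $(0,2,1,0)$, which violates the first clause. The displayed definition is reduced, and neither $\geq$ in the first clause can be replaced by $>$. Taking $C$ to be the first clause and $D=(x_1>x_4)$, your formula is $\psi(x,y,z)\ceq\exists x_4\big(R(x,y,z,x_4)\AND x<x_4\big)$ --- but $R$ entails $x_1\geq x_4$, so $\psi$ defines the empty relation rather than $R^{\min}_{\leq}$. (Here $\leq$ \emph{is} primitively positively definable: $R(x,y,z,x)$ defines $R^{\min}_{\leq}$ --- but via identification of variables, not via a conjoined $<$-atom.) The underlying problem is that falsifying a disjunct $x_1>x_i$ of $D$ requires imposing $x_1\leq x_i$, which is precisely the relation you do not yet have; in a primitive positive formula you must commit to either $x_1<x_i$ or $x_1=x_i$, this choice interacts with the other clauses of $\phi$, and your sketch provides neither a rule for making it nor an argument that some choice always succeeds. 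Until that step is supplied, the $\min$-case, and hence the lemma, is not proved; given how much machinery it would take to repair, the paper's citation-based argument is the more economical route.
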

 \begin{proof} By Theorem~\ref{thm:InvPol} there exists an $f\in \Pol(\fA)$ that does not preserve $\leq$. %Then $f$ must be at least binary.
 As $\leq$ is a union of two orbits of $\Aut(\mathbb{Q};<) = \Aut(\fA)$, 
  there is a binary polymorphism $f'$ of $\fA$ that does not preserve $\leq$ by Lemma~10 in~\cite{tcsps-journal}. As $\fA$ is also preserved by $\pp$, Lemma~35 in \cite{tcsps-journal} implies that $\fA$ is preserved by an operation providing \emph{min-intersection closure} or \emph{min-xor closure}. Then $\fA$ is preserved by $\mi$ or by $\mx$ by Proposition~27  and  Proposition~29 in \cite{tcsps-journal}, respectively. 
 \end{proof} 
 
  \begin{prop} \label{thm:min} 
  Let $\fA$ be a first-order expansion of $({\mathbb Q};<)$ preserved by $\min$ but not by 
  $\mi$ and not by $\mx$. 
  Then $R^{\min}_{\leq}$, $R^{\mi}$, and $\J$ have a 
  primitive positive definition in $\fA$. 
 \end{prop}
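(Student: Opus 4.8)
The plan is to first reduce, via Lemma~\ref{thm:NoLeqMxOrmi}, to the case where $\leq$ is primitively positively definable in $\fA$, and then to produce a primitive positive definition of $R^{\min}_{\leq}$; the remaining two relations follow from the equivalences
$$R^{\mi}(x,y,z) \; \equiv \; \exists z' \, \big( R^{\min}_{\leq}(x,y,z') \wedge z < z' \big) \qquad \text{and} \qquad \J(x,y,z) \; \equiv \; R^{\mi}(x,y,z) \wedge R^{\mi}(y,x,z),$$
both of which one checks directly (the second being the description of $\J$ as $R^{\mi}(a,b,c) \wedge R^{\mi}(b,a,c)$ from Section~\ref{sec:mix}). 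The reduction is harmless: $\fA$ is preserved by $\pp$, since it is preserved by $\min$ and hence by every operation in $\overline{\langle \min, \Aut(\Q;<) \rangle}$, which contains $\pp$; so if $\leq$ had no primitive positive definition in $\fA$, then by Lemma~\ref{thm:NoLeqMxOrmi} the structure $\fA$ would be preserved by $\mi$ or by $\mx$, contrary to assumption. Thus we may use $\leq$ freely in primitive positive formulas.

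Since $\fA$ is preserved by $\min$ but not by $\mi$, some relation $R$ of $\fA$ is not preserved by $\mi$. By Theorem~\ref{thm:SNF_base}~\ref{item:SNF_Min} the relation $R$ has a reduced definition by a conjunction $\phi$ of min-form clauses $x_1 \circ_2 x_2 \vee \dots \vee x_1 \circ_n x_n$ with $\circ_i \in \{>, \geq\}$; were every clause of $\phi$ to contain at most one literal with $\circ_i$ equal to $\geq$, then $\phi$ would also be of the form in Theorem~\ref{thm:SNF_base}~\ref{item:SNF_Mi} and $R$ would be preserved by $\mi$, a contradiction. Hence $\phi$ contains a clause $C$ with at least two $\geq$-literals, and after renaming coordinates we may assume $C = \big( x_1 \geq x_2 \vee x_1 \geq x_3 \vee \bigvee_{k=4}^n x_1 \circ_k x_k \big)$. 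Put $G \ceq \{k \geq 4 \mid \circ_k = \geq\}$ and $L \ceq \{k \geq 4 \mid \circ_k = {>}\}$ and consider
$$\psi(x,y,z) \; \ceq \; \exists x_2, x_4, \dots, x_n \, \Big( R(x,x_2,z,x_4,\dots,x_n) \wedge x_2 \geq y \wedge \bigwedge_{k \in G} x < x_k \wedge \bigwedge_{k \in L} x \leq x_k \Big).$$
If $\psi(x,y,z)$ holds, then $C$ is satisfied while each of its literals $x_1 \circ_k x_k$ with $k \geq 4$ is falsified by the added constraints; hence $x \geq x_2$ or $x \geq z$, and combining the first alternative with $x_2 \geq y$ yields $x \geq y$ or $x \geq z$, i.e.\ $(x,y,z) \in R^{\min}_{\leq}$. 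This is the easy inclusion.

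The main work is the converse: given $(x,y,z)$ with $x \geq y$ or $x \geq z$, one must exhibit values for $x_2,x_4,\dots,x_n$ making every conjunct of $\psi$ true. Here the reducedness of $\phi$ supplies, for each $\geq$-literal of $C$, a tuple of $R$ satisfying that literal and no other literal of $C$; after applying automorphisms of $(\Q;<)$ these tuples already furnish suitable values at the coordinates $4,\dots,n$ (their $4$-th through $n$-th entries automatically obey the required strict, resp.\ weak, inequalities with the pivot entry), so the only difficulty is to realise the correct order type on the first three coordinates — in the case $x \geq y$ one sets $x_2 \ceq x$, so that $x_1 \geq x_2$ holds trivially, and must realise $z$ in an arbitrary position relative to $x$, and the case $x \geq z$ is symmetric. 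Since each reducedness witness comes with one fixed order type, the remaining order types have to be produced by combining these witnesses using the polymorphism $\min$ together with suitable automorphisms and the relation $\leq$, in the style of the tableau computations in the proofs of Propositions~\ref{thm:mi+leq} and~\ref{thm:mix}; one must also keep the remaining clauses of $\phi$ satisfied, which holds because the combined tuples again lie in $R$. Carrying out this case analysis is the main obstacle; once it is complete, $\psi$ defines $R^{\min}_{\leq}$, and the two equivalences above give primitive positive definitions of $R^{\mi}$ and $\J$ as well.
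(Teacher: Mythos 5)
Your overall architecture matches the paper's: use Lemma~\ref{thm:NoLeqMxOrmi} to obtain a primitive positive definition of $\leq$, locate a clause $C$ with at least two $\geq$-literals in a reduced min-normal-form definition of a relation not preserved by $\mi$, define $R^{\min}_{\leq}$ from it, and then derive $R^{\mi}$ and $\J$ exactly as you do. However, your formula $\psi$ has a genuine defect that is not merely an unfinished case analysis: you substitute the free variable $z$ directly for the coordinate $x_3$ of $R$, while $y$ is only linked to $x_2$ through the slack constraint $x_2 \geq y$. This asymmetry breaks the converse inclusion. Concretely, let $R \ceq \{(p,q,r) \in \Q^3 \mid (p \geq q \vee p \geq r) \wedge r \geq p\}$ and $\fA \ceq (\Q;<,R)$. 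This $\fA$ is preserved by $\min$ (both clauses have the form of Theorem~\ref{thm:SNF_base}~\ref{item:SNF_Min}) but neither by $\mi$ nor by $\mx$: applying either to $(0,0,1),(0,1,0)\in R$ yields a tuple whose order type violates $r\geq p$. The definition is reduced and no $\geq$ can be replaced by $>$, and $C$ is the first clause, so your construction gives $\psi(x,y,z) = \exists x_2\,\big(R(x,x_2,z)\wedge x_2\geq y\big)$. Then $\psi(0,-1,-1)$ is false, because every tuple of $R$ satisfies $r\geq p$, although $(0,-1,-1)\in R^{\min}_{\leq}$. Moreover, the repair you sketch --- producing the missing order types by combining the reducedness witnesses with $\min$ and automorphisms --- cannot succeed for this $\psi$: $\min$ and all automorphisms preserve the constraint $x_3\geq x_1$ satisfied by all of $R$, so no tuple with $x_3<x_1$ can ever be generated, yet $\psi(a,b,c)$ with $c<a<b$ would require one.

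The paper's formula avoids the problem by treating both $\geq$-literals symmetrically: it existentially quantifies the coordinate corresponding to $z$ as well and adds the slack conjunct $x_3 \geq z$ (in the paper's notation, $y_1\geq u \wedge y_2 \geq v$). With that indirection the converse direction becomes a two-line argument and needs no combination of witnesses at all: for $a\geq b$ one applies an automorphism to the reducedness witness $t_1$ (which satisfies $x_1=x_2<x_3$ and falsifies all remaining literals of $C$) so that $x_1=x_2=a$ and $x_3>\max(a,c)$, and symmetrically with $t_2$ when $a\geq c$. Two smaller points: the variables of $\phi$ not occurring in $C$ must also be existentially quantified in $\psi$, and to obtain witnesses with $x_1=x_2$ (rather than merely $x_1\geq x_2$), which you need for the boundary case $b=a$, you should additionally assume that no literal $x_1\geq x_i$ of $C$ can be replaced by $x_1>x_i$ without changing the defined relation; this strengthening of reducedness is harmless since such replacements preserve the min-normal form.
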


 \begin{proof} 
 	Let $R$ be a relation of $\fA$ that is not preserved by $\mi$ and let $n$ be the arity of $R$.   As $R$ is preserved by $\min$, it is definable by a conjunction $\phi$ of formulas where each conjunct is of the form as described in Theorem~\ref{thm:SNF_base}~\ref{item:SNF_Min}.  Furthermore, there must be a clause $C$ in $\phi$ that is not preserved by $\mi$. By Theorem~\ref{thm:SNF_base}~\ref{item:SNF_Mi} $C$ is of the form
        \begin{displaymath}
          x>x_{1}\vee \cdots \vee x>x_{\ell} \vee x\geq y_{1}\vee \cdots \vee x \geq y_{k}
        \end{displaymath}
with $k > 1$.
Furthermore, we can assume that $\phi$ is in reduced CNF.
Hence, there exist tuples $t_1, t_2 \in R$ witnessing that the literals $x\geq y_1$ and $x\geq y_2$ cannot be replaced by $x>y_1$ and by $x > y_2$, respectively, i.e.,
\begin{align*}
  t_1 \text{ satisfies } \quad& x=y_1 \AND x<y_2 \AND \bigwedge_{i=1}^\ell x\leq x_i \AND \bigwedge_{i=3}^k x<y_i,\\
  t_2 \text{ satisfies } \quad& x<y_1 \AND x=y_2 \AND \bigwedge_{i=1}^\ell x\leq x_i \AND \bigwedge_{i=3}^k x<y_i.
\end{align*}
%. 
Let $z_{1},\dots,z_{m}$ be all the variables from $\phi$ that do not occur in $C$. Without loss of generality, we may assume that the coordinates of $R$ are in the following order: $x,x_{1},\dots, x_{\ell},y_{1},\dots ,y_{k}$, $z_{1},\dots, z_{m}$.
As $\fA$ is not preserved by $\mx$, Lemma~\ref{thm:NoLeqMxOrmi} implies that $\leq$ has a primitive positive definition in $\fA$;
 so we may assume that $\leq$ is among the relations of $\fA$. 
We claim that $R^{\min}_{\leq}$ can be defined over $\fA$ by the 
primitive positive formula 
$\phi(x,u,v)$ given as follows.

\begin{align}
 \exists z_{1},\dots,z_{m}, x_{1},\dots,x_{\ell}, y_{1},\dots,y_{k} 
 & \Big( R(x,x_{1},\dots, x_{\ell},y_{1},\dots ,y_{k},z_{1},\dots, z_{m}) \nonumber \\
 %  \label{eq:smin} \\
& 
\wedge y_{1}\geq u \AND y_{2}\geq v  \wedge  \bigwedge_{i=1}^\ell x\leq x_{i}   \wedge \bigwedge_{i=3}^k x< y_{i} 
 \Big) \nonumber
\end{align}

To prove the claim, let $(a,b,c)\in R^{\min}_{\leq}$. Assume that $a \geq b$. 
There exists $\alpha\in \Aut(\fA)$ such that $t_1' \ceq \alpha(t_1)$ satisfies $t_1'[x] = a$ and $t_1'[y_2] > \max(a, c)$. 
Now we extend $t_1'$ by two coordinates, named $u$ and $v$ such that $t_1'[u] = b$ and $t_1'[v] = c$. 
Then $\pi_{\set{x,u,v}}(t_1') = (a,b,c)$ and $t_1'$ satisfies the quantifier-free part of $\phi$. Therefore, $\phi(a,b,c)$ holds. 
The case where $a\geq c$ holds is handled analogously using $t_2$ instead of $t_1$.

Now suppose that $(a,b,c)$ satisfies $\phi(x,u,v)$ and let $t^*$ be any tuple which satisfies the quantifier-free part of $\phi$ such that $\pi_{\set{x,u,v}}(t^*) = (a,b,c)$. Then $t^*$ satisfies $C$, and hence $t^*$ satisfies $x\geq y_1 \OR x\geq y_2$. Therefore, $t^* $ satisfies $x\geq u \OR x\geq v$, i.e., $t\in R^{\min}_{\leq}$.
It is easy to check that the formula $\exists h \, (\phi(x,h,y) \wedge h>z)$
is a primitive positive definition of $R^{\mi}$ in $\fA$. Therefore, $\J$ is primitively positively definable in $\fA$ as well (see note below Theorem~\ref{thm:SNF_base}). 
 \end{proof}

\subsection{Definability Dichotomy}
\label{sec:dicho}
In this section we prove Theorem~\ref{thm:dichotomy}, following the strategy outlined earlier, and subsequently we prove Theorem~\ref{thm:BinInjOrJ}

\begin{prop}\label{thm:pp-new}
A temporal relation has a primitive positive definition in $(\Q;T_3)$
if and only if it is preserved by $\pp$. 
\end{prop}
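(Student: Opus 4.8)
The statement has an easy direction and a substantial one. For the easy direction, $T_3$ is preserved by $\pp$, and the class of relations preserved by a fixed operation is closed under primitive positive definitions (both existential quantification and conjunction preserve polymorphisms), so every relation with a pp-definition in $(\Q;T_3)$ is preserved by $\pp$.

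For the converse I would argue at the level of clones. Since $\pp$ preserves $T_3$ and $T_3$, being temporal, is preserved by $\Aut(\Q;<)$, the locally closed clone $\Pol(\Q;T_3)$ contains $\overline{\langle\{\pp\}\cup\Aut(\Q;<)\rangle}$. The plan is to show this inclusion is an equality. Granting that, since $(\Q;T_3)$ is $\omega$-categorical, Theorem~\ref{thm:InvPol} yields that a temporal relation has a pp-definition in $(\Q;T_3)$ if and only if it is preserved by $\Pol(\Q;T_3)=\overline{\langle\{\pp\}\cup\Aut(\Q;<)\rangle}$, i.e.\ if and only if it is preserved by $\pp$. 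To get the equality I would invoke the classification of temporal polymorphism clones from~\cite{tcsps-journal}: reading off the lattice, every locally closed clone that properly contains $\overline{\langle\{\pp\}\cup\Aut(\Q;<)\rangle}$ contains $\min$, $\mi$, or $\mx$; but $T_3$ is preserved by none of $\min,\mi,\mx$ (it is not preserved by any operation in item~(1) of Theorem~\ref{thm:bod-kara}, which is exactly why $\CSP(\Q;T_3)$ is NP-complete), so $\Pol(\Q;T_3)$ cannot properly contain $\overline{\langle\{\pp\}\cup\Aut(\Q;<)\rangle}$.

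Alternatively, and more explicitly, one can finish via Theorem~\ref{thm:gen-pp}: every temporal relation preserved by $\pp$ has a pp-definition in $(\Q;\neq,R^{\min}_{\leq},S^{\mi})$, so by composing pp-definitions it suffices to give pp-definitions of $\neq$, $R^{\min}_{\leq}$, and $S^{\mi}$ in $(\Q;T_3)$. Here $x\neq y\equiv \exists w\,T_3(w,x,y)$ (a witness, namely $w=\min(x,y)$, exists precisely when $x\neq y$) and $x\leq y\equiv \exists w\,T_3(x,y,w)$ (take $w>x$ if $x=y$ and $w=x$ if $x<y$; conversely $T_3(x,y,w)$ forces $x\leq y$), and
\[R^{\min}_{\leq}(x,y,z)\equiv \exists m,z'\ \big(T_3(m,y,z')\AND z\leq z'\AND m\leq x\big),\]
which is sound because $m=\min(y,z')\geq\min(y,z)$ whenever $z'\geq z$, and complete by choosing $z'=z$ when $y\neq z$ and $z'>z$ (which forces $m=y$) when $y=z$; the conjuncts $z\leq z'$ and $m\leq x$ are pp over $(\Q;T_3)$ by the $\leq$-gadget above.

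The main obstacle in this explicit route is the pp-definition of $S^{\mi}(x,y,z)=\{x\neq y\OR x\geq z\}$ in $(\Q;T_3)$: this is the only clause shape that genuinely mixes the equality-negation and the order behaviour of $T_3$, and a naive attempt fails because any $T_3$-atom used to force $x\neq y$ forces it unconditionally, whereas in $S^{\mi}$ the inequality is needed only in the orbit $x=y<z$. Pinning down a gadget — several existentially quantified variables together with a case split on the relative order of $x,y,z$ — that forces the inequality exactly in that orbit and is vacuous otherwise is the delicate step; this is precisely the technical content that the clone-lattice argument above sidesteps, where the corresponding delicate point is instead reading off the exact list of covers of $\overline{\langle\{\pp\}\cup\Aut(\Q;<)\rangle}$ from the classification in~\cite{tcsps-journal}.
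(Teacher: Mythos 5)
Your forward direction and the reduction of the converse to Theorem~\ref{thm:gen-pp} match the paper's strategy, and your gadgets for $\neq$, $\leq$, and $R^{\min}_{\leq}$ in $(\Q;T_3)$ are correct. But the proof is not complete, for two reasons. First, the clone-lattice route is circular as stated: the fact you want to ``read off'', namely that every locally closed clone properly containing $\overline{\langle\{\pp\}\cup\Aut(\Q;<)\rangle}$ contains $\min$, $\mi$, or $\mx$, is (via Theorem~\ref{thm:InvPol}) essentially equivalent to the proposition itself, and it does not follow from Theorem~\ref{thm:foundational2}. That theorem only says that a clone above $\overline{\langle\{\pp\}\cup\Aut(\Q;<)\rangle}$ which does \emph{not} preserve $T_3$ must contain $\min$, $\mi$, or $\mx$; it says nothing about a hypothetical clone strictly between $\overline{\langle\{\pp\}\cup\Aut(\Q;<)\rangle}$ and $\Pol(\Q;T_3)$, which is exactly what you would need to exclude. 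No result quoted in the paper gives you the cover structure of the lattice above this clone.

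Second, in the explicit route you stop exactly at the one generator that is actually needed to finish: you never produce a primitive positive definition of $S^{\mi}=\{(x,y,z)\mid x\neq y\vee x\geq z\}$, and you assert that forcing $x\neq y$ via a $T_3$-atom must do so unconditionally. That is not so: the disjunction can be carried by the $T_3$-atom itself rather than by an external case split. The paper uses
$\exists u,v\,\bigl(T_3(x,u,v)\wedge u\neq y\wedge v\geq z\bigr)$,
where $T_3(x,u,v)$ forces $x=u$ or $x=v$; in the first case $x=u\neq y$, in the second $x=v\geq z$, so the formula is sound, and it is complete because for $x\neq y$ one can take $u=x$, $v>\max(x,z)$, while for $x\geq z$ one can take $v=x$, $u>\max(x,y)$. (The auxiliary relations $\neq$ and $\geq$ are available by your own gadgets.) Without this, or some substitute for it, the converse direction is unproven.
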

\begin{proof}
By Theorem~\ref{thm:gen-pp}, it suffices to
prove that the relations $\neq$, $R^{\min}_{\leq}$, and
$S^{\mi}$ are primitively positively definable in $(\Q;T_3)$. 
Clearly, $x \leq y$ is equivalent to $\exists z. \, T_3(x,y,z)$ and $x\neq y$ is equivalent to $\exists z.\, T_3(z,x,y)$. 
We claim that the following primitive positive formula defines  $R^{\min}_{\leq}$ in $(\Q;T_3,\leq)$. 
\begin{align*}
\phi(x,y,z) \ceq \exists x',y',z' \big(T_3(x',y',z') \AND x\geq x' \AND y
\leq y' \AND z\leq z'\big)
\end{align*}
Suppose that  $(a,b,c) \in R^{\min}_{\leq}$ holds. By the symmetry of the second and third argument  in $R^{\min}_{\leq}$ we may assume that $a \geq b$ holds. 
Choose $a'=b'$ such that $b\leq a' = b' \leq a$ holds and $c' > \max(a',b',c)$. Then $T_3(a',b',c') \AND a\geq a' \AND b
\leq b' \AND c < c'$ holds and therefore $(a,b,c)$ satisfies $\phi$.  
For the converse direction, suppose for contradiction that $(a,b,c)$ is not in $R^{\min}_{\leq}$ but $\phi(a,b,c)$ holds. Then we
have $a<b \AND a<c$. The quantifier-free part of $\phi$ implies $x'\leq
a < b \leq y'$ and therefore $x' = z' < y'$. However, $c \leq z' = x'
\leq a$ follows, contradicting $a < c$.

Finally, we claim that the formula
\begin{displaymath}
 \psi(x,y,z) \ceq \exists u,v\big(T_3(x,u,v) \wedge (u \neq y) \wedge (v \geq z)\big)
\end{displaymath}
defines $S^{\mi}$. If $(a,b,c)$ satisfies $\psi$ we either have $a=u\neq b$ or $a=v \geq c$. Therefore $(a,b,c)$ satisfies $S^{\mi}$. If $(a,b,c)$ satisfies $S^{\mi}$ we have two cases. If $a\neq b$, we choose $u=a$ and $v>\max(c,a)$. Then $b\neq a=u<v$ and $v>c$ holds and therefore $\psi(a,b,c)$ holds. If $c \leq a$ holds, then we choose $v = a$ and $u>\max(a,b)$. Then $c \leq a=v<u \neq b$ holds, i.e., $\psi(a,b,c)$ holds.
\end{proof}

 \begin{proof}[Proof of Theorem~\ref{thm:dichotomy}]  
 $\Longrightarrow$: Suppose that $\J$ has a primitive positive definition in $\fA$. 
 Then $\fA$ is not preserved by $\ll$ because $\J$ is not preserved by $\lex$: consider for instance $\lex((0,0,1),(2,3,0))$, which is in the same orbit as $(0,1,2)$ and therefore not in $\J$.

 $\Longleftarrow$: Suppose that $\fA$ is not preserved by $\ll$. 
If the relation $T_3$ is primitively positively definable in $\fA$, 
then so is $\J$ by Proposition~\ref{thm:pp-new} because $\J$ is preserved by $\pp$ and we are done. Otherwise,  Theorem~\ref{thm:foundational2} implies 
that $\fA$ is preserved by $\mi$, $\mx$, or $\min$. 
If $\fA$ is preserved by $\mi$, then $\J$ is primitively positively definable in $\fA$ by Proposition~\ref{thm:mi}. 
 If $\fA$ is preserved by $\mx$ but not by $\mi$, 
 then $\J$ is primitively positively definable in $\fA$ by Proposition~\ref{thm:mx}. 
 If $\fA$ is preserved by $\min$ but neither by $\mi$ nor by $\mx$, then $\fA$ primitively positively defines $\J$ by Proposition~\ref{thm:min}.
 \end{proof}

\begin{proof}[Proof of Theorem~\ref{thm:BinInjOrJ}]
%If $\ll$ is polymorphism of $\fA$, then $\fA$ has a binary injective polymorphism.
Suppose that $\fA$ does not have a binary injective polymorphism.
Then $\fA$ is preserved by $\min, \mi$, $\mx$, or their duals. Therefore, $\fA$ is preserved by $\pp$ or $\dpp$ by the inclusions presented in Section~\ref{sec:polysOfTemporal}.
If $\fA$ is preserved by $\pp$, then Theorem~\ref{thm:dichotomy} implies that $\J$ is primitively positively definable in $\fA$. 
If $\fA$ is preserved by $\dpp$, the dual of $\fA$, i.e., the structure obained from $\fA$ by substituting all relations by their duals, has $\pp$ as a polymorphism. Hence, $\J$ has a primitive positive definition in the dual of $\fA$ and therefore $-\J$ has a primitive positive definition in $\fA$.

It remains to show that 
 the two cases of the theorem are mutually exclusive. 
Suppose that $\fA$ has a binary injective polymorphism $f$; 
% $f\colon \Q^2 \rightarrow \Q$ be a binary injective operation and 
we may also assume without loss of generality
that $f(0,1) > f(1,0)$. 
Since $f(0,1) \neq f(0,2)$ we have $(f(0,1),f(0,2),f(1,0)) \notin \J$.
As $(0,0,1),(1,2,0) \in \J$ we have that 
$\J$ is not preserved by $f$ and hence does not have a primitive positive definition in $\fA$. The dual case works analogously. Therefore,  primitive positive definability of $\J$ in $\fA$ and binary injective polymorphisms in $\Pol(\fA)$ are mutually exclusive by Theorem~\ref{thm:InvPol}. 
\end{proof}

\section{Combinations of Temporal CSPs}
\label{sec:combine}
In this section we prove that the every generic combination of the structure $({\mathbb Q};<,\J)$ with another structure that can prevent crosses has an NP-hard CSP (Theorem~\ref{thm:JPreventCrossesHard}). 
We then derive our complexity classification for
the CSP of combinations of temporal structures (Theorem~\ref{thm:dichotomyExpansionsQ}). 
In our NP-hardness proof we use the following.

\begin{prop}[Corollary 6.1.23 in~\cite{Book}]\label{thm:ecsp-hard}
Let $\fA$ be a countably infinite $\omega$-categorical structure with finite relational signature and without constant polymorphisms. If all polymorphisms of $\fA$ are essentially unary then $\CSP(\fA)$ is NP-hard.
\end{prop}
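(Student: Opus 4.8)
The plan is to derive NP-hardness from the standard algebraic hardness criterion: if $\Pol(\fA)$ admits a clone homomorphism onto the projection clone $\mathrm{Proj}$, then $\CSP(\fA)$ is NP-hard. The statement is classical, so I only sketch the argument and indicate which part carries the weight.

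Since $\fA$ has no constant polymorphism, every (essentially unary) $f\in\Pol(\fA)$ of arity $n$ can be written as $f(x_1,\dots,x_n)=g(x_i)$ for some index $i\in[n]$ and some \emph{non-constant} unary operation $g$; moreover the index $i$ is uniquely determined by $f$, since two such representations with distinct indices would force $g$ to be constant. Hence the map $\xi\colon\Pol(\fA)\to\mathrm{Proj}$ sending such an $f$ to the projection $\pi^n_i$ is well defined. First I would verify that $\xi$ is a clone homomorphism: it fixes each projection, and if $g$ depends on its $k$-th argument while each $h_j$ depends on its $\ell_j$-th argument, then $g(h_1,\dots,h_n)$ depends only on its $\ell_k$-th argument, so $\xi$ respects composition; it is onto because $\xi(\pi^n_i)=\pi^n_i$. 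A short computation also shows $\xi$ is uniformly continuous: as $g$ is non-constant, already the restriction of $f$ to a large enough finite power of the domain is non-constant, and that restriction determines the argument on which $f$ depends.

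Next I would invoke the standard hardness theorem for $\omega$-categorical templates (see~\cite{Book}): if $\Pol(\fA)$ admits a uniformly continuous clone homomorphism onto $\mathrm{Proj}$, then $\fA$ primitively positively interprets, in some primitive positive power of itself, every finite structure, in particular the triangle $K_3$. Since such interpretations induce polynomial-time reductions between the corresponding CSPs of $\omega$-categorical templates, and $\CSP(K_3)$ is $3$-colourability, it follows that $\CSP(\fA)$ is NP-hard. One could instead bypass the abstract machinery and construct by hand a primitive positive interpretation of $(\{0,1\};\{(1,0,0),(0,1,0),(0,0,1)\})$ — whose CSP is positive $1$-in-$3$ satisfiability — in a suitable primitive positive power of $\fA$, using that the essentially unary polymorphisms of $\fA$ act as projections on the coordinates of such a power.

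The construction of $\xi$ in the second paragraph is a routine unravelling of definitions. The part that does the real work, and the one I would be relying on rather than reproving, is the general theory behind the third paragraph: that for an $\omega$-categorical structure with finite relational signature, a clone homomorphism onto the projections really does yield an NP-hardness reduction through primitive positive constructions. This is precisely where $\omega$-categoricity is consumed — it is what makes primitive positive interpretations and powers preserve the complexity of the associated CSP up to polynomial-time reductions.
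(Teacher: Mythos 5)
The paper does not prove this proposition at all --- it is imported as Corollary~6.1.23 of~\cite{Book} --- so there is no internal proof to compare against. Your sketch is a correct modern derivation of this classical fact: building a surjective clone homomorphism $\xi\colon\Pol(\fA)\to\mathrm{Proj}$ from the essential unarity hypothesis and then invoking the hardness machinery for uniformly continuous clone homomorphisms onto the projections (pp-constructibility of all finite structures, hence a reduction from $3$-colourability or positive $1$-in-$3$-SAT) is a legitimate route, and the algebraic verifications in your second paragraph (uniqueness of the distinguished index, compatibility with composition, surjectivity) all go through, precisely because the absence of constant polymorphisms forces every unary part $g$ to be non-constant.

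The one step that is genuinely too quick is the uniform continuity of $\xi$. Your argument --- for each $f$ there is a large enough finite set on which the non-constancy of $g$ is visible --- establishes continuity at each $f$ separately, but uniform continuity demands, for each arity $n$, a \emph{single} finite set $S\subseteq A$ such that $f|_{S^n}$ determines $\xi(f)$ for all $n$-ary $f\in\Pol(\fA)$ simultaneously; a priori different polymorphisms could require ever larger sets to witness non-constancy of their unary parts. This is exactly the point where $\omega$-categoricity and the no-constant-polymorphism hypothesis must be used together: if for every finite $S$ some endomorphism of $\fA$ were constant on $S$, then, after correcting by automorphisms (using that there are only finitely many orbits of elements) and passing to a pointwise limit in the locally closed monoid $\End(\fA)$, one would obtain a constant endomorphism, contradicting the hypothesis. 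Hence there is one finite $S_0$ on which no unary polymorphism is constant, and $S=S_0$ works uniformly for every arity. With that repair the argument is complete; the remaining weight rests, as you say, on the cited theorem that a uniformly continuous clone homomorphism onto the projections yields NP-hardness for $\omega$-categorical templates with finite relational signature.
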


The next definition introduces the key property of the polymorphisms of $(\Q;\J)$.

\begin{defi}
For any $n,i\in \N$, $1\leq i\leq n$, $a\in \Q^n$,  and operations $f\colon \Q^n \rightarrow \Q$ we define
\begin{align*}
%  x < y &:\Leftrightarrow  \forall 1\leq k\leq n\colon y_k < x_k \quad(\text{and likewise for } \leq)\\
  H(a,i)& \coloneqq \set{b\in \Q^n \mid \text{for all } j \in [n] \setminus \{i\} \text{ we have } b_j > a_j \text{ and } b_i = a_i}\\
  \text{and} \quad I_f(a) & \coloneqq \set{i\in \N \mid f \text{ is constant on } H(a,i)}.
\end{align*}
Let $\cK$ be the set of all operations $f\colon \Q^{n}  \rightarrow \Q$ with $n \geq 1$ where $I_f(a) \neq \emptyset$ for all $a\in \Q^{n}$. %It is worth while to visualize $H(x,i)$ for $n=2$ and $n=3$ and to draw some binary function in $\cK$ yourself, in particular one that preservers $<$.
\end{defi}

Examples of operations in $\cK$ are $\min$, $\mi$, $\mix$, $\mx$, $\pp$, and all unary operations. 
Non-examples are $\max$ and $\ll$.

\begin{lem}\label{thm:PolJInK}
All polymorphisms of $(\Q; \J)$ are in $\cK$.
\end{lem}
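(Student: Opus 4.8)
The plan is to argue by contradiction. Fix a polymorphism $f$ of $(\Q;\J)$ of arity $n$ and suppose that $I_f(a)=\emptyset$ for some $a\in\Q^n$, i.e.\ that $f$ is non-constant on every $H(a,i)$, $i\in[n]$. (We may assume $n\geq 2$, since for $n=1$ we have $H(a,1)=\{a\}$ and $1\in I_f(a)$ trivially.) The first step is to upgrade each of these non-constancies to a \emph{single-coordinate} witness. Observe that $H(a,i)$ is closed under coordinatewise maximum; hence from any two points of $H(a,i)$ with distinct $f$-values one obtains, by replacing one of them with the coordinatewise maximum of the two, a pair $b\leq b'$ in $H(a,i)$ that is comparable coordinatewise and still satisfies $f(b)\neq f(b')$. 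Changing the coordinates of $b$ one at a time until $b'$ is reached, some single step must already change the value of $f$. This produces, for every $i\in[n]$, an index $j(i)\in[n]\setminus\{i\}$ (indeed $j(i)\neq i$, since both witnesses have $i$-th coordinate equal to $a_i$) together with tuples $u^{(i)},v^{(i)}\in H(a,i)$ that agree in all coordinates except $j(i)$, with $u^{(i)}_{j(i)}<v^{(i)}_{j(i)}$ and $f(u^{(i)})\neq f(v^{(i)})$; write $m_i:=\min\bigl(f(u^{(i)}),f(v^{(i)})\bigr)$.

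The second step extracts a boundedness statement from $\J$-preservation. For an arbitrary $c\in\Q^n$, apply $f$ to the three columns $u^{(i)},v^{(i)},c$: for $k\neq j(i)$ the row $(u^{(i)}_k,v^{(i)}_k,c_k)$ lies in $\J$ because its first two entries coincide, while the row at $k=j(i)$, namely $(u^{(i)}_{j(i)},v^{(i)}_{j(i)},c_{j(i)})$ with $u^{(i)}_{j(i)}<v^{(i)}_{j(i)}$, lies in $\J$ precisely when $c_{j(i)}<u^{(i)}_{j(i)}$. So whenever $c_{j(i)}<u^{(i)}_{j(i)}$, preservation of $\J$ forces $\bigl(f(u^{(i)}),f(v^{(i)}),f(c)\bigr)\in\J$; since the first two entries differ, this yields $f(c)<m_i$. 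In other words, $f$ is bounded above by $m_i$ on the whole halfspace $\{c\in\Q^n\mid c_{j(i)}<u^{(i)}_{j(i)}\}$.

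The final step is a cyclic descent. The map $i\mapsto j(i)$ on the finite set $[n]$ has no fixed point, so its functional graph contains a cycle $i_1\mapsto i_2\mapsto\cdots\mapsto i_\ell\mapsto i_1$ with $\ell\geq 2$ and pairwise distinct $i_s$. For each $s$ the tuple $u^{(i_{s+1})}$ has $i_{s+1}$-th coordinate equal to $a_{i_{s+1}}$, whereas $u^{(i_s)}\in H(a,i_s)$ has $i_{s+1}$-th coordinate strictly larger than $a_{i_{s+1}}$ (using $i_{s+1}\neq i_s$). Since $j(i_s)=i_{s+1}$, the bound from the second step applies with $c=u^{(i_{s+1})}$ and equally with $c=v^{(i_{s+1})}$, giving $m_{i_{s+1}}<m_{i_s}$ (indices taken cyclically). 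Chaining these inequalities around the cycle produces $m_{i_1}<m_{i_1}$, a contradiction. Hence $I_f(a)\neq\emptyset$ for every $a$, i.e.\ $f\in\cK$.

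I expect the only delicate point to be the reduction to single-coordinate witnesses in the first step; once one has a pair of tuples in $H(a,i)$ differing in exactly one coordinate, everything else is a short and robust combinatorial argument. The reason the argument goes through so cleanly is that $\J$ is designed precisely so that columns with equal first two entries impose no constraint at all, while a single strict descent ``$c_{j(i)}$ smallest'' in one row forces a strict descent in the output — which is exactly what turns the fixed-point-free map $j$ into a cyclic chain of strict inequalities among the thresholds $m_i$.
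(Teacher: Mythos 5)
Your proof is correct, and it takes a genuinely different route from the paper's. The paper argues by induction on the arity $n$: assuming $f\notin\cK$ is witnessed at some $c$, it picks witness pairs in each $H(c,k)$, normalises so that one of the witness values is maximal, uses $\J$-preservation to show that $f$ is insensitive to the first coordinate on the relevant tuples once that coordinate is raised above a threshold, and then freezes the first coordinate to obtain an $(n-1)$-ary polymorphism still violating the $\cK$-condition, contradicting the induction hypothesis. You avoid induction entirely: you first sharpen each non-constancy on $H(a,i)$ to a pair differing in a single coordinate $j(i)\neq i$ (correctly using that $H(a,i)$ is closed under coordinatewise maximum and a one-coordinate-at-a-time walk), then turn each such pair via $\J$-preservation into a global bound $f(c)<m_i$ on the halfspace $c_{j(i)}<u^{(i)}_{j(i)}$, and finally derive a contradiction from a cycle of the fixed-point-free map $i\mapsto j(i)$. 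Both arguments hinge on the same feature of $\J$ --- if the first two outputs differ, the third must lie strictly below both --- but your decomposition (single-coordinate witnesses, halfspace bounds, cyclic descent) is structurally different from the paper's arity reduction. Your version is self-contained and extracts slightly more information (the explicit halfspace bounds on $f$), at the cost of the extra step 1 reduction; the paper's is shorter once the inductive framework is in place. All the details I checked go through: the witness coordinate is indeed $\neq i$ because both endpoints of the walk have $i$-th coordinate $a_i$; the row at $j(i)$ lies in $\J$ exactly when $c_{j(i)}<u^{(i)}_{j(i)}$; and in the cyclic step both $u^{(i_{s+1})}$ and $v^{(i_{s+1})}$ have $i_{s+1}$-th coordinate equal to $a_{i_{s+1}}$, which is strictly below $u^{(i_s)}_{i_{s+1}}$, so the descent $m_{i_{s+1}}<m_{i_s}$ is valid. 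Note also that, like the paper, you never need $f$ to preserve $<$, only $\J$, which is what the lemma requires.
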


\begin{proof}
Let $f\colon \Q^n \rightarrow \Q$ be a polymorphism of $(\Q;\J)$. We proceed by induction on $n \in {\mathbb N}$. If $n=1$, the statement is trivial. 
For $n\geq 2$, assume towards a contradiction that $f \not \in \cK$. Then there exists $c\in \Q^n$ such that for every $k \in [n]$ there exists $a^k, b^k \in H(c, k)$ such that $f(a^k) < f(b^k)$. 
Without loss of generality we may assume that $\max(f(b^1),\dots, f(b^n)) = f(b^1)$.
If there exists $k\neq 1$ and $e > b^1_1$ such that $f(a^k) \neq f(e,a^k_2,\dots,a^k_n)$, then  $(a^k_1, e, b^1_1) \in \J$  and $(a^k_l,a^k_l,b^1_l) \in \J$ for all $l \in \set{2,\dots,n}$, but $(f(a^k), f(e,a^k_2,\dots,a^k_n), f(b^1)) \not\in \J$ because $f(b^1) \geq f(a^k) \neq f(e,a^k_2,\dots,a^k_n)$, contradicting the assumption that $f$  preserves $\J$. 
%Hence, for every $k\neq 1$ and every $e > c_1=b^1_1$  we have $f(e, a^k_2, \dots, a^k_n) = f(a^k)$. 
Similarly, if there exists $k\neq 1$ and $e > b^1_1$ such that $f(b^k) \neq f(e,b^k_2,\dots,b^k_n)$, then  $(b^k_1, e, b^1_1) \in \J$  and $(b^k_l,b^k_l,b^1_l) \in \J$ for all $l \in \{2,\dots,n\}$, but $(f(b^k), f(e,b^k_2,\dots,b^k_n), f(b^1)) \not\in \J$.
 Hence, for every $k\neq 1$ and every $e > b^1_1$  we have 
\begin{displaymath}
  f(e, a^k_2, \dots, a^k_n) = f(a^k) <  f(b^k) = f(e, b^k_2, \dots, b^k_n).
\end{displaymath}
Choose $e>b^1_1$ and define $f' \colon \Q^{n-1} \to \Q$ as $(x_2,\dots,x_n) \mapsto f(e, x_2, \dots, x_n)$; as $\J$ is preserved by all constant polymorphisms, $f'$ is a composition of polymorphisms of $(\Q;\J)$ and hence a polymorphism of $(\Q;\J)$. 
Then for all $k\in \set{2,\dots,n}$ we have 
\begin{align*}
 (b^k_2, \dots, b^k_n),(a^k_2, \dots, a^k_n) &\in H((c_2, \dots, c_n),k) \quad \text{and} \\
 f'(a^k_2, \dots, a^k_n) = f(e,a^k_2, \dots, a^k_n) & < f(e,b^k_2, \dots, b^k_n) =  f'(b^k_2, \dots, b^k_n).
\end{align*}
Therefore, $f'$ is an $(n-1)$-ary polymorphism of $\fA$ which is not in $\cK$, a contradiction to the induction hypothesis. 
\end{proof}

\begin{lem}\label{thm:backwardPropagationOfRays}
Let $f\in \cK \cap \Pol(\Q;<)$ be of arity $n\geq 2$.
Let $a,b \in \Q^n$ and $i\in I_f(a)$. 
\begin{enumerate}
\item If $b_i < a_i \AND \bigwedge_{k\neq i} b_k > a_k$, then $I_f(b) = \set{i}$.
\label{case:strictlyBackwards}
\item If $b_i \leq  a_i \AND \bigwedge_{k\neq i} b_k \geq a_k$, then $i\in I_f(b)$. \label{case:nonStrictlyBackwards}
\end{enumerate}
  \end{lem}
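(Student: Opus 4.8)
The plan is to fix the index $i$ and dispose of the trivial case $b_i=a_i$ first: the hypothesis then gives $b_k\ge a_k$ for $k\neq i$, so $H(b,i)\subseteq H(a,i)$ and $f$ is constant on $H(b,i)$, which settles~(\ref{case:nonStrictlyBackwards}); this case cannot occur under the hypothesis of~(\ref{case:strictlyBackwards}). So assume henceforth $b_i<a_i$, and let $v$ be the value that $f$ takes on $H(a,i)$. The crucial step is the claim that \emph{for every $c\in\Q^n$ with $c_i<a_i$ and $c_k>a_k$ for all $k\in[n]\setminus\{i\}$, one has $I_f(c)=\{i\}$}. Since $f\in\cK$ we have $I_f(c)\neq\emptyset$, so it is enough to exclude $j\in I_f(c)$ for $j\neq i$. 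Suppose $f$ is constant on $H(c,j)$. The tuple with $i$-th entry $a_i$, $j$-th entry $c_j$, and all other entries $c_k+1$ lies in $H(c,j)\cap H(a,i)$ (here we use $c_i<a_i$ and $c_j>a_j$), so this constant value is $v$. On the other hand, the tuple $q$ with $q_i$ strictly between $c_i$ and $a_i$, $q_j=c_j$, and $q_k=c_k+1$ for $k\neq i,j$ lies in $H(c,j)$, the tuple $p$ with $p_i=a_i$, $p_j=c_j+1$, and $p_k=c_k+2$ for $k\neq i,j$ lies in $H(a,i)$, and $q<p$ coordinatewise; since $f$ preserves $<$, this forces $v=f(q)<f(p)=v$, a contradiction. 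This proves the claim.

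Assertion~(\ref{case:nonStrictlyBackwards}) follows from the claim in three steps. By the claim, $i\in I_f(c)$ for every $c\in H(b,i)$, since $c_i=b_i<a_i$ and $c_k>b_k\ge a_k$ for $k\neq i$. Next, any two octants $H(c,i)$ and $H(c',i)$ with $c,c'\in H(b,i)$ contain a common tuple, namely one with $i$-th entry $b_i$ all of whose other entries exceed the corresponding entries of both $c$ and $c'$; hence $f$ takes one and the same value, say $w$, on every octant $H(c,i)$ with $c\in H(b,i)$. Finally, given an arbitrary $c\in H(b,i)$, use density of $\Q$ to choose $c''\in H(b,i)$ with $c''_i=b_i$ and $b_k<c''_k<c_k$ for $k\neq i$; then $c\in H(c'',i)$, so $f(c)=w$. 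Thus $f$ is constantly $w$ on $H(b,i)$, that is, $i\in I_f(b)$. Since the hypothesis of~(\ref{case:strictlyBackwards}) strengthens that of~(\ref{case:nonStrictlyBackwards}), this also yields $i\in I_f(b)$ in case~(\ref{case:strictlyBackwards}); there it only remains to exclude $j\in I_f(b)$ for $j\neq i$. This is the argument of the claim with $b$ in place of $c$: if $f$ were constant on $H(b,j)$, then $H(b,j)\cap H(a,i)$ would be nonempty (this uses $b_j>a_j$, which is exactly the strict inequality in the hypothesis of~(\ref{case:strictlyBackwards})), so $f$ would equal $v$ there, and a coordinatewise strictly increasing pair $q\in H(b,j)$, $p\in H(a,i)$ would again give $v=f(q)<f(p)=v$. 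Hence $I_f(b)=\{i\}$.

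I expect the main obstacle to be the claim in the first paragraph: one has to realise that the backward tuples have $I_f$ \emph{equal} to $\{i\}$ rather than merely containing $i$, and then locate the two small geometric configurations — a tuple lying in $H(c,j)\cap H(a,i)$, and a coordinatewise strictly increasing pair straddling those two octants — that convert preservation of $<$ into a contradiction. The density argument in the second paragraph, which upgrades ``$i\in I_f(c)$ for all tuples $c$ in the octant'' to ``$f$ constant on all of $H(b,i)$'', is routine but should be spelled out in full.
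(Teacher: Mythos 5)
Your proof is correct and follows essentially the same route as the paper: part (1) via a point in $H(c,j)\cap H(a,i)$ plus a coordinatewise strictly increasing pair contradicting preservation of $<$, and part (2) by reducing to part (1) applied to a point chosen deeper inside the octant $H(b,i)$. The only cosmetic difference is that the paper handles (2) by picking two arbitrary points $u,v\in H(b,i)$ and a single common sub-octant below both, whereas you first show all sub-octants carry the same constant and then cover $H(b,i)$ by them; both are fine.
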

\begin{proof}
To prove (1), suppose for contradiction that 
$b_i < a_i \AND \bigwedge_{k\neq i} b_k > a_k$ and $j\in I_f(b)$ with $j\neq i$. Then there exists $c \in H(b,j)$ such that $b_i < c_i < a_i$. Now consider $d,e \in \Q^n$ such that 
  \begin{align*}
    &  d_j = c_j \AND d_i = a_i \AND \bigwedge_{k\not\in {i,j}} d_k > c_k \quad \text{ and } \quad 
     e_i = a_i \AND \bigwedge_{k\neq i} e_k > d_k.
  \end{align*}
  Then $e_j > d_j = c_j = b_j > a_j$, $d_i = a_i = e_i > c_i > b_i$, and $e_k > d_k > c_k > b_k > a_k$ for $k\in [n] \setminus \{i,j\}$. Hence, $d\in H(b,j) \cap H(a,i)$ and $e\in H(a,i)$. This implies that $f(c) = f(d) = f(e)$, which contradicts
  the assumption that $f$ preserves $<$, because $c_k < e_k$ for every $k \in [n]$. 
  Since $I_f(b) \neq \emptyset$ by assumption, we therefore conclude that 
  $I_f(b) = \{i\}$. 
  
To prove (2), first consider the case that $b_i = a_i$. Then $H(b,i) \subseteq H(a,i)$ and therefore $i\in I_f(b)$. 
  If $b_i<a_i$, choose $u,v\in H(b,i)$. Then there exists  $b'\in H(b,i)$ such that for each $k\neq i$ we have $b'_k < \min(u_k, v_k)$. Then $u,v\in H(b',i)$ and $b'$ satisfies $b'_i < a_i \AND \bigwedge_{k\neq i} b'_k > a_j$. Hence, we have $I_f(b') = \{i \}$ by the first claim of the statement and therefore $f(u) = f(v)$. As $H(b',i) \subseteq H(b,i)$ we conclude that $i\in I_f(b)$. 
\end{proof}

\begin{proof}[Proof of Theorem~\ref{thm:JPreventCrossesHard}]
 Let $\fB$ be the generic combination of $({\mathbb Q};<,\J)$ and $\fA$, which exists by Theorem~\ref{thm:existenceGenericComb}. Without loss of generality we may assume that the domain of $\fB$ is $\Q$ and that $\fA$ and $({\mathbb Q};<,\J)$ are reducts of $\fB$. 
  Let $f \in \Pol(\fB)$ be of arity $n$. 
 Our goal is to show that $f$ is  essentially unary; the NP-hardness of $\CSP(\fB)$ then follows from Proposition~\ref{thm:ecsp-hard}.

  By Lemma~\ref{thm:PolJInK} we have 
  $\Pol(\fB) \subseteq \Pol(\Q;<,\J) \subseteq \cK$ and therefore $f\in \cK$. 
  Suppose for contradiction that there are $a,b\in \Q^n$ such that $i\in I_f(a)$, $j\in I_f(b)$, and $i\neq j$. We will treat the case that $i=1$ and $j=2$; all other cases can be treated analogously.
  Let $\phi$ be a cross prevention formula of $\fA$. 
 %\red{NEW PROOF: 
 %Since $\fB$ is a generic combination,
 %there exists a tuple $a',b'$ such that 
 %$a'$ lies in the same $\Aut(\bA)$-orbit as $a$
 %and $b'$ lies in the same $\Aut(\bA)$-orbit as $b$  and $\phi(a_
 %}

Consider the following first-order formula
   $\psi(\bar x, \bar y,\bar u,\bar v)$ with parameters $a_1, \dots, a_n$, $b_1, \dots, b_n$.
  \begin{align*}
    \psi  \ceq\quad  & \; x_1 < a_1 \AND x_1 = y_1  \AND \bigwedge_{k\in [n]\setminus\set{1}} (x_k > a_k \AND y_k > a_k) \\
     \AND  & \; u_2 < b_2 \AND u_2=v_2 \AND \bigwedge_{k\in [n]\setminus\set{2}} (u_k > b_k \AND v_k > b_k)
  \end{align*}
For $k\in [n]$ let $\psi^{k}(x_k, y_k, u_k, v_k)$ be the conjunction of all atomic formulas in $\psi$ that contain $x_k$, $y_k$, $u_k$, or $v_k$. 
  Notice that every atomic formula in $\psi$ only contains variables from $\{x_k, y_k, u_k, v_k\}$ for a fixed $k$. 
  Hence, $\psi(\bar x, \bar y,\bar u,\bar v)$ is equivalent to $\bigwedge_{k=1}^n \psi^{k}(x_k, y_k, u_k,v_k)$.
  Let $\delta(z_1, z_2, z_3, z_4)$ be the first-order formula $$z_1 = z_2 \wedge z_2 \neq z_3 \wedge z_3 \neq z_4 \wedge z_2 \neq x_4.$$
  % $$z_1 = z_2 \AND \bigwedge_{1<i<j} z_i \neq z_j.$$ 
For each $k$ there exists an assignment 
$s_{k,1} \colon \{x_k,y_k,u_k,v_k\} \to \Q$ which satisfies $\psi^k$ and additionally satisfies 
$\delta(x_k,y_k,u_k,v_k) \vee \delta(u_k,v_k,x_k,y_k)$. 
%$$(\Q;<) \models \psi^k(s_{k,1}(x_k), s_{k,1}(y_k), s_{k,1}(u_k), s_{k,1}(v_k))$$ and 
%$$(\Q;<) \models \delta(s_{k,1}(x_k), s_{k,1}(y_k), s_{k,1}(u_k), s_{k,1}(v_k)) \vee \delta(s_{k,1}(u_k), s_{k,1}(v_k), s_{k,1}(x_k), s_{k,1}(y_k)).$$ 
For $k=1$ there exists an assignment $s_{1,2}$
that satisfies $\phi(x_k,y_k,u_k,v_k) \wedge \delta(x_k,y_k,u_k,v_k)$, and 
  for each $k>1$ there exists an assignment $s_{k,2}$ that satisfies
  $\phi(x_k,y_k,u_k,v_k) \wedge \delta(u_k,v_k,x_k,y_k)$. 
  %such that $\fA$ satisfies 
 % $\phi(s_{k,2}(x_k), s_{k,2}(y_k), s_{k,2}(u_k), s_{k,2}(v_k)) \; \AND \; \delta(s_{k,2}(u_k), s_2(v_k), s_{k,2}(x_k), s_{k,2}(y_k))$. 
For each $k$, both $s_{k,1}$ and $s_{k,2}$ can be chosen such that their images are disjoint to the set of all entries of $c \ceq (a_1, \dots, a_n, b_1, \dots, b_n)$, because both $(\Q;<)$ and $\fA$ do not have algebraicity.
  Now we apply the first statement of Lemma~2.7 in~\cite{BodirskyGreinerCombinations}  for each $k\in [n]$ to $c$, 
  $s_{k,1}$, $s_{k,2}$. 
  %$t_{k,1} = (s_{k,1}(x_k), s_{k,1}(y_k), s_{k,1}(u_k), s_{k,1}(v_k))$ and $t_{k,2} := (s_{k,2}(x_k), s_{k,2}(y_k), s_{k,2}(u_k), s_{k,2}(v_k))$. 
  This yields, for each $k\in [n]$, 
  a solution $s^k$ to $\psi^k \wedge \phi(x_k,y_k,u_k,v_k)$. 
 %  the existence of a tuple $t_k = (s(x_k), s(y_k), s(u_k), s(v_k))$ in $\fB$ which is in the same $\Aut(\Q;<,c)$-orbit as $t_{k,1}$ and in the same $\Aut(\fA,c)$-orbit as $t_{k,2}$. Therefore, $\psi^k(t_k) \AND \phi(t_k)$ holds in $\fB$. 
 % Hence, $\psi \AND \bigwedge_{k=1}^n \phi(x_k,y_k,u_k,v_k)$ is satisfiable in $\fB$.
  % and $s$ is a satisfying assignment.
  Let $s(x)$ denote $(s^1(x_1), \dots, s^n(x_n))$ and likewise for $s(y), s(u), s(v)$.
  Let $a'$ and $b'$ be the componentwise minimum of $a, s(x),s(y)$ and $b,s(u),s(v)$, respectively. Then $s(x),s(y) \in H(a',1)$
  and $s(u),s(v) \in H(b',2)$.
 We apply Case \ref{case:nonStrictlyBackwards} of Lemma~\ref{thm:backwardPropagationOfRays} to $a$ and $a'$ (in the role of $b$) and $i=1$ and get $1\in I_f(a')$. Similarly, we apply 
   Case \ref{case:nonStrictlyBackwards} of Lemma~\ref{thm:backwardPropagationOfRays} to $b$, $b'$ and $i=2$ and get $2\in I_f(b')$. 
  Therefore, $f(s(x)) = f(s(y))$ and $f(s(u))=f(s(v))$ must hold. 
  However, as $f$ preserves $\phi$ we must also have $\phi(f(s(x)), f(s(y)), f(s(u)), f(s(v)))$, contradicting the fact that $\phi(x,y,u,v) \AND x=y \AND u=v$ is not satisfiable in $\fA$. 
    
We conclude that there exists an $i \in [n]$ such that $I_f(a) = \{i\}$ for all $a\in \Q^n$. This implies that $f$ only depends on the $i$-th coordinate: to prove this, let $a,b\in \Q^n$ be such that $a_i=b_i$. We choose any $c \in \Q^n$ such that $c_i=a_i$ and $c_j <\min(a_{j},b_{j})$ for every $j\in [n]\setminus \{i\}$.
As $a,b\in H(c,i)$ and $i\in I_{f}(c)$ we have $f(a)=f(b)$, i.e., $f$ can only depend on the $i$-th coordinate. The case that $f$ is constant cannot happen, because $f$ preserves $<$.
Thus, $f$ is essentially unary. 
\end{proof}

Theorem~\ref{thm:JPreventCrossesHard} is applicable to countably infinite $\omega$-categorical structures with finite relational signature which can prevent crosses and do not have algebraicity. 
Besides $(\Q;<)$, the following structures satisfy all of these conditions:
\begin{itemize}
 \item the random graph with edge and non-edge relation~\cite{BodPin-Schaefer-both}
 \item the univeral homogeneous $K_n$-free graph, for $n \geq 3$, also called Henson graph~\cite{BMPP16} with edge relation
  \item first-order expansions of the binary branching $C$-relation in~\cite{Phylo-Complexity}
 \item the \Fresse -limit of all finite 3-uniform hypergraphs which do not embed a tetrahedron (see Chapter~6 in~\cite{Hodges} for the construction method)
\end{itemize}

\begin{proof}[Proof of Theorem~\ref{thm:dichotomyExpansionsQ}]
Let $\fB$ be the generic combination of $\fA_1$ and $\fA_2$, which exists by Theorem~\ref{thm:existenceGenericComb}.
We may assume that 
$\fB$, $\fA_1$, and $\fA_2$ all have the domain $\Q$ and that $\fA_1$ and $\fA_2$ are reducts of $\fB$. For $i=1$ and $i=2$, 
let $<_i$ be a linear order on $\Q$ such that all relations of $\fA_i$ are first-order definable in $(\Q;<_i)$; correspondingly $\Betw_i,\Cycl_i,\Sep_i,\J_i$ are defined as the relations $\Betw,\Cycl,\Sep,\J$ but with respect to $<_i$ instead of $<$. The same holds for $\min_i, \mi_i, \mx_i, \ll_i$ and their duals.

If both $\fA_1$ and $\fA_2$ have a constant polymorphism, then both $\fA_1$ and $\fA_2$ have all constant operations as polymorphisms, and it follows that $\fB$ has a constant polymorphism, too. In this case $\CSP(\fB) = \CSP(T_1 \cup T_2)$ can be solved in constant time because only instances with an empty relation or $\bot$ as conjunct are unsatisfiable (item (2) of the statement). Hence, we may suppose without loss of generality that $\fA_1$ does not have a constant polymorphism. Then by Theorem~\ref{thm:end}, one of the following cases applies.
\begin{itemize}
\item $\fA_1$ is preserved by all permutations;
\item the relation $\Betw_1$, $\Cycl_1$, or $\Sep_1$ is primitively positively definable in $\fA_1$;
\item the relation $<_1$ is primitively positively definable in $\fA_1$.
\end{itemize}
In the first case, $\fB$ itself is a temporal structure 
%and we are in the second case of the statement of the theorem. 
and $\CSP(\fB)$ is in P (item (3) of the statement) or NP-complete by Theorem~\ref{thm:bod-kara}. 
If one of the relations  $\Betw_1$, $\Cycl_1$, or $\Sep_1$ is primitively positively definable in $\fA_1$, then $\CSP(\fA_1)$ is NP-hard
 and hence
$\CSP(\fB)$ is NP-hard.
So we may assume in the following that $<_1$ is primitively positively definable in $\fA_1$. Hence, we can assume without loss of generality that $<_1$ is in the signature of $\fA_1$.

We now consider the case that both $\fA_1$ and $\fA_2$ have a binary injective polymorphism.
If for some $i \in \{1,2\}$ the problem $\CSP(\fA_i)$ is NP-hard, then clearly $\CSP(\fB) = \CSP(T_1 \cup T_2)$ is NP-hard as well. 
Otherwise, Theorem~\ref{thm:bod-kara} implies that for $i=1$ and $i=2$, the structure $\fA_i$ is preserved by $\ll$ or by $\dll$ (or P=NP, in which case Theorem~\ref{thm:dichotomyExpansionsQ} is trivial). Note that $\ll$ and $\dll$ also preserve $\neq$, so we may add $\neq$ to the signature of $\fA_1$ and $\fA_2$. 
As $\neq$ is independent from $T_1$ and from $T_2$ by Proposition~\ref{thm:indep}, the polynomial-time tractability of $\CSP(T_1 \cup T_2)$ then follows from Theorem~\ref{thm:P}.

If $\fA_1$ does not have a binary injective polymorphism, then $\CSP(\fA_1)$ and $\CSP(\fB)$ are NP-hard unless 
$\mx_1$, $\min_1$, $\mi_1$, or one of their duals is a polymorphism of $\fA_1$, by Theorem~\ref{thm:bod-kara}. 
We assume in the following that $\fA_1$ is preserved by $\mx_1$, $\min_1$, or $\mi_1$; if $\fA_1$ is preserved by one of their duals, then the NP-hardness of $\CSP(T_1 \cup T_2)$  can be shown analogously.
By Theorem~\ref{thm:BinInjOrJ}, the relation $\J_1$  has a primitive positive definition in $\fA_1$.

Now, we make a case distinction for $\fA_2$.
 If the structure $\fA_2$ is preserved by all permutations, we are again done (this is analogous to the situation that $\fA_1$ is preserved by all permutations, which was already treated above). 
Otherwise, we apply Theorem~\ref{thm:end} to $(\fA_2;\neq)$ and 
obtain that a relation  $R \in \{<_2,\Betw_2,\Cycl_2,\Sep_2\}$ has a 
primitive positive definition $\phi$ in $(\fA_2; \neq)$. Let $E$ be the set of all sets $\set{x_i, x_j}$ such that $x_i \neq x_j$ appears in $\phi$ and $n$ the arity of $R$. Then, for some $m\geq n$, the formula $\phi$ can be written in the following way:
$\phi(x_1, \dots, x_n) = \exists x_{n+1}, \dots, x_{m} \big(\psi(x_1, \dots, x_m) \AND \bigwedge_{{i,j}\in E} x_i \neq x_j\big)$
where $\psi$ is a primitive positive $\tau_2$-formula. 
Notice that for all $i,j\in [n] $ with $i\neq j$ we may add $\set{i,j}$ to $E$ because for any choice of $R$, all coordinates in tuples of $R$ are pairwise distinct.

Consider the undirected graph  $([m], E)$. We may choose any linear order $E_d'$ on $[n]$ and extend $E_d'$ to $E_d$ on $[m]$ by choosing a direction for each edge in $E$ such that $([m], E_d)$ is a cycle-free directed graph.
Because $x<y \OR x>y$ defines $x\neq y$ we have 

\begin{align*}
\phi(x_1, \dots, x_n) \equiv \exists x_{n+1}, \dots, x_{m} \Big(&(\psi(x_1, \dots, x_m) \AND \bigwedge_{(i,j) \in E_d} ((x_i <_1 x_j) \OR (x_j <_1 x_i))\Big).
\end{align*}

Now notice that 
$\exists x_{n+1}, \dots, x_{m} \big(\psi(x_1, \dots, x_m) \AND \bigwedge_{(i,j) \in E_d} x_i <_1 x_j\big)$ 
is a primitive positive formula in $\fB$ which defines the same relation as the formula
\begin{equation}\label{eq:shortNotationREd}
 R(x_1, \dots, x_n) \AND \bigwedge_{(i,j) \in E_d'} x_i <_1 x_j
\end{equation}
in $\fB$.
Now, we go through all possible choices for $R$ and present primitive positive definitions for either $<_2$ or $\neq$ in $\fB$.
In order to simplify the presentation, we will use conjuncts of the form~\eqref{eq:shortNotationREd} instead of their equivalent primitive positive definitions in $\fB$.

\begin{itemize}
 \item If $R$ equals $<_2$ then $\exists z\big((x<_2 z \AND x <_1 z) \AND (z <_2 y \AND y<_1 z))$ is a primitive positive definition of $x <_2 y$. This is easy to see with the equivalent expression $\exists z \big((x <_2 z <_2 y) \AND (x <_1 z) \AND (y <_1 z)\big)$.
 \item If $R$ equals $\Betw_2$ we claim that 
 \begin{align*}
  \exists u,v \big(&(\Betw_2(x,u,v) \AND (x <_1 u) \AND (u <_1 v))\\
   \AND\; &(\Betw_2(u,v,y) \AND (y <_1 u) \AND (u <_1 v))\big)
 \end{align*}
is a primitive positive definition of $x\neq y$. Again we give an equivalent expression which helps to verify the claim:
$\exists u,v \big(((x<_2 u <_2 v <_2 y) \OR (y <_2 v <_2 u <_2 x)) \AND (x <_1 u <_1 v) \AND (y <_1 u)\big)$.  In the latter, it is clear that $x\neq y$ always holds and that all distinct $x,y$ satisfy the formula.
 
 \item If $R$ equals $\Cycl_2$ we claim that 
  \begin{align*}
   \exists u,v \big(&(\Cycl_2(x,u,v) \AND (x <_1 u) \AND (u <_1 v)) \\ 
   \AND\;&(\Cycl_2(u,y,v) \AND (y <_1 u) \AND (u <_1 v))\big)
   \end{align*}
   is a primitive positive definition of $x\neq y$. A case analysis of $\Cycl_2(x,u,v)$ yields that the given formula is equivalent to
 \begin{align*}
  \exists u,v \big(\big(&\quad (x <_2 u <_2 y <_2 v)\\
  & \OR (u <_2 y <_2 v <_2 x)\\
  & \OR (y <_2 v <_2 x <_2 u) \\
  & \OR (v <_2 x <_2 u <_2 y) \big)\\
    \AND\; & (x<_1 u <_1 v) \AND (y<_1 u)\big).
 \end{align*}
In the latter formula, it is clear that $x\neq y$ must always hold and that for any distinct $x,y$ the formula is satisfiable.
 \item If $R$ equals $\Sep_2$ we claim that 
 \begin{align*}
  \exists u,v, w \big(&(\Sep_2(x,u,v,w) \AND (x <_1 u) \AND (u <_1 v) \AND (v <_1 w)) \\
  \AND\; & (\Sep_2(u,v, w,y) \AND (y <_1 u) \AND (u <_1 v) \AND (v<_1 w))\big)
  \end{align*}
is a primitive positive definition of $x\neq y$. Similarly to above, a case analysis of $\Sep_2$ yields an equivalent expression
\begin{align*}
  \exists u,v,w \big(\big(&\quad (x <_2 v <_2 y <_2 u <_2 w) \\
  &\OR (x <_2 w <_2 u <_2 y <_2 v) \\
  &\OR (u <_2 y <_2 v <_2 x <_2 w)\\
  &\OR (u <_2 w <_2 x <_2 v <_2 y) \OR  (y <_2 u <_2 w <_2 x <_2 v)\\
  &\OR (v <_2 x <_2 w <_2 y <_2 u) \\
  &\OR (v <_2 u <_2 y <_2 w <_2 x) \\
  &\OR (w <_2 y <_2 u <_2 v <_2 x) \\  
  &\OR (w <_2 x <_2 v <_2 u <_2 y) \OR (y <_2 w <_2 x <_2 v <_2 u) \big)\\
  \AND\; & (x <_1 u <_1 v <_1 w) \AND (y <_1 u)\big)
 \end{align*}
 for which the claim is easily verified because $x\neq y$ always holds and for any distinct $x,y$ there exist $u,v,w$ satisfying the formula.
\end{itemize}

Choose a relation $S$ from $\set{\neq, <_2}$ which is primitively positively definable in $\fB$ and let $\fA'_2$ be the expansion of $\fA_2$ by $S$ and $\fB' \ceq \fA_1 \ast \fA_2'$. As $S$ is primitively positively definable in $\fB$, it suffices to show NP-hardness of $\CSP(\fB')$ instead of $\CSP(\fB) = \CSP(T_1 \cup T_2)$. 

If $S$ is $<_2$, then $\fA_2'$ has cross prevention, so the NP-hardness of $\CSP(\fB')$ follows from Theorem~\ref{thm:JPreventCrossesHard}. 
If $S$ is $\neq$, then we may again apply Theorem~\ref{thm:end} to $\fA_2'$ to conclude that the relation $<_2$, $\Betw_2$, $\Cycl_2$, or $\Sep_2$ is primitively positively definable in $\fA'_2$.
The first case has already been treated above. 
In the remaining cases we get NP-hardness of $\CSP(\fA_2')$ and hence of $\CSP(\fB')$. 
\end{proof}

\section{Conclusion and Outlook}
Our results show that there are two temporal relations, namely $\J$ and its dual, with the property that every first-order expansion of $(\Q;<)$ where the weakened Nelson-Oppen conditions do not apply, i.e., $\neq$ is not independent from their theory, can define one of these relations primitively positively. 
We also showed that $\CSP({\Th(\Q;\J,<)} \cup \Th(\fA))$ is NP-hard for structures $\fA$ that satisfy the fairly weak assumption of cross prevention and have a generic combination with $(\Q;<)$. These results can be used to prove a complexity dichotomy for combinations of temporal CSPs: they are either in P or NP-complete. 
Our results also motivate the following conjecture, which remains open in general. 

\begin{conj}
Let $\fA_1$ and $\fA_2$ be countably infinite $\omega$-categorical structures without algebraicity that are not preserved by all permutations and that have the cross prevention property. 
If 
\begin{itemize}
\item $\CSP(\fA_i)$ is in P and $\fA_i$ has a binary injective polymorphism for both $i = 1$ and $i=2$, or
\item $\fA_i$ has a constant polymorphism for both $i = 1$ and $i=2$, 
\end{itemize}
then $\CSP(\Th(\fA_1) \cup \Th(\fA_2))$ is in P. 
Otherwise, $\CSP(\Th(\fA_1) \cup \Th(\fA_2))$ is NP-hard. 
 \end{conj}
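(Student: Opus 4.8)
The plan is to reproduce the architecture of the proof of Theorem~\ref{thm:dichotomyExpansionsQ}, replacing the inputs that there came from the classification of temporal CSPs by statements about general $\omega$-categorical structures; I expect that the only genuinely missing ingredient will be one such statement, on the hardness side. Throughout, write $\fB \ceq \fA_1 \ast \fA_2$ for the generic combination, which exists by Theorem~\ref{thm:existenceGenericComb} since neither $\fA_i$ has algebraicity, so that $\CSP(\fB) = \CSP(\Th(\fA_1) \cup \Th(\fA_2))$ and it suffices to analyse $\Pol(\fB)$. A preliminary observation simplifies the statement: if $\fA$ has a cross prevention formula $\phi(x,y,u,v)$ and a constant polymorphism $c_e$, then $c_e$ preserves the relation defined by $\phi$, hence $(e,e,e,e)$ satisfies $\phi$, contradicting condition~(3) of Definition~\ref{def:preventCrosses}. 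Thus under the hypotheses of the conjecture no $\fA_i$ has a constant polymorphism, the second tractability case is vacuous, and what must be proved is: $\CSP(\fB)$ is in P when both $\fA_i$ have a binary injective polymorphism and $\CSP(\fA_i)$ is in P, and NP-hard otherwise.

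For the tractability direction, assume both $\fA_i$ have a binary injective polymorphism $f_i$ and $\CSP(\fA_i)$ is in P. By Proposition~\ref{thm:indep}, $\neq$ is independent from $\Th(\fA_i)$, and $\Th(\fA_1)$ and $\Th(\fA_2)$ are cardinality compatible because, being $\omega$-categorical, every satisfiable instance is already satisfied in the countable model $\fA_i$. Theorem~\ref{thm:P} then yields that $\CSP(\Th(\fA_1) \cup \Th(\fA_2))$ is in P, provided $\neq$ has an existential positive definition in each $\Th(\fA_i)$. This is the single point where the passage beyond temporal structures is not for free: in the temporal setting \enquote{binary injective plus tractable} forces preservation by $\ll$ or its dual, which preserve $\neq$ and keep the CSP tractable even after $\neq$ is added. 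In general one would have to obtain the ep-definition of $\neq$ separately --- for instance by showing that the pp-formula $\exists x\, \phi(x,x,u,v)$ extracted from a cross prevention formula $\phi$, together with finitely many images under $\Aut(\fA_i)$, already exhausts $\neq$ --- or else add $\neq$ to each signature and additionally verify that $\CSP(\fA_i;\neq)$ remains in P; in the worst case one adopts the latter as a hypothesis.

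For the hardness direction, argue by contraposition. If $\CSP(\fA_i)$ is NP-hard for some $i$, then $\CSP(\Th(\fA_1)\cup\Th(\fA_2))$ contains it as a subproblem and we are done; otherwise both CSPs are in P and, without loss of generality, $\fA_1$ has no binary injective polymorphism. The target is to show that $\fA_1$ primitively positively defines two relations $<_1$ and $\J_1$ such that the structure carrying exactly those two relations on the domain of $\fA_1$ is isomorphic to $(\Q;<,\J)$ or to its order-reversal $(\Q;>,\J)$. Granting this, a routine chain of polynomial-time reductions --- replace $<_1$- and $\J_1$-atoms by their $\tau_1$-pp-definitions to reduce $\CSP(\Th(\fA_1;<_1,\J_1)\cup\Th(\fA_2))$ to $\CSP(\fB)$, then restrict the input signature to $\{<_1,\J_1\}\cup\tau_2$, using that $\Th(\fA_1;<_1,\J_1)$ restricted to $\{<_1,\J_1\}$ is the theory of $(\Q;<,\J)$ (or its dual) --- shows that $\CSP(\Th(\Q;<,\J)\cup\Th(\fA_2))$ (resp.\ the order-reversed version) reduces to $\CSP(\fB)$; and the former is NP-hard by Theorem~\ref{thm:JPreventCrossesHard} (resp.\ its dual), because $\fA_2$ has a finite relational signature, no algebraicity, and the cross prevention property. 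Hence $\CSP(\fB)$ is NP-hard, as in the proof of Theorem~\ref{thm:dichotomyExpansionsQ}.

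The main obstacle is precisely this pp-definability statement, which is the analogue --- for countably infinite $\omega$-categorical structures $\fA_1$ with no algebraicity, not preserved by all permutations, with cross prevention, and without a binary injective polymorphism --- of the chain Theorem~\ref{thm:end}, Theorem~\ref{thm:bod-kara}, Theorem~\ref{thm:BinInjOrJ}. All of those rest on the classification of the locally closed clones over $\Q$ containing $\Aut(\Q;<)$, and there is no comparable classification for general $\omega$-categorical structures. A realistic route is to impose the extra hypothesis --- satisfied by every example listed after Theorem~\ref{thm:JPreventCrossesHard} --- that $\fA_1$ is a reduct of a finitely bounded homogeneous structure, or at least lies in a Ramsey class, and then to analyse the canonical polymorphisms of $\fA_1$ via Ramsey theory and the smooth-approximation machinery: a structure all of whose canonical binary operations collapse in the manner of $\min$, $\mi$, $\mx$ (rather than being essentially injective like $\lex$ or $\ll$) should be forced either to pp-define $\J$ or its dual, or to pp-define a $\Betw$-, $\Cycl$-, or $\Sep$-type relation and hence to have an NP-hard CSP already, the latter being subsumed by the first case of the hardness argument. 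Making this unconditional --- in particular handling structures not known to admit a Ramsey expansion --- is what keeps the conjecture open; under the usual finite-boundedness assumptions the combined CSP is moreover in NP, so the resulting dichotomy would be between P and NP-complete.
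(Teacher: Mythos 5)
This statement is the paper's closing \emph{conjecture}; the authors explicitly state that it ``remains open in general'' and give no proof, so there is nothing in the paper to match your argument against. Your proposal is an honest sketch of how one might try to transplant the architecture of the proof of Theorem~\ref{thm:dichotomyExpansionsQ}, and to your credit you flag the two places where it breaks; but both are genuine gaps, not deferrable technicalities. Your preliminary observation is correct and worth keeping: a constant polymorphism would map a witness of condition~(1) of Definition~\ref{def:preventCrosses} to a constant tuple satisfying $x=y\wedge u=v$, contradicting condition~(3), so cross prevention rules out constant polymorphisms and the second bullet of the conjecture is vacuous under its hypotheses. That, however, does not advance either direction.

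On the tractability side, Theorem~\ref{thm:P} needs an existential positive definition of $\neq$ in both theories, and a binary injective polymorphism only buys independence of $\neq$ (Proposition~\ref{thm:indep}), not ep-definability. Your candidate $\exists x\,\phi(x,x,u,v)$ extracted from a cross prevention formula defines a nonempty, $\Aut(\fA_i)$-invariant subset of $\neq$, but there is no reason a finite disjunction of such formulas exhausts $\neq$; in the temporal case this was rescued by the classification (preservation by $\ll$ or its dual, which also preserves $\neq$ and keeps the CSP tractable after adding $\neq$), and no analogous rescue is available here, so the tractability direction is simply not established. On the hardness side, the step you correctly identify as the main obstacle is worse than missing: your route requires that every $\omega$-categorical $\fA_1$ without a binary injective polymorphism primitively positively defines a copy of $(\Q;<,\J)$ or its dual, which is the content of Theorem~\ref{thm:BinInjOrJ} and is proved there by machinery (syntactic normal forms for $\min$, $\mi$, $\mx$, $\mix$; Theorems~\ref{thm:end} and~\ref{thm:bod-kara}) that lives entirely inside the lattice of clones above $\Aut(\Q;<)$. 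For, say, a first-order expansion of the random graph lacking a binary injective polymorphism there is no linear order to define, so the reduction to Theorem~\ref{thm:JPreventCrossesHard} cannot even be set up and the hardness source would have to be of a different nature. The proposal therefore does not constitute a proof, and its hardness architecture is structurally tied to the temporal setting rather than merely incomplete.
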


%\printbibliography  
 %\clearpage
%\newpage 

\bibliographystyle{alphaurl}
\bibliography{global.bib}

\newcommand{\etalchar}[1]{$^{#1}$}
\def\cprime{$'$} \def\cprime{$'$} \def\cprime{$'$}
\begin{thebibliography}{BGN{\etalchar{+}}06}

\bibitem[BCP10]{BodChenPinsker}
Manuel Bodirsky, Hubie Chen, and Michael Pinsker.
\newblock The reducts of equality up to primitive positive interdefinability.
\newblock {\em Journal of Symbolic Logic}, 75(4):1249--1292, 2010.
\newblock \href {https://doi.org/10.2178/jsl/1286198146}
  {\path{doi:10.2178/jsl/1286198146}}.

\bibitem[BCW14]{ToTheMax}
Manuel Bodirsky, Hubie Chen, and Micha\l\ Wrona.
\newblock Tractability of quantified temporal constraints to the max.
\newblock {\em International Journal of Algebra and Computation},
  24(8):1141--1156, 2014.
\newblock \href {https://doi.org/10.1142/S0218196714500507}
  {\path{doi:10.1142/S0218196714500507}}.

\bibitem[BG20]{BodirskyGreinerCombinations}
Manuel Bodirsky and Johannes Greiner.
\newblock {The Complexity of Combinations of Qualitative Constraint
  Satisfaction Problems}.
\newblock {\em {Logical Methods in Computer Science}}, {Volume 16, Issue 1},
  February 2020.
\newblock URL: \url{https://lmcs.episciences.org/6129}, \href
  {https://doi.org/10.23638/LMCS-16(1:21)2020}
  {\path{doi:10.23638/LMCS-16(1:21)2020}}.

\bibitem[BG21]{BodirskyGreiner_Sampling}
Manuel Bodirsky and Johannes Greiner.
\newblock Tractable combinations of theories via sampling.
\newblock In {\em Logics in Artificial Intelligence - 17th European Conference,
  {JELIA} 2021, Virtual Event, May 17-20, 2021, Proceedings}, pages 133--146,
  2021.
\newblock \href {https://doi.org/10.1007/978-3-030-75775-5\_10}
  {\path{doi:10.1007/978-3-030-75775-5\_10}}.

\bibitem[BGN{\etalchar{+}}06]{BonacinaGhilardiEA06_DecidabilityAndUndecidabilityResultsForNelsonOppenAndRewriteBasedDecisionProcedures}
Maria~Paola Bonacina, Silvio Ghilardi, Enrica Nicolini, Silvio Ranise, and
  Daniele Zucchelli.
\newblock Decidability and undecidability results for nelson-oppen and
  rewrite-based decision procedures.
\newblock In Ulrich Furbach and Natarajan Shankar, editors, {\em Automated
  Reasoning}, pages 513--527, Berlin, Heidelberg, 2006. Springer Berlin
  Heidelberg.

\bibitem[BH12]{BodHils}
Manuel Bodirsky and Martin Hils.
\newblock Tractable set constraints.
\newblock {\em Journal of Artificial Intelligence Research}, 45:731--759, 2012.
\newblock \href {https://doi.org/10.1613/jair.3747}
  {\path{doi:10.1613/jair.3747}}.

\bibitem[BJP17]{Phylo-Complexity}
Manuel Bodirsky, Peter Jonsson, and Trung~Van Pham.
\newblock {The Complexity of Phylogeny Constraint Satisfaction Problems}.
\newblock {\em ACM Transactions on Computational Logic (TOCL)}, 18(3), 2017.
\newblock An extended abstract appeared in the conference STACS 2016.
\newblock \href {https://doi.org/10.4230/LIPIcs.STACS.2016.20}
  {\path{doi:10.4230/LIPIcs.STACS.2016.20}}.

\bibitem[BJR02]{BroxvallJonssonRenz}
Mathias Broxvall, Peter Jonsson, and Jochen Renz.
\newblock Disjunctions, independence, refinements.
\newblock {\em Artificial Intelligence}, 140(1/2):153--173, 2002.

\bibitem[BK08]{ecsps}
Manuel Bodirsky and Jan K\'ara.
\newblock The complexity of equality constraint languages.
\newblock {\em Theory of Computing Systems}, 3(2):136--158, 2008.
\newblock A conference version appeared in the proceedings of Computer Science
  Russia {(CSR'06)}.
\newblock \href {https://doi.org/10.1007/s00224-007-9083-9}
  {\path{doi:10.1007/s00224-007-9083-9}}.

\bibitem[BK09]{tcsps-journal}
Manuel Bodirsky and Jan K\'ara.
\newblock The complexity of temporal constraint satisfaction problems.
\newblock {\em Journal of the ACM}, 57(2):1--41, 2009.
\newblock An extended abstract appeared in the Proceedings of the Symposium on
  Theory of Computing (STOC).
\newblock \href {https://doi.org/10.1145/1667053.1667058}
  {\path{doi:10.1145/1667053.1667058}}.

\bibitem[BMM21]{MMSNP-Journal}
Manuel Bodirsky, Florent~R. Madelaine, and Antoine Mottet.
\newblock A proof of the algebraic tractability conjecture for monotone monadic
  {SNP}.
\newblock {\em {SIAM} J. Comput.}, 50(4):1359--1409, 2021.
\newblock \href {https://doi.org/10.1137/19M128466X}
  {\path{doi:10.1137/19M128466X}}.

\bibitem[BMPP19]{BMPP16}
Manuel Bodirsky, Barnaby Martin, Michael Pinsker, and Andr{\'{a}}s
  Pongr{\'{a}}cz.
\newblock Constraint satisfaction problems for reducts of homogeneous graphs.
\newblock {\em SIAM Journal on Computing}, 48(4):1224--1264, 2019.
\newblock A conference version appeared in the Proceedings of the 43rd
  International Colloquium on Automata, Languages, and Programming, {ICALP}
  2016, pages 119:1-119:14.
\newblock \href {https://doi.org/10.1137/16M1082974}
  {\path{doi:10.1137/16M1082974}}.

\bibitem[BN06]{BodirskyNesetrilJLC}
Manuel Bodirsky and Jaroslav Ne\v{s}et\v{r}il.
\newblock Constraint satisfaction with countable homogeneous templates.
\newblock {\em Journal of Logic and Computation}, 16(3):359--373, 2006.
\newblock \href {https://doi.org/10.1093/logcom/exi083}
  {\path{doi:10.1093/logcom/exi083}}.

\bibitem[Bod12]{Bodirsky-HDR-v8}
Manuel Bodirsky.
\newblock Complexity classification in infinite-domain constraint satisfaction.
\newblock M\'emoire d'habilitation \`a diriger des recherches, Universit\'{e}
  Diderot -- Paris 7. Available at arXiv:1201.0856v8, 2012.

\bibitem[Bod21]{Book}
Manuel Bodirsky.
\newblock {\em Complexity of Infinite-Domain Constraint Satisfaction}.
\newblock Lecture Notes in Logic (52). Cambridge University Press, 2021.
\newblock \href {https://doi.org/10.1017/9781107337534}
  {\path{doi:10.1017/9781107337534}}.

\bibitem[BP15]{BodPin-Schaefer-both}
Manuel Bodirsky and Michael Pinsker.
\newblock Schaefer's theorem for graphs.
\newblock {\em Journal of the ACM}, 62(3):52 pages (article number 19), 2015.
\newblock A conference version appeared in the Proceedings of STOC 2011, pages
  655-664.
\newblock \href {https://doi.org/10.1145/2764899} {\path{doi:10.1145/2764899}}.

\bibitem[BPP15]{42}
Manuel Bodirsky, Michael Pinsker, and Andr\'{a}s Pongr\'acz.
\newblock The 42 reducts of the random ordered graph.
\newblock {\em Proceedings of the LMS}, 111(3):591--632, 2015.
\newblock Preprint available from arXiv:1309.2165.
\newblock \href {https://doi.org/10.1112/plms/pdv037}
  {\path{doi:10.1112/plms/pdv037}}.

\bibitem[BPR20]{RydvalFP}
Manuel Bodirsky, Wied Pakusa, and Jakub Rydval.
\newblock Temporal constraint satisfaction problems in fixed-point logic.
\newblock In Holger Hermanns, Lijun Zhang, Naoki Kobayashi, and Dale Miller,
  editors, {\em {LICS} '20: 35th Annual {ACM/IEEE} Symposium on Logic in
  Computer Science, Saarbr{\"{u}}cken, Germany, July 8-11, 2020}, pages
  237--251. {ACM}, 2020.
\newblock \href {https://doi.org/10.1145/3373718.3394750}
  {\path{doi:10.1145/3373718.3394750}}.

\bibitem[BS01]{BaaderSchulz}
Franz Baader and Klaus~U. Schulz.
\newblock Combining constraint solving.
\newblock {\em H. Comon, C. March, and R. Treinen, editors, Constraints in
  Computational Logics}, 2001.

\bibitem[BW12]{equiv-csps}
Manuel Bodirsky and Micha\l\ Wrona.
\newblock Equivalence constraint satisfaction problems.
\newblock In {\em Proceedings of Computer Science Logic}, volume~16 of {\em
  LIPICS}, pages 122--136. Dagstuhl Publishing, September 2012.
\newblock \href {https://doi.org/10.4230/LIPIcs.CSL.2012.122}
  {\path{doi:10.4230/LIPIcs.CSL.2012.122}}.

\bibitem[Cam90]{Oligo}
Peter~J. Cameron.
\newblock {\em Oligomorphic permutation groups}.
\newblock Cambridge University Press, Cambridge, 1990.

\bibitem[Ghi04]{Ghilardi05_ModelTheoreticMethodsInCombinedConstraintSatisfiability}
Silvio Ghilardi.
\newblock Model-theoretic methods in combined constraint satisfiability.
\newblock {\em Journal of Automated Reasoning}, 33(3):221--249, 2004.
\newblock \href {https://doi.org/10.1007/s10817-004-6241-5}
  {\path{doi:10.1007/s10817-004-6241-5}}.

\bibitem[GM77]{GalilMegiddo}
Zvi Galil and Nimrod Megiddo.
\newblock Cyclic ordering is {NP}-complete.
\newblock {\em Theoretical Computer Science}, 5(2):179--182, 1977.

\bibitem[Hod97]{Hodges}
Wilfrid Hodges.
\newblock {\em A shorter model theory}.
\newblock Cambridge University Press, Cambridge, 1997.

\bibitem[KG07]{KrsticGoel07_ArchitectingSolversForSatModuloTheoriesNelsonOppenWithDpll}
Sava Krsti\'{c} and Amit Goel.
\newblock Architecting solvers for sat modulo theories: Nelson-oppen with dpll.
\newblock In {\em Proceedings of the 6th International Symposium on Frontiers
  of Combining Systems}, FroCoS '07, pages 1--27, Berlin, Heidelberg, 2007.
  Springer-Verlag.
\newblock \href {https://doi.org/10.1007/978-3-540-74621-8_1}
  {\path{doi:10.1007/978-3-540-74621-8_1}}.

\bibitem[KP18]{posetCSP18}
Michael Kompatscher and Trung~Van Pham.
\newblock A complexity dichotomy for poset constraint satisfaction.
\newblock {\em IfCoLog Journal of Logics and their Applications ({FLAP})},
  5(8):1663--1696, 2018.

\bibitem[KPT05]{Topo-Dynamics}
Alexander Kechris, Vladimir Pestov, and Stevo Todor\v{c}evi\'c.
\newblock Fra\"{i}ss\'e limits, {R}amsey theory, and topological dynamics of
  automorphism groups.
\newblock {\em Geometric and Functional Analysis}, 15(1):106--189, 2005.

\bibitem[LP15]{LinmanPinsker}
Julie Linman and Michael Pinsker.
\newblock Permutations on the random permutation.
\newblock {\em Electronic Journal of Combinatorics}, 22(2):1--22, 2015.

\bibitem[MSS04]{and-or-scheduling}
Rolf~H. M\"ohring, Martin Skutella, and Frederik Stork.
\newblock Scheduling with and/or precedence constraints.
\newblock {\em SIAM Journal on Computing}, 33(2):393--415, 2004.

\bibitem[NB95]{Nebel}
Bernhard Nebel and Hans-J\"urgen B\"urckert.
\newblock Reasoning about temporal relations: A maximal tractable subclass of
  {A}llen's interval algebra.
\newblock {\em Journal of the ACM}, 42(1):43--66, 1995.

\bibitem[NO79]{NelsonOppen79}
Greg Nelson and Derek~C. Oppen.
\newblock Simplification by cooperating decision procedures.
\newblock {\em ACM Transactions on Programming Languages and Systems},
  1(2):245--257, 10 1979.
\newblock \href {https://doi.org/10.1145/357073.357079}
  {\path{doi:10.1145/357073.357079}}.

\bibitem[Opa79]{Opatrny}
Jaroslav Opatrny.
\newblock Total ordering problem.
\newblock {\em SIAM Journal on Computing}, 8(1):111--114, 1979.

\bibitem[Opp80]{Oppen80_ComplexityConvexityAndCombinationsOfTheories}
Derek~C. Oppen.
\newblock Complexity, convexity and combinations of theories.
\newblock {\em Theoretical Computer Science}, 12(3):291 -- 302, 1980.
\newblock \href {https://doi.org/10.1016/0304-3975(80)90059-6}
  {\path{doi:10.1016/0304-3975(80)90059-6}}.

\bibitem[Pos41]{Post}
Emil~L. Post.
\newblock {\em The two-valued iterative systems of mathematical logic},
  volume~5.
\newblock Princeton University Press, Princeton, 1941.

\bibitem[Rin96]{Ringeissen96_CooperationOfDecisionProceduresForTheSatisfiabilityProblem}
Christophe Ringeissen.
\newblock {\em Cooperation of Decision Procedures for the Satisfiability
  Problem}, pages 121--139.
\newblock Springer Netherlands, Dordrecht, 1996.
\newblock \href {https://doi.org/10.1007/978-94-009-0349-4_6}
  {\path{doi:10.1007/978-94-009-0349-4_6}}.

\bibitem[Sch00]{Schulz00_WhyCombinedDecisionProblemsAreOftenIntractable}
Klaus~U. Schulz.
\newblock Why combined decision problems are often intractable.
\newblock In {\em Frontiers of Combining Systems}, pages 217--244, Berlin,
  Heidelberg, 2000. Springer Berlin Heidelberg.

\bibitem[TR03]{TinelliRingeissen03_UnionsOfNonDisjointTheoriesAndCombinationsOfSatisfiabilityProcedures}
Cesare Tinelli and Christophe Ringeissen.
\newblock Unions of non-disjoint theories and combinations of satisfiability
  procedures.
\newblock {\em Theoretical Computer Science}, 290(1):291 -- 353, 2003.
\newblock \href {https://doi.org/10.1016/S0304-3975(01)00332-2}
  {\path{doi:10.1016/S0304-3975(01)00332-2}}.

\bibitem[VKvB89]{PointAlgebra}
Marc Vilain, Henry Kautz, and Peter van Beek.
\newblock Constraint propagation algorithms for temporal reasoning: A revised
  report.
\newblock {\em Reading in Qualitative Reasoning about Physical Systems}, pages
  373--381, 1989.

\end{thebibliography}

 \end{document}